 \patchcmd\Gread@eps{\@inputcheck#1 }{\@inputcheck"#1"\relax}{}{}
\newcommand\blfootnote[1]{%
  \begingroup
  \renewcommand\thefootnote{}\footnote{#1}%
  \addtocounter{footnote}{-1}%
  \endgroup
}            
\newtheorem{theorem}{Theorem}[section]
\newtheorem{definition}[theorem]{Definition}
\newtheorem{corollary}[theorem]{Corollary}
\newtheorem{lemma}[theorem]{Lemma}
\newtheorem{proposition}[theorem]{Proposition}
\newtheorem{remark}[theorem]{Remark}
\newtheorem{observation}[theorem]{Observation}
\def\+{\oplus}
\def\F{{\mathbb F}}
\def\Z{{\mathbb Z}}
\def\Z{{\mathbb Z}}
\def\wt{{\rm wt}}
\def\00{{\bf 0}}
\def\11{{\bf 1}}
\def\+{\oplus}
\def\Mod{{\ \rm Mod\ }}
\def\\{\cr}
\def\({\left(}
\def\){\right)}
\def\wt{{\rm wt}}
\providecommand{\newoperator}[3]{%
  \newcommand*{#1}{\mathop{#2}#3}}
\newoperator{\FD}{\mathrm{FD}}{\nolimits}
\newcommand{\ps}{\mathcal{P}}
\begin{document}

\title{\bf An infinite family of 0-APN monomials with two parameters}
\author{Nikolay Kaleyski$^1$, Kjetil Nesheim$^1$, Pantelimon St\u anic\u a$^2$\\
\small $^1$ Department of Informatics, University of Bergen,\\
\small 5020, Bergen, Norway;\\
\small  {\tt Nikolay.Kaleyski@uib.no,kjetil.nesheim@protonmail.com}\\
\small $^2$ Department of Applied Mathematics, Naval Postgraduate School\\
\small Monterey, CA 93943-5212, U.S.A.; {\tt pstanica@nps.edu}
}
\date{}

\maketitle
\thispagestyle{empty}

\abstract{We consider an infinite family of exponents $e(l,k)$ with two parameters, $l$ and $k$, and derive sufficient conditions for $e(l,k)$ to be 0-APN over $\F_{2^n}$. These conditions allow us to generate, for each choice of $l$ and $k$, an infinite list of dimensions $n$ where $x^{e(l,k)}$ is 0-APN much more efficiently than in general. We observe that the Gold and Inverse exponents, as well as the inverses of the Gold exponents can be expressed in the form $e(l,k)$ for suitable $l$ and $k$. We characterize all cases in which $e(l,k)$ can be cyclotomic equivalent to a representative from the Gold, Kasami, Welch, Niho, and Inverse families of exponents. We characterize when $e(l,k)$ can lie in the same cyclotomic coset as the Dobbertin exponent (without considering inverses) and provide computational data showing that the Dobbertin inverse is never equivalent to $e(l,k)$. We computationally test the APN-ness of $e(l,k)$ for small values of $l$ and $k$ over $\F_{2^n}$ for $n \le 100$, and sketch the limits to which such tests can be performed using currently available technology. We conclude that there are no APN monomials among the tested functions, outside of the known classes.}

\section{Introduction}
\label{secIntroduction}

\blfootnote{Some of the results in this paper were partially presented at Boolean Functions and Their Applications (BFA) 2022. In particular, all results from Section 4 onwards are completely new.}

We consider  vectorial Boolean functions, i.e. mappings over the vector space $\F_2^n$ or, equivalently, the finite field $\F_{2^n}$, where $n$ is some positive integer. The differential uniformity is one of the most important properties of vectorial Boolean functions from a cryptographic point of view since it measures their resistance to attacks such as differential cryptanalysis. More precisely, the differential uniformity $\Delta_F$ of a vectorial Boolean function $F$ is desired to be as low as possible. It is simple to see that $\Delta_F \ge 2$ for any vectorial Boolean function $F$. The best possible functions are thus the ones with $\Delta_F = 2$, which are called almost perfect nonlinear (APN). These functions are of interest since they also correspond to optimal objects and constructions in other fields of mathematics and computer science, including combinatorics, algebra, and coding theory. For instance, APN functions can be related to linear codes with prescribed parameters \cite{carlet1998codes}. The study of APN and PN functions, including finding new instances of such functions and investigating their properties, is thus interesting and relevant from multiple points of view.

Unfortunately, APN functions are generally difficult to find and analyze. To date, we know many instances of APN functions (see e.g.~\cite{yu2014matrix},~\cite{beierle2020new}) but very little can be said about their structure in general. We refer the reader to~\cite{carlet2021boolean,CS17} for a detailed overview of cryptographic Boolean functions.

Some of the oldest known instances of APN functions are monomials, or power functions, i.e. functions that can be expressed as polynomials of the form $F(x) = x^d$ over $\F_{2^n}$ for some positive integer $d$. Due to their relatively simple structure, these are some of the most studied and best understood vectorial Boolean functions, although the area is still riddled with open questions and unsolved problems.

Due to the general difficulty of constructing and analyzing APN functions, weaker notions of APN-ness have been introduced, such as that of partial APN-ness (pAPN-ness)~\cite{budaghyan2019partially} which we focus on in this paper. This means that any APN function is pAPN, but not necessarily vice-versa; and so this weaker notion can be used as a ``stepping stone'' in formulating constructions of APN functions and analyzing their properties. In particular, one of the motivations behind the notion of partial APN-ness is the possibility of learning more about the structure of APN permutations. We note that the existence of APN permutations over $\F_{2^n}$ with even $n$ (typically referred to as the ``big APN problem'') is one of the oldest and most important questions in the field of cryptographic Boolean functions; and that, while APN permutations over $\F_{2^n}$ with odd $n$ are known (for instance, all monomial APN functions are of this form), we still know very few examples and constructions of such functions.

In addition to the ``big APN problem'', one of the most important open questions in the area is the existence of APN monomials inequivalent to the six known families~\cite{dobbertin2001almostNewCase}. In the aforementioned paper, it is conjectured that these six known families exhaust all possible cases up to equivalence. According to~\cite{carlet2021boolean}, this conjecture has been computationally verified over $\F_{2^n}$ for all $n$ up to $34$, and also up to $42$ in the case of even $n$. Computationally searching for new APN monomials becomes very difficult for large values of $n$, since not only does the verification of the APN property require more effort, but the number of exponents that need to be checked grows exponentially with $n$. Finding constructions of $0$-APN monomials can thus also be useful for approaching this conjecture, since instead of examining all exponents over $\F_{2^n}$, only a smaller (and more promising set) has to be considered.

While certainly more tractable than APN functions, the behavior of 0-APN monomials is far from trivial, too. In the case of monomials, it is known that $x^d$ is 0-APN over $\F_{2^n}$ for infinitely many dimensions $n$, and that the set of dimensions can be characterized by computing the factorization of the polynomial $x^d + (x+1)^d + 1$ over $\F_2[x]$ \cite{budaghyan2020partially}. This is, however, difficult to do theoretically, and computationally, it is only feasible for relatively small values of $d$; for instance, the \textit{Magma} algebra system \cite{bosma1997magma} that we use for most of our computations struggles with computing such a factorization already for $d \ge 10^7$. While this number may seem large, we recall that if APN exponents distinct from representatives of the known families exist, they must be over finite fields $\F_{2^n}$ with $n \ge 35$, which involves exploring exponents much larger than this.

In this paper, we define an infinite family of exponents $e(l,k)  = \sum_{j = 0}^{l-1} 2^{jk}$ with two parameters $l$ and $k$ (that can take any positive integers as values), and give sufficient conditions that $n$ has to satisfy in order for $x^{e(l,k)}$ to be 0-APN over $\F_{2^n}$. For every choice of $l$ and $k$, our conditions produce infinitely many dimensions $n$ for which the exponent is 0-APN. Furthermore, we discuss how, with the help of some very simple computations, we can characterize the set of all dimensions $n$ for which $x^{e(l,k)}$ is 0-APN. Generating dimensions that satisfy our conditions requires minimal computational effort and amounts to computing the greatest common divisors of some integers. This is possible even for very large exponents of the form $e(l,k)$, e.g. a list of 24242 dimensions $n$ between $1$ and $100000$ for which $e(100,100) \approx 10^{2980}$ is 0-APN can be computed in a few seconds on \textit{Magma}. One of the advantages of our construction is that it becomes very easy to find dimensions where $x^{e(l,k)}$ is 0-APN; and, as remarked above, only a bit of additional computation is needed in order to characterize the set of all such dimensions.

Another advantage is that the algebraic degree of $e(l,k)$ is easily predictable, e.g. $\deg(e(l,k)) = l$ when $\gcd(k,n) = 1$ and $l < n$, which allows us to characterize with relative ease when the exponents $e(l,k)$ are cyclotomic equivalent to representatives from the known monomial APN families. We mathematically treat all cases, except that of the Dobbertin inverse, where the computations are too technical: the method that we use in our inequivalence proofs depends on the application of Lemma \ref{lemUniqueExpansion} to a set of integers describing the binary decomposition of the exponent of the monomial. Part of the lemma's hypothesis is that all these integers are distinct modulo $n$, and this requires the degenerate cases when two or more of them are congruent modulo $n$ to be treated separately before the lemma can be applied. Handling these cases is conceptually straightforward, but quite lengthy and technical in practice, especially when the number of cases that need to be considered is large. In the case of the inverse Dobbertin exponent, there is a substantial blowup in the number of cases than need to be handled, which would require a lengthy proof spanning tens of pages or even more. Instead of supplying a heavy and technical proof of this form, we provide computational data for $n \le 200$ showing that $e(l,k)$ can never be equivalent to the inverse of the Dobbertin function except in trivially small dimensions. The range $n \le 200$ covers all dimensions where the problem of finding new APN monomials can be handled in practice using our current knowledge and resources. Furthermore, we note that the ``missing'' case of the Dobbertin inverse only concerns odd dimensions $n$ that are multiples of $5$, while the theoretical characterizations from the remaining theorems give a complete description of the equivalence to the known families for all other values of $n$.

We show that the Gold and Inverse APN functions can always be represented in the form $e(l,k)$ for suitable choices of $l$ and $k$. Moreover, via~\cite{Nyb94}, the inverse of the Gold function $x\mapsto x^{2^r+1}$ over $\F_{2^n}$, $n$ odd, $\gcd(n,r)=1$, is given by $e(2r, \frac{n+1}{2})=\sum_{i=0}^{\frac{n-1}{2}} 2^{2ir}$. Furthermore, we show that representatives from the remaining families are never cyclotomic equivalent to $e(l,k)$ except for small dimensions.

Finally, we consider the exponents $e(l,k)$ for small values of $l$ and $k$, and computationally check for which dimensions $n$ below $100$ they are APN. We do not find any new APN exponents, but we see that the computation load needed to test APN-ness grows very quickly with the size of the exponent $e(l,k)$, even more so than with the dimension $n$. We provide more detailed comments about our computational experiments in Section \ref{secComputations}.

Unfortunately, our current computational methods are insufficient to check APN-ness of the exponents in high dimensions. We leave the computational exploration of the $e(l,k)$ exponents as a problem for future work.

\section{Preliminaries}

Let $n$ be a natural number. We denote by $\F_{2^n}$ the finite field with $2^n$ elements, and by $\F_2^n$ the vector space of dimension $n$ over $\F_2$. The set of non-zero elements of $\F_{2^n}$ is denoted by $\F_{2^n}^*$. A \textbf{vectorial Boolean function}, or $(n,m)$-function, is any mapping from $\F_2^n$ to $\F_2^m$. We concentrate on the case $n = m$. Any $(n,n)$-function can be uniquely represented as a univariate polynomial over $\F_{2^n}$ of the form
\begin{equation*}
    F(x) = \sum_{i = 0}^{2^n-1} a_i x^i,
\end{equation*}
with $a_i \in \F_{2^n}$. This is called the \textbf{univariate representation} of $F : \F_{2^n} \rightarrow \F_{2^n}$. A \textbf{monomial} function, or power function, is any $(n,n)$-function with univariate representation $F(x) = x^d$ for some natural number $d$.
The \textbf{algebraic degree} of $F$, denoted $\deg(F)$, is the largest (Hamming) weight of $i$ (that is, $\wt(i)$, which is the number of nonzero bits in the binary representation of~$i$), where $a_i\neq 0$.

The \textbf{differential uniformity} of $F : \F_{2^n} \rightarrow \F_{2^n}$ is defined as
\begin{equation*}
    \Delta(F) = \max_{a \in \F_{2^n}^*, b \in \F_{2^n}} \# \left\{ x \in \F_{2^n} \mid F(a+x) + F(x) = b \right\},
\end{equation*}
that is, as the largest number of solutions $x$ to $F(a+x) + F(x) = b$ for any choice of $a,b$ with $a \ne 0$. The differential uniformity is a measurement of the resistance provided by the function to differential cryptanalysis~\cite{Biham1991}, and should be as low as possible. The number of solutions $x$ to any equation of the form $F(a+x) + F(x) = b$ is even, and so the optimal value of the differential uniformity is $2$. If $\Delta(F) = 2$, we say that $F$ is \textbf{almost perfect nonlinear (APN)}.

The large number of vectorial Boolean functions makes it necessary to consider e.g. APN functions up to some suitable notion of equivalence in order to reduce the number of instances that have to be treated. Such an equivalence relation should leave the differential uniformity invariant, and should be as general as possible (in the sense that its equivalence classes should be as large as possible) in order to leave a small number of representatives that have to be considered. At present, the most general known equivalence relation used in practice is Carlet-Charpin-Zinoviev (CCZ) equivalence. Two $(n,n)$-functions $F$ and $G$ are said to be \textbf{CCZ-equivalent} if there is an affine permutation $A$ of $\F_{2^n}^2$ mapping the graph $\Gamma_F = \{ (x,F(x)) : x \in \F_{2^n} \}$ of $F$ to the graph $\Gamma_G$ of $G$. CCZ-equivalence is, in general, difficult to test computationally.

In the particular case of monomial functions, however, CCZ-equivalence reduces to a much simpler notion of equivalence. We say that $F, G : \F_{2^n} \rightarrow \F_{2^n}$ with $F(x) = x^d$ and $G(x) = x^e$ are \textbf{cyclotomic equivalent} if there exists a natural number $a$ such that either $2^a d$ is in the cyclotomic coset of $e$ modulo $2^n-1$, i.e.
\begin{equation*}
    2^a \cdot d \equiv e \pmod{2^n-1},
\end{equation*}
or the inverse of $d$ modulo $2^n-1$ is in the cyclotomic coset of $e$, i.e.
\begin{equation*}
    2^a \cdot d^{-1} \equiv e \pmod{2^n-1},
\end{equation*}
provided of course that $\gcd(d, 2^n-1) = 1$ so that the inverse $d^{-1}$ exists. We know that two monomials are CCZ-equivalent if and only if they are cyclotomic equivalent~\cite{dempwolff2018ccz,yoshiara2012equivalences}. Testing cyclotomic equivalence, in contrast to the more general CCZ-equivalence, is quite simple, and amounts to checking whether a small number of modular equations hold.

For natural numbers $n,k$, we will denote by $n \Mod k$ the least positive residue of $n$ modulo $k$, e.g. $11 \Mod 3 = 2$.

At present, we know of six infinite families of APN monomials; these are summarized in Table~\ref{tableMonomials}. In~\cite{dobbertin2001almostNewCase}, it is conjectured that no other APN monomials exist over $\F_{2^n}$ up to cyclotomic equivalence. In~\cite{carlet2021boolean}, it is reported that this has been computationally verified for $n \le 34$, and for $n \le 42$ in the case of even $n$. Despite this, the question of whether the list in Table~\ref{tableMonomials} is exhaustive up to cyclotomic equivalence remains open, and is one of the oldest and hardest unresolved questions in the area of APN functions. It is sometimes referred to as \textbf{Dobbertin's conjecture}.
\begin{table}
\small
  \centering
  \caption{Known infinite families of APN power functions over $\mathbb{F}_{2^n}$}
  \begin{tabular}{|c|c|c|c|c|}
    \hline
    Family & Exponent & Conditions & Algebraic degree & Source \\
    \hline
    \hline
    Gold & $2^i + 1$ & $\gcd(i,n) = 1$ & $2$ & \cite{gold1968maximal,Nyb94} \\
    \hline
    Kasami & $2^{2i} - 2^i + 1$ & $\gcd(i,n) = 1$ & $i + 1$ & \cite{janwa1993hyperplane, kasami1971weight} \\
    \hline
    Welch & $2^t + 3$ & $n = 2t + 1$ & $3$ & \cite{dobbertin1999almostWelch} \\
    \hline
    \multirow{2}{*}{Niho} & $2^t + 2^{t/2} - 1$, $t$ even & \multirow{2}{*}{$n = 2t + 1$} & $(t + 2)/2$ & \multirow{2}{*}{\cite{dobbertin1999almostNiho}} \\
         & $2^t + 2^{(3t + 1)/2} - 1$, $t$ odd &             & $ t + 1$ & \\
    \hline
    Inverse & $2^{2t} - 1$ & $n = 2t + 1$ & $n - 1$ & \cite{beth1993almost,Nyb94} \\
    \hline
    Dobbertin & $2^{4i} + 2^{3i} + 2^{2i} + 2^i - 1$ & $n = 5i$ & $i + 3$ & \cite{dobbertin2001almostNewCase} \\
    \hline
  \end{tabular}
  \label{tableMonomials}
\end{table}

Finding new instances of APN functions is challenging, especially when APN-ness is combined with other desirable properties, e.g. being bijective or being a monomial function. For this reason, various weaker notions of APN-ness have been defined in the literature (see~\cite{budaghyan2020partially,CK17}, for example). Following~\cite{budaghyan2020partially}, we say that a function $F : \F_{2^n} \rightarrow \F_{2^n}$ is $\mathbf{x_0}$\textbf{-APN} for some $x_0 \in \F_{2^n}$ if any $y,z \in \F_{2^n}$ satisfying
\begin{equation*}
    F(x_0) + F(y) + F(z) + F(x_0 + y + z) = 0
\end{equation*}
necessarily satisfy $(x_0 + y)(x_0 + z)(y + z) = 0$.
It is straightforward to verify that a function is APN if and only if it is $x_0$-APN for all $x_0 \in \F_{2^n}$.

In the case of monomials, it is shown~\cite{budaghyan2019partially} that if a monomial is $x_0$-APN for some $x_0 \ne 0$, then it is also $x_1$-APN for any $x_1 \ne 0$, and that $1$-APN-ness implies $0$-APN-ness for monomials. This means that a monomial can be either: APN; 0-APN but not 1-APN; not 0-APN. In this sense, 0-APN-ness is a natural intermediate step towards constructions of APN monomials.

One potential strategy for approaching Dobbertin's conjecture is to describe constructions of 0-APN monomials over fields $\F_{2^n}$ of high dimension $n$. As outlined in the introduction, in this work we introduce an infinite family of exponents which are particularly tractable from the point of view of 0-APN-ness and cyclotomic equivalence to the known families.

\section{An infinite family of 0-APN exponents with two parameters}

In this section, we introduce the exponents $e(l,k)$ and provide sufficient conditions on $n$ in order for $x^{e(l,k)}$ to be $0$-APN over $\F_{2^n}$. We recall that any monomial $x^d$ is 0-APN over infinitely many dimensions $n$, but in general it can be difficult to characterize these dimensions $n$ without doing computations such as factorizing the polynomial $x^d + (x+1)^d + 1$ over $\F_2$ (see \cite{budaghyan2018partially}), which can be a computationally hard task for large values of $d$. The class of exponents $e(l,k)$ has the advantage of being significantly more tractable in this sense. As outlined in the introduction, we are able to find $24242$ dimensions $n$ for which $e(100,100) \approx 10^{2980}$ is $0$-APN in a few seconds on \textit{Magma}, while factorizing $x^d + (x+1)^d + 1$ is computationally infeasible already for $d \ge 10^7$.

The sufficient conditions can be formulated in two ways. In the proof of Theorem~\ref{thmMainTheorem}, we show that if the expression $F(x) + F(1+x) + F(1)$ for $F(x) = x^{e(l,k)}$ vanishes for some $x \in \F_{2^n}$, then $x$ is in $\F_{2^{\gcd(lk,n)}}$, or it satisfies $(x/(x+1))^{e(l-1,k)} = 1$. Consequently, if $\gcd(lk,n) = 1$ and $\gcd(e(l-1,k),2^n-1) = 1$, then both of these cases imply that $x$ is a trivial solution, i.e. $x \in \F_2$.

The second condition, viz. $\gcd(e(l-1,k),2^n-1) = 1$, can be replaced by requiring that $\gcd(jk,n) = 1$ for all $j \in \{ 2, \ldots, l \}$ as explained in the remark following the theorem. This ``cascading'' condition is less general in the sense that it is not satisfied by all dimensions $n$ for which $x^{e(l,k)}$ is 0-APN according to Theorem~\ref{thmMainTheorem}. Nonetheless, it is somewhat simpler to evaluate and allows us to easily construct an infinite sequence of dimensions $n$ over which $x^{e(l,k)}$ is 0-APN even more easily.

\begin{definition}
    Let $l,k$ be natural numbers. We define the exponent $e(l,k)$ as
    \begin{equation*}
        e(l,k) = \sum_{j = 0}^{l-1} 2^{jk}.
    \end{equation*}
\end{definition}

The exponent $e(l,k)$ can also be expressed as
\begin{equation*}
    e(l,k) = \frac{2^{lk} - 1}{2^k - 1}
\end{equation*}
from the formula for the sum of a geometric progression.

\begin{theorem}
\label{thmMainTheorem}
  Let $n,l,k$ be natural numbers such that $\gcd(kl,n) = 1$ and $\gcd(e(l-1,k),2^n-1) = 1$. Then $x^{e(l,k)}$ is $0$-APN over $\F_{2^n}$.
\end{theorem}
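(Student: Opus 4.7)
The plan is to reduce the $0$-APN condition on $F(x)=x^d$, with $d = e(l,k)$, to a statement about the roots of a single univariate polynomial. The condition $F(0) + F(y) + F(z) + F(y+z) = 0$ becomes $y^d + z^d + (y+z)^d = 0$, and the required conclusion $yz(y+z) = 0$ is immediate if $y = 0$ or $z = 0$. Assuming $y,z$ are both nonzero, I divide by $y^d$ and set $x = z/y$; the equation reduces to
$$g_l(x) := x^d + (x+1)^d + 1 = 0,$$
and it suffices to show that the only roots in $\F_{2^n}$ are $x \in \{0, 1\}$, since $x = 1$ yields $y = z$ and thus $y+z = 0$.

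The heart of the argument is the following algebraic identity: if $g_l(x) = 0$ with $x \notin \{0,1\}$, then
$$(x^d + x)\bigl(x^{2^{lk}} + x\bigr) = 0.$$
I would derive this by rewriting $g_l(x) = 0$ as $(x+1)^d = x^d + 1$ and applying the Frobenius map $(\cdot)^{2^k}$ to both sides; in characteristic $2$ this yields $(x+1)^{d \cdot 2^k} = x^{d \cdot 2^k} + 1$. Using the key identity $d \cdot 2^k = d + 2^{lk} - 1$ (which follows from $d(2^k-1) = 2^{lk}-1$), I split each side as a product, substitute $(x+1)^d = x^d + 1$ back into the left, and invoke $(x+1)^{2^{lk}-1} = (x^{2^{lk}}+1)/(x+1)$ and $x^{2^{lk}-1} = x^{2^{lk}}/x$, both valid because $x \ne 0, 1$. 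Multiplying through by $x(x+1)$, expanding, and cancelling the common term $x^{d+1} \cdot x^{2^{lk}}$ and a summand $x$ from both sides leaves $x^{d+1} + x^{2^{lk}+1} + x^d \cdot x^{2^{lk}} + x^2 = 0$, which groups as the displayed factorization.

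With the factorization in hand, the two hypotheses finish the argument. If $x^{2^{lk}} = x$ with $x \ne 0$, the multiplicative order of $x$ divides $\gcd(2^{lk} - 1, 2^n - 1) = 2^{\gcd(lk,n)} - 1$, and the assumption $\gcd(lk, n) = 1$ makes this equal to $1$, forcing $x = 1$, contradicting $x \notin \{0, 1\}$. If instead $x^d = x$ with $x \ne 0$, then $x^{d-1} = 1$; since $d - 1 = 2^k \cdot e(l-1,k)$ and the Frobenius is injective on $\F_{2^n}$, this is equivalent to $x^{e(l-1,k)} = 1$, and the hypothesis $\gcd(e(l-1,k), 2^n - 1) = 1$ then forces $x = 1$, again a contradiction. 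The only place requiring careful bookkeeping is the derivation of the factorization; everything else is a direct application of the hypotheses.
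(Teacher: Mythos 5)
Your proof is correct, and it takes a genuinely different route from the paper's. The paper expands $(x+1)^{e(l,k)}$ as a sum of terms $\prod_{i\in I}x^{2^{ki}}$ over subsets $I\subseteq\{0,\dots,l-1\}$, raises the resulting equation to the $2^k$-th power, and adds the two copies; a combinatorial cancellation of all subsets containing neither $0$ nor $l$ yields the factorization $\bigl(x+x^{2^{lk}}\bigr)\cdot\bigl(x^{e(l-1,k)}+(x+1)^{e(l-1,k)}\bigr)^{2^k}=0$, whose second factor is eliminated via $(x/(x+1))^{e(l-1,k)}=1$ and the hypothesis $\gcd(e(l-1,k),2^n-1)=1$. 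You instead exploit the closed form $d=(2^{lk}-1)/(2^k-1)$, i.e. $d\cdot 2^k=d+2^{lk}-1$, to turn the Frobenius twist into a multiplicative substitution; I checked the algebra, and your identity $x^{d+1}+x^{2^{lk}+1}+x^{d+2^{lk}}+x^2=0$ does follow and does factor as $(x^d+x)(x^{2^{lk}}+x)=0$, with the second factor handled through $d-1=2^k e(l-1,k)$ (also correct) and the same gcd hypothesis, now in the form $x^{e(l-1,k)}=1$ rather than $(x/(x+1))^{e(l-1,k)}=1$. Your version avoids the power-set bookkeeping entirely and is arguably cleaner and more self-contained; what the paper's additive cancellation buys is the intermediate equation $x^{e(l-1,k)}+(x+1)^{e(l-1,k)}=0$, which is exactly the same equation one level down and therefore sets up the ``cascading'' induction recorded in the remark and in Corollary~\ref{corCascade} --- your factorization does not directly yield that recursive structure, though it fully suffices for the theorem as stated.
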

  \begin{proof}
    Denote $e = e(l,k)$. Suppose that $x \in \F_{2^n}$ satisfies $x^e + (x+1)^e + 1 = 0$. For natural numbers $a \le b$, let $[a,b] = \{ a,a+1,\dots,b \}$, and let $\ps{I}$ denote the power set of a discrete set $I$. Furthermore, let $x^{2^{kI}}$ denote $\prod_{i \in I} x^{2^{ki}}$. Then $x^e + (x+1)^e + 1 = 0$ can be written as
    \begin{equation}
      \label{eqDerivative}
    x^e + \sum_{I \in \ps{[0,l-1]}} x^{2^{kI}} + 1 = \sum_{\substack{I \in \ps{[0,l-1]} \\ I \ne \emptyset, [0,l-1]}} x^{2^{kI}} = 0.
    \end{equation}
    Raising this to the power $2^k$ yields
    \begin{equation*}
      \sum_{\substack{I \in \ps{[1,l]} \\ I \ne \emptyset, [1,l]}} x^{2^{kI}} = 0.
    \end{equation*}
    Summing the two expressions causes all terms $x^{2^{kI}}$ corresponding to subsets $I$ that contain neither $0$ nor $l$ to cancel out, leaving us with
    \begin{equation*}
      \sum_{\substack{I \in \{ x \cup \{0\} \mid x \in \ps{[1,l-1]}\} \\ I \ne [1,l-1]}} x^{2^{kI}} + \sum_{\substack{I \in \{x \cup \{l\} \mid x \in \ps[1, l-1]\} \\ I \ne [1,l-1]}} x^{2^{kI}} = 0.
    \end{equation*}
    This then becomes
    \begin{equation*}
      x \left( \sum_{\substack{I \in \ps{[1,l-1]} \\ I \ne [1,l-1]}} x^{2^{kI}} \right) + x^{2^{lk}} \left( \sum_{\substack{I \in \ps{[1,l-1]} \\ I \ne [1,l-1]}} x^{2^{kI}} \right) = \left(x + x^{2^{lk}} \right) \left( \sum_{\substack{I \in \ps{[1,l-1]} \\ I \ne [1,l-1]}} x^{2^{kI}} \right) = 0.
    \end{equation*}
    If $x + x^{2^{lk}} = 0$, then we must have $x \in \F_{2^{\gcd(n,lk)}}$. However, by assumption, $\gcd(n,lk) = 1$, and so $x \in \F_2$. If $x \ne x^{2^{lk}}$, then we must have
      \begin{equation*}
	\left( \sum_{\substack{I \in \ps{[1,l-1]} \\ I \ne [1,l-1]}} x^{2^{kI}} \right) = \left( \sum_{\substack{I \in \ps{[0,l-2]} \\ I \ne [0,l-2]}} x^{2^{kI}} \right)^{2^k} = 0,
      \end{equation*}
      instead. Comparing this with \eqref{eqDerivative}, we see that this is simply
      \begin{equation*}
	(x^{e(l-1,k)} + (x+1)^{e(l-1,k)})^{2^k} = 0,
      \end{equation*}
      and hence
      \begin{equation}
	\label{eqInductionStep}
	x^{e(l-1,k)} + (x+1)^{e(l-1,k)} = 0.
      \end{equation}
      Clearly $x \ne 0, 1$, and so the above implies $(\frac{x}{x+1})^{e(l-1,k)} = 1$. If the second condition of the hypothesis is satisfied, i.e. $\gcd(e(l-1,k),2^n-1) = 1$, then we immediately have $\frac{x}{x+1} = 1$, i.e. $x = x + 1$, which is impossible. Therefore, $x^{e(l,k)}$ is 0-APN.
  \end{proof}

\begin{remark}
  The proof above could have also been continued by adding \eqref{eqInductionStep} to its $2^k$-th power; this would have produced the same equation as if we had added the derivative $x^{e(l-1,k)} + (x+1)^{e(l-1,k)} + 1$ to its $2^k$-th power since the extra term $1$ would have canceled out. By induction on $l$, we would have obtained the condition that if $\gcd(ik,n) = 1$ for $i = 2, 3, \dots, l$, then $x^{e(l,k)}$ must be $0$-APN. We have tested these conditions computationally, and, as expected, we observed that the condition in the statement of Theorem~\textup{\ref{thmMainTheorem}} always produces a set of dimensions $n$ that subsumes those given by the alternative condition described in this remark. This is why we have formulated the theorem only in terms of this more general condition, but we state the second condition as a corollary. 
\end{remark}

\begin{corollary}
    \label{corCascade}
    Let $n,l,k$ be natural numbers. Then, if $x^{e(l,k)} + (x+1)^{e(l,k)} + 1$ vanishes for some $x \in \F_{2^n}$, we must have $x \in \F_{2^{\gcd(jk,n)}}$ for some $j \in \{ 2, 3, \ldots, l \}$. 
    
    In particular, if $\gcd(jk,n) = 1$ for all $j \in \{ 2, 3, \ldots, l \}$, then $x^{e(l,k)}$ is $0$-APN over $\F_{2^n}$.
\end{corollary}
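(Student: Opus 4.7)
The plan is to extract from the proof of Theorem~\ref{thmMainTheorem} an iterative mechanism that drives the parameter $l$ down by $1$ at each step, then feed the resulting chain back into the original factorization trick. Starting from $x^{e(l,k)}+(x+1)^{e(l,k)}+1=0$, Theorem~\ref{thmMainTheorem} already delivers the first alternative: either $x\in\F_{2^{\gcd(lk,n)}}$ (which realizes the desired conclusion at $j=l$), or the ``un-augmented'' identity \eqref{eqInductionStep}, namely $x^{e(l-1,k)}+(x+1)^{e(l-1,k)}=0$, holds. The new ingredient I would prove is the following descent lemma: for every $j\geq 2$, the identity $x^{e(j,k)}+(x+1)^{e(j,k)}=0$ implies either $x\in\F_{2^{\gcd(jk,n)}}$ or $x^{e(j-1,k)}+(x+1)^{e(j-1,k)}=0$.

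To prove the descent step, I would expand $x^{e(j,k)}+(x+1)^{e(j,k)}=0$ exactly as in the proof of Theorem~\ref{thmMainTheorem} to obtain
\[
\sum_{\substack{I\subseteq [0,j-1]\\ I\ne [0,j-1]}} x^{2^{kI}}=0,
\]
which differs from \eqref{eqDerivative} only by the inclusion of the term $I=\emptyset$ (i.e.\ the constant $1$). Raising to the $2^{k}$-th power produces the analogous sum over subsets of $[1,j]$, and summing the two identities cancels every subset contained in $[1,j-1]$; in particular, as the remark following Theorem~\ref{thmMainTheorem} observes, the two spurious $1$'s introduced by dropping the trailing $+1$ also cancel against each other. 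The residual equation factors exactly as in the Theorem, yielding
\[
\bigl(x+x^{2^{jk}}\bigr)\,\bigl(x^{e(j-1,k)}+(x+1)^{e(j-1,k)}\bigr)^{2^{k}}=0,
\]
from which the two claimed alternatives follow immediately.

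Applying Theorem~\ref{thmMainTheorem} once and the descent lemma successively for $j=l-1,l-2,\ldots,2$ produces a finite chain of alternatives. At each step either we land in $x\in\F_{2^{\gcd(jk,n)}}$ for some $j\in\{2,\ldots,l\}$ and stop, or we descend further. The chain cannot reach $j=1$, because at that level the identity becomes $x^{e(1,k)}+(x+1)^{e(1,k)}=x+(x+1)=1=0$, a contradiction. Hence the first assertion of the corollary follows, and the ``in particular'' statement is then immediate: assuming $\gcd(jk,n)=1$ for every $j\in\{2,\ldots,l\}$ forces $x\in\F_{2}$, so $x^{e(l,k)}$ is $0$-APN.

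The main obstacle is purely bookkeeping: one must verify that the subset-cancellation done in the descent lemma genuinely reproduces the factorization from the Theorem even though the input identity no longer has the trailing $+1$. This reduces to checking that the two missing constants cancel in pairs upon adding the identity to its $2^{k}$-th power, after which the computation is mechanically identical to the one already carried out. No other serious difficulty is anticipated, since the algebraic manipulation and the arithmetic of $\gcd(jk,n)$ are entirely parallel to those of Theorem~\ref{thmMainTheorem}.
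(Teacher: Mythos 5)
Your proposal is correct and follows essentially the same route the paper sketches in the remark preceding the corollary: iterate the factorization from the proof of Theorem~\ref{thmMainTheorem}, noting that adding $x^{e(j,k)}+(x+1)^{e(j,k)}=0$ to its $2^k$-th power gives the same equation as for the derivative expression since the two constants cancel, and descend on $j$ until either $x\in\F_{2^{\gcd(jk,n)}}$ or the impossible identity $1=0$ at $j=1$ is reached. The descent lemma and the termination argument are exactly the induction the paper has in mind.
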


\begin{remark}
  The sufficient conditions of Corollary~\textup{\ref{corCascade}} allow us to explicitly determine the set of dimensions $n$ such that $x^{e(l,k)}$ over   $\F_{2^n}$ is $0$-APN. We can see that for any choice of $l$ and $k$, there are only finitely many dimensions $m = \gcd(jk,n)$ such that $x^{e(l,k)} + (x+1)^{e(l,k)} + 1$ vanishes on $\F_{2^m}$ but on no proper subfield of $\F_{2^m}$. Consequently, $e(l,k)$ is a $0$-APN exponent over $\F_{2^n}$ for any $n$ that is not a multiple of one of these dimensions $m$.

  For instance, the exponent $e(3,2) = 21$ can only violate the $0$-APN-ness on $\F_{2^6}$, $\F_{2^4}$ or $\F_{2^2}$ (or any extension field thereof). Hence, $x^{21}$ is $0$-APN over $\F_{2^n}$ for any $n$ that is not divisible by $2$, $4$ and $6$. Furthermore, we can computationally verify that $x^{21}$ is $0$-APN over $\F_{2^2}$ and $\F_{2^4}$, and that it is not $0$-APN over $\F_{2^6}$. Thus, $x^{21}$ is $0$-APN over $\F_{2^n}$ whenever $n$ is not a multiple of $6$. We remark that a proof of the same fact is given for $x^{21}$ in~\textup{\cite{budaghyan2020partially}} using the factorization of $x^{21} + (x+1)^{21} + 1$. The framework described in this remark allows this proof to be easily generalized to any function of the form $x^{e(l,k)}$, and allows us to characterize the values of $n$ for which $x^{e(l,k)}$ is $0$-APN for large values of $d$ for which it is not computationally feasible to factor $x^d + (x+1)^d + 1$.
\end{remark}

The conditions $\gcd(kl,n) = 1$ and $\gcd( e(l-1,k), 2^n-1) = 1$ are sufficient for $x^{e(l,k)}$ to be $0$-APN over $\F_{2^n}$ but are not necessary in general. The same is true for the ``cascading'' conditions formulated in Corollary~\ref{corCascade}. In particular, we can observe that the particular statement of the corollary can never be applied to finite fields $\F_{2^n}$ of even extension degree $n$ since $\gcd(2k,n) = 2$, and this violates the conditions in the corollary whenever $l > 1$.

The conditions of the corollary can be refined for instance as follows. Let $d = e(l,k)$. If $\gcd(kl,n) = 2$, then we can see that $x^d + (x+1)^d + 1 = 0$ can only have $x \in \F_4$ as a root, and $x^d$ is not $0$-APN only in the case when $x \in \F_4 \setminus \F_2$. Clearly, this happens precisely when $3 \nmid d$. If the exponent is a multiple of $3$, therefore, the restriction $\gcd(kl,n) = 1$ can be relaxed to $\gcd(kl,n) \le 2$.

A similar approach can be applied in general for $\gcd(kl,n) = m$ by imposing the restriction that $x^d + (x+1)^d + 1$ does not vanish on $\F_{2^{\gcd(m,n)}}$. For a fixed $m$, this essentially means that $d \pmod{2^m-1}$ must be a $0$-APN exponent in $\F_{2^m}$.

A trivial case is when the exponent $d$ satisfies $d \pmod{2^m-1} \equiv 0$ for some $m > 2$ dividing $n$. When this happens, the function $x^d$ coincides with the indicator function $1_0(x) = x^{2^m-1}$ over $\F_{2^m}$, and so it is always $0$-APN but can never be an APN function (except if $m = 2$). Since the primary motivation for our study is the possibility of identifying new APN monomials, all such cases can be excluded from consideration.

\begin{remark}
  To see how discriminating the condition in Theorem~\textup{\ref{thmMainTheorem}} is, we can perform a simple computational experiment as follows: pick some values of $k$ and~$l$, and generate all dimensions $n$ in some range that satisfy the conditions in Theorem~\textup{\ref{thmMainTheorem}}; by computing the number of roots of $x^e + (x+1)^e + 1 = 0$ for $e = e(l,k)$, we check whether $x^e$ is $0$-APN over $\F_{2^n}$ for all $n$ in the range, then compare the two sets. Generating all $e(l, k)$ satisfying the conditions of Theorem~\textup{\ref{thmMainTheorem}} with $l, k \le 6$, we found that all 0-APN monomials of the form $e(l,k)$ were covered by our theorem in dimensions $2 \le n \le 100$.

\end{remark}

Another consideration that we should take into account is the size of the image set of $x^{e(l,k)}$. It is known that any APN monomial $x^d$ over $\F_{2^n}$ is a bijection if $n$ is odd, and is 3-to-1 on $\F_{2^n}^*$ if $n$ is even. Furthermore, we know that this occurs if and only if $\gcd(d,2^n-1) = 1$ and $\gcd(d,2^n-1) = 3$, respectively. Thus, exponents of the form $e(l,k)$ that do not satisfy this condition can be discarded when searching for new APN monomials. This is the motivation for the following proposition.

\begin{proposition}
  Let $l,k,n$ be natural numbers such that $\gcd(k,n) = 1$. Then
  \begin{equation*}
    \gcd( e(l,k), 2^n-1) = 2^{\gcd(l,n)} - 1.
  \end{equation*}
  In particular, $x^{e(l,k)}$ is a permutation if and only if $\gcd(l,n) = 1$, and it is a $3$-to-$1$ function if and only if $\gcd(l,n) = 2$.
  \label{propImageSet}
  \end{proposition}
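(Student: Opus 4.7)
The plan is to exploit the closed form $e(l,k) = (2^{lk}-1)/(2^k-1)$ together with the classical identity $\gcd(2^a-1, 2^b-1) = 2^{\gcd(a,b)}-1$, and to use the hypothesis $\gcd(k,n) = 1$ to get rid of the denominator cleanly.

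First I would observe that $\gcd(k,n) = 1$ immediately gives $\gcd(2^k-1, 2^n-1) = 2^{\gcd(k,n)}-1 = 1$, so $2^k-1$ is a unit modulo every divisor of $2^n-1$. Next, applying the same identity to the numerator, $\gcd(2^{lk}-1, 2^n-1) = 2^{\gcd(lk,n)}-1$, and because $\gcd(k,n)=1$ we have $\gcd(lk,n) = \gcd(l,n)$. Hence $\gcd(2^{lk}-1,2^n-1) = 2^{\gcd(l,n)}-1$.

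Now I would combine the two pieces: since $2^{lk}-1 = (2^k-1)\,e(l,k)$ and $\gcd(2^k-1,2^n-1)=1$, multiplying $e(l,k)$ by the unit $2^k-1$ does not change its gcd with $2^n-1$, so
\begin{equation*}
\gcd(e(l,k), 2^n-1) = \gcd((2^k-1)\,e(l,k), 2^n-1) = \gcd(2^{lk}-1, 2^n-1) = 2^{\gcd(l,n)} - 1.
\end{equation*}
This establishes the main formula.

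For the ``in particular'' statement, I would recall that a monomial $x^d$ over $\F_{2^n}$ is a permutation iff $\gcd(d,2^n-1)=1$, and is $3$-to-$1$ on $\F_{2^n}^*$ iff $\gcd(d,2^n-1)=3$. Substituting $d = e(l,k)$ and using the formula just proved, these conditions become $2^{\gcd(l,n)}-1 = 1$ and $2^{\gcd(l,n)}-1 = 3$, i.e. $\gcd(l,n)=1$ and $\gcd(l,n)=2$ respectively. No step here is really an obstacle; the only subtlety is remembering that the coprimality of $2^k-1$ and $2^n-1$ is what justifies cancelling the denominator in the gcd computation, and this is precisely where the hypothesis $\gcd(k,n)=1$ is used.
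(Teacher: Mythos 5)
Your proof is correct and follows essentially the same route as the paper: both cancel the denominator $2^k-1$ using $\gcd(2^k-1,2^n-1)=1$ (a consequence of $\gcd(k,n)=1$) and then apply $\gcd(2^{lk}-1,2^n-1)=2^{\gcd(lk,n)}-1=2^{\gcd(l,n)}-1$. Nothing to add.
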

  \begin{proof}
    Since $\gcd(A,B) = \gcd(CA,B)$ for $C$ with $\gcd(C,B) = 1$, we have that
    \begin{equation*}
      \gcd( e(l,k),2^n-1 ) = \gcd \left( \frac{2^{lk} - 1}{2^k - 1}, 2^n-1 \right) = \gcd(2^{lk} - 1, 2^n-1)
    \end{equation*}
    due to $\gcd(k,n) = 1$ implying $\gcd(2^k-1,2^n-1) = 1$. Again from $\gcd(k,n) = 1$, we have
    \begin{equation*}
      \gcd(2^{lk} - 1, 2^n-1) = 2^{\gcd(lk,n)}-1 = 2^{\gcd(l,n)} - 1,
    \end{equation*}
    which shows our proposition.
  \end{proof}

\section{Equivalence to known monomial families}

In this section, we study when the exponent $e(l,k)$ can be cyclotomic equivalent to an exponent from one of the known families. Recall that two exponents $e$ and $d$ are cyclotomic equivalent modulo $n$ if either $e$ or $e^{-1}$ is in the cyclotomic coset of $d$ modulo $2^n-1$. For each of the infinite APN families, we treat the two cases separately. In the case of the Dobbertin family, a characterization using our current techniques is too cumbersome due to the large number of degenerate cases that need to be treated before applying Lemma~\ref{lemUniqueExpansion}, and so we supply computational data instead showing that $e(l,k)$ can not be equivalent to the Dobbertin inverse for $n \le 200$ except for $n = 5$ and $n = 10$. We refer the reader to the discussion at the end of Section \ref{secIntroduction} for more details about why we have made this decision.

Below, we shall have arguments dealing with the set (or rather, multiset, since we allow for potential repetitions of elements) of exponents in a sum of powers of $2$ and  we make the convention that if the set contains two ``copies'' of the same element $j$, then the set is {\em compressed} by replacing the two copies of $j$ by $j+1$. For instance, the expression $2^a + 2^b + 2^c$, for some natural numbers $a,b,c$ corresponds to the set of exponents $\{ a,b,c\}$, and if $a = b$ so that $2^a + 2^b + 2^c = 2^{a+1} + 2^c$, then the set compresses to $\{ a+1, c \}$. If $S$ is a set of integers, we will also used the shorthand notation
\begin{equation*}
    S \Mod n = \{ s \Mod n : s \in S \}.
\end{equation*}
Similarly, we will write $A \equiv B \pmod{n}$ if $A \Mod n = B \Mod n$ for two sets $A,B$.

In the sequel, we make use of the following simple observation. It is based on the well-known fact that the binary weight of any two integers in the same cyclotomic coset modulo $2^n-1$ is the same.

\begin{lemma}
  Let $n,M$, $a_1, a_2, \dots, a_M$, $b_1, b_2, \dots, b_M$, and $n$ be natural numbers such that all $a_i$ for $1 \le i \le M$ are distinct modulo $n$, and all $b_i$ for $1 \le i \le M$ are distinct modulo $n$. Suppose that
  \begin{equation}
    \label{eqExponentEquality}
    \sum_{i = 1}^M 2^{a_i} \equiv \sum_{i = 1}^M 2^{b_i} \pmod{2^n-1}.
  \end{equation}
  Then
  \begin{equation*}
    \{ a_i \Mod{n} : 1 \le i \le M \} = \{ b_i \Mod{n} : 1 \le i \le M \}.
  \end{equation*}

  \begin{proof}
    Suppose that \eqref{eqExponentEquality} holds and consider the left-hand side. Since $2^{a_i} \equiv 2^{a_i \Mod n} \pmod{2^n-1}$, we can assume that $a_i < n$ for all $i$. Since by assumption all $a_i$ are distinct modulo $n$, the weight of the sum on the left-hand side will remain unchanged after this modulation, and we will once again have $M$ terms. Similarly, we can modulate the sum on the right-hand side, and thus assume that $b_i < n$ for all $i$. Since all the powers of $2$ on the left-hand side are distinct, their sum cannot be greater than $2^n-1$; the same is true for the right-hand side, and so the assumption that the two sums are congruent in fact implies that they are equal. The claim then follows by the uniqueness of the binary expansion.
  \end{proof}
  \label{lemUniqueExpansion}
\end{lemma}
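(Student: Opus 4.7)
The plan is to exploit the identity $2^a \equiv 2^{a \bmod n} \pmod{2^n-1}$, which lets me reduce each exponent on both sides of the congruence to lie in $\{0, 1, \dots, n-1\}$ without changing the residues modulo $2^n-1$. Since the hypothesis says the $a_i$ are pairwise distinct modulo $n$, the reduced exponents $a_i \Mod n$ remain $M$ distinct elements of $\{0, \dots, n-1\}$; the same applies to the $b_i$. So after reduction both sides are sums of $M$ distinct powers of $2$ drawn from $\{2^0, 2^1, \dots, 2^{n-1}\}$.

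Next, I would observe that any sum of distinct powers of $2$ chosen from $\{2^0, \dots, 2^{n-1}\}$ is at most $2^n - 1$, with equality only when all $n$ powers are present. In our situation each side is at most $2^n - 1$, and in fact strictly less unless $M = n$ (the edge case where both sides equal $2^n - 1$ and the claim is trivial). Thus the congruence $\sum 2^{a_i \Mod n} \equiv \sum 2^{b_i \Mod n} \pmod{2^n-1}$ is actually an equality of nonnegative integers both lying in $[0, 2^n-1]$.

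Finally, I would invoke the uniqueness of the binary expansion of a nonnegative integer: if two sums of distinct powers of $2$ produce the same integer, then the multisets of exponents must coincide. Applied to the reduced sides this yields
\begin{equation*}
\{a_i \Mod n : 1 \le i \le M\} = \{b_i \Mod n : 1 \le i \le M\},
\end{equation*}
which is the claim.

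The only real subtlety is ensuring that the reductions modulo $n$ on the two sides do not secretly introduce carries that would invalidate the ``sum of distinct powers of $2$ bounded by $2^n-1$'' argument. This is exactly where the distinctness hypothesis is used: without it, two exponents could collapse to the same residue and force a carry, turning the tidy binary expansion argument into a genuine combinatorial problem. Once distinctness is in hand, the proof is essentially a two-line appeal to uniqueness of binary representation, so I do not expect any significant obstacle beyond carefully recording the reduction step.
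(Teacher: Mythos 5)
Your proposal is correct and follows essentially the same route as the paper's proof: reduce each exponent modulo $n$ (distinctness guaranteeing no collisions or carries), bound each side by $2^n-1$ so that congruence becomes equality, and conclude by uniqueness of the binary expansion. The only difference is that you explicitly flag the trivial edge case $M = n$, which the paper leaves implicit.
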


In many of the following proofs, we will use the fact that we know the algebraic weight of an exponent $d$ from one of the known families, and we would like to select a value of $l$ such that $\wt(e(l,k) \Mod (2^n-1)) = \wt(d)$. Following Theorem \ref{thmMainTheorem}, we will focus on the cases when $\gcd(n,k) = 1$ and $\gcd(n,k) = 2$. To begin with, we can observe that if $l < n$ and $\gcd(n,k) = 1$, then $\wt(e(l,k)) = l$ since all of the exponents $2^{jk}$ in $e(l,k) = \sum_{j} 2^{jk}$ are distinct modulo $n$.

\begin{observation}
  \label{obsCoprime}
  Let $n,l,k$ be natural numbers such that $l < k$ and $\gcd(l,k) = 1$. Then $\wt(e(l,k) \Mod (2^n-1)) = l$.
\end{observation}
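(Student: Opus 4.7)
The plan is to write $e(l,k) = \sum_{j=0}^{l-1} 2^{jk}$ and reduce each exponent $jk$ modulo $n$, showing that the resulting $l$ residues are pairwise distinct and all lie in $\{0, 1, \ldots, n-1\}$. Since $2^a \equiv 2^{a \Mod n} \pmod{2^n-1}$, this expresses $e(l,k) \Mod (2^n-1)$ as a sum of $l$ distinct powers of $2$ strictly less than $2^n$, which is a genuine binary expansion, so the weight must be exactly $l$.

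The only substantive step is the injectivity of the map $j \mapsto jk \Mod n$ on the index set $\{0, 1, \ldots, l-1\}$. Under the coprimality hypothesis, if $j_1 k \equiv j_2 k \pmod{n}$ then $k$ is invertible modulo $n$, so one may cancel it to obtain $j_1 \equiv j_2 \pmod{n}$; combined with $|j_1 - j_2| \le l - 1 < n$, this forces $j_1 = j_2$. Hence the $l$ exponents $jk \Mod n$ for $j = 0, 1, \ldots, l-1$ are distinct elements of $\{0, \ldots, n-1\}$.

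It then follows that $S := \sum_{j=0}^{l-1} 2^{jk \Mod n}$ is a sum of $l$ distinct powers of $2$ drawn from $\{2^0, 2^1, \ldots, 2^{n-1}\}$; since $l < n$, at least one power of $2$ in this range is missing from $S$, so $0 < S < 2^n - 1$. Therefore $S$ is the unique non-negative representative of $e(l,k)$ modulo $2^n-1$ lying below $2^n - 1$, and the displayed sum is literally its binary expansion, with $l$ ones. This gives $\wt(e(l,k) \Mod (2^n-1)) = l$.

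The entire argument is a direct application of uniqueness of binary expansions, in the same spirit as Lemma~\ref{lemUniqueExpansion}, so no real obstacle arises. The only mild subtlety worth flagging is that the strict inequality $l < n$ is essential: were $l$ allowed to equal $n$, the residues $jk \Mod n$ would exhaust $\{0, 1, \ldots, n-1\}$ and $S$ would collapse to $2^n - 1 \equiv 0 \pmod{2^n-1}$, giving weight $0$ rather than $l$.
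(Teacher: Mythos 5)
Your argument is correct and is exactly the justification the paper gives in the sentence preceding the observation: since $k$ is invertible modulo $n$ and $l<n$, the exponents $jk$ for $0\le j\le l-1$ are pairwise distinct modulo $n$, so the reduced sum is a genuine binary expansion with $l$ ones. Note that you have (rightly) read the hypothesis as $l<n$ and $\gcd(k,n)=1$ rather than the literal ``$l<k$ and $\gcd(l,k)=1$'' printed in the statement, which is evidently a typo --- under the printed hypothesis the claim fails, e.g.\ for $l=2$, $k=3$, $n=3$, where $e(2,3)=9\equiv 2\pmod{7}$ has weight $1$.
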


The situation when $\gcd(k,n) = 2$ is slightly more complicated, and it is addressed by the following Lemma \ref{lem:gcd2}. Note that in the first few cases we only characterize the weight of $e(l,k)$ modulo $2^n-1$ since this is what we need in the subsequent characterizations. However, in the last case, we show that $e(3n/2 + m, 2t)$ not only has the same weight, but is in fact congruent to $e(m,2t)$ modulo $2^n-1$. This shows that there is no need to consider values of $l$ in $e(l,2t)$ greater than $3n/2$ up to equivalence.

Before proving Lemma \ref{lem:gcd2}, we first prove the following auxiliary result. It is useful on its own (by reducing the number of values of $k$ that have to be considered up to equivalence), and is also used in the proof of Lemma \ref{lem:gcd2}. 

\begin{lemma}
\label{lem2}
The following are true:
\begin{enumerate}
\item[$(i)$]  Let $l,k,m$ be natural numbers with $n = 2m$. Then $e(l,m-k)$ and $e(l,m+k)$ are cyclotomic equivalent modulo $2^n-1$. 
\item[$(ii)$]
Let $l,k,m$ be natural numbers with $n = 2m+1$. Then $e(l,m-k+1)$ and $e(l,m+k)$ are cyclotomic equivalent modulo $2^n-1$. 
\end{enumerate}
  \begin{proof}
   We show $(i)$ first. Let $X = lk + lm + m - k$. We claim that $2^X e(l,m-k) \equiv e(l,m+k) \pmod{2^n-1}$. Recall that we can write
    \begin{equation*}
      e(l,K) = \frac{2^{lK} - 1}{2^K - 1}.
    \end{equation*}
    We now multiply the above for $K = m-k$ by $2^X$ with the aforementioned~$X$. We use the fact that $2m = n \equiv 0 \pmod{n}$, and obtain
    \begin{equation*}
      \begin{split}
	2^X \frac{2^{l(m-k)}-1}{2^{m-k}-1} \equiv & \frac{2^{lm - lk + lk + lm + m - k} - 2^{lk + lm + m -k}}{2^{m-k}-1}  \equiv \frac{2^{m - k} - 2^{lk + lm + m - k}}{2^{m-k}-2^{2m}} \\
	\equiv & \frac{2^{m-k}(1 - 2^{l(k+m)})}{2^{m-k}(1 - 2^{m+k})} \equiv \frac{1 - 2^{l(k+m)}}{1 - 2^{m+k}} \equiv \frac{2^{l(k+m)}-1}{2^{m+k}-1} \pmod{2^n-1}.
      \end{split}
    \end{equation*}
    The claim $(ii)$ follows similarly, by taking $X=l(m+k)+m-k+1$.
  \end{proof}
  \label{lemHalfDimension}
\end{lemma}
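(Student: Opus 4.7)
The plan is to exhibit, for each part, an explicit integer $a$ such that $2^a \cdot e(l, m-k) \equiv e(l, m+k) \pmod{2^n-1}$ in part $(i)$, and analogously $2^a \cdot e(l, m-k+1) \equiv e(l, m+k) \pmod{2^n-1}$ in part $(ii)$. The guiding observation is that $n = 2m$ forces $(m-k) + (m+k) \equiv 0 \pmod{n}$, and $n = 2m+1$ forces $(m-k+1) + (m+k) \equiv 0 \pmod{n}$. In both cases the two ``step sizes'' are additive inverses of each other modulo $n$, so the arithmetic progressions of exponents that define $e(l, m-k)$ (resp. $e(l, m-k+1)$) and $e(l, m+k)$ run through the same multiset of residues modulo $n$, just in reverse order and up to an overall shift.

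For part $(i)$, I would write $2^a e(l, m-k) = \sum_{j=0}^{l-1} 2^{a + j(m-k)}$ and choose $a \equiv -(l-1)(m-k) \pmod{n}$, so that the exponent sequence becomes $\{(j-(l-1))(m-k) : 0 \le j \le l-1\}$. After the reindexing $i = l-1-j$ this is $\sum_{i=0}^{l-1} 2^{-i(m-k)}$, and the identity $-(m-k) \equiv m+k \pmod{n}$ (valid because $2m \equiv 0 \pmod{n}$) converts each term to $2^{i(m+k)}$, giving $e(l, m+k)$ modulo $2^n-1$. A convenient positive representative of $-(l-1)(m-k) \pmod{2m}$ is $X = lk + lm + m - k$, which is precisely the value that makes the computation come out cleanly when phrased in the closed-form $e(l,K) = (2^{lK}-1)/(2^K-1)$.

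Part $(ii)$ follows verbatim with $m-k$ replaced by $m-k+1$ throughout. The appropriate shift is $a \equiv -(l-1)(m-k+1) \pmod{2m+1}$, and the key congruence becomes $-(m-k+1) \equiv m+k \pmod{2m+1}$ (since $2m+1 \equiv 0$). A convenient positive representative is then $X = l(m+k) + m - k + 1$.

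The only potentially delicate point is the bookkeeping for $a$: one must either verify that the chosen $a$ is non-negative, or invoke $2^n \equiv 1 \pmod{2^n-1}$ to replace negative exponents by positive ones. Both are routine because every exponent that appears is a small integer-linear combination of $m$, $k$, and $l$, and nothing more than the geometric-series identity for $e(l,K)$ is required. I do not anticipate any real obstacle beyond choosing notation so that the congruence chain is transparent; the argument is essentially a one-line reversal of an arithmetic progression, wrapped in the cyclotomic-coset formalism.
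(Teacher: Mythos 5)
Your proposal is correct, and it arrives at exactly the same shift as the paper: one checks that your $a \equiv -(l-1)(m-k) \pmod{2m}$ agrees with the paper's $X = lk+lm+m-k$ (their difference is $2lm \equiv 0 \pmod{2m}$), and likewise $-(l-1)(m-k+1) \equiv l(m+k)+m-k+1 \pmod{2m+1}$ in part $(ii)$. The mechanism, however, is genuinely different. The paper works with the closed form $e(l,K) = (2^{lK}-1)/(2^K-1)$ and pushes the congruence through a chain of fraction manipulations modulo $2^n-1$; this is slick but requires some care, since $2^{m-k}-1$ is generally not invertible modulo $2^n-1$, so reducing numerator and denominator separately is a step one should really justify (it does work out, because the relevant quotients are honest integers, but the paper leaves this implicit). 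Your term-by-term argument on the sum $\sum_j 2^{j(m-k)}$ -- shift by $a$, reverse the index, and apply $-(m-k) \equiv m+k \pmod n$ together with $2^x \equiv 2^{x \bmod n} \pmod{2^n-1}$ -- sidesteps the division issue entirely and makes the underlying reason transparent: the two step sizes are additive inverses modulo $n$, so the two exponents enumerate the same arithmetic progression of residues in opposite directions. The only bookkeeping you flag (negative exponents) is indeed routine. In short: same conclusion, same shift, but your route is the more elementary and arguably the more robust of the two.
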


\begin{lemma}
\label{lem:gcd2}
Let $\gcd(n, 2t) = 2$, the following statements are true$:$
\begin{enumerate}
    \item[$1.$] $\wt\left(e\left(m, 2t \right) \right) = m$ for any $0 < m < \frac{n}{2};$
    \item[$2.$] $\wt\left(e\left(\frac{n}{2} + m, 2t \right) \right) = \frac{n}{2}$ for any $0 < m < \frac{n}{2};$
    \item[$3.$] $\wt\left(e\left(n + m, 2t \right) \right) = \frac{n}{2} + m$ for any $0 < m < \frac{n}{2};$
    \item[$4.$] $e\left(\frac{3n}{2} + m, 2t \right) \equiv e\left(m, 2t \right) \pmod{2^n - 1}$ for any $0 < m < \frac{n}{2}$.
\end{enumerate}
\end{lemma}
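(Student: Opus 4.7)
The plan is to exploit $2^n \equiv 1 \pmod{2^n-1}$ to reduce each exponent appearing in $e(l, 2t) = \sum_{j=0}^{l-1} 2^{2jt}$ modulo $n$. Since $\gcd(n, 2t) = 2$ (which in particular forces $n$ to be even, so that $n/2$ and $3n/2$ make sense as integers), the sequence $\{2jt \bmod n\}_{j \ge 0}$ has period exactly $n/2$, and during one full period it runs through every even residue in $\{0, 2, 4, \ldots, n-2\}$ exactly once. The central identity driving everything is
\begin{equation*}
  e(n/2, 2t) \equiv \sum_{a \in \{0, 2, \ldots, n-2\}} 2^a = \frac{2^n - 1}{3} \pmod{2^n - 1},
\end{equation*}
which follows immediately from enumerating the residues.

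For part 1, when $0 < m < n/2$ the integers $2jt$ for $j = 0, 1, \ldots, m-1$ are pairwise distinct modulo $n$, so Lemma \ref{lemUniqueExpansion} yields $\wt(e(m, 2t) \bmod (2^n-1)) = m$ directly. For part 4, I would split $e(3n/2 + m, 2t)$ into three consecutive full periods of length $n/2$ followed by a tail of length $m$. Using $2^{nt} \equiv 1$ (hence also $2^{2nt} \equiv 1$), each full period reduces modulo $2^n-1$ to a copy of $e(n/2, 2t) \equiv (2^n-1)/3$, while the tail reduces to $e(m, 2t)$. The three full periods thus contribute $3 \cdot (2^n-1)/3 = 2^n - 1 \equiv 0 \pmod{2^n-1}$, leaving $e(3n/2 + m, 2t) \equiv e(m, 2t) \pmod{2^n-1}$.

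For parts 2 and 3, the same shifting argument via $2^{nt} \equiv 1$ yields the decompositions
\begin{equation*}
  e(n/2 + m, 2t) \equiv \frac{2^n - 1}{3} + e(m, 2t), \qquad e(n + m, 2t) \equiv \frac{2(2^n-1)}{3} + e(m, 2t) \pmod{2^n - 1},
\end{equation*}
and the task becomes computing the binary weights of these integer sums. Observe that $(2^n - 1)/3$ is supported on exactly the $n/2$ even bit positions, while $2(2^n - 1)/3$ is supported on exactly the $n/2$ odd bit positions. For part 3, $e(m, 2t) \bmod (2^n-1)$ contributes ones at $m$ distinct even bit positions, disjoint from the odd support of $2(2^n-1)/3$, giving a clean total weight of $n/2 + m$ with no carries. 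For part 2, at each of the $m$ even positions shared by $(2^n-1)/3$ and $e(m, 2t)$ the two contributions add to produce a carry into the corresponding odd position; the delicate point, which I would verify explicitly, is that these carries do not cascade, since no odd position is occupied by either summand, and the resulting integer stays strictly below $2^n - 1$ because $m < n/2$. The final weight is then $(n/2 - m) + m = n/2$. The carry-analysis step in part 2 is the only delicate point; everything else is a direct application of the central identity above.
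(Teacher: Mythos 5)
Your proof is correct. The skeleton --- decomposing $e(l,2t)$ into full periods of length $n/2$ plus a tail, then doing a carry analysis --- matches the paper's, but your organizing identity $e(n/2,2t)\equiv\sum_{a\ \mathrm{even}}2^a=(2^n-1)/3\pmod{2^n-1}$ (valid because the residues $2jt \Mod n$ sweep out exactly the even residues once per period, since $\gcd(n,2t)=2$) lets you treat all four parts, for general $t$, in a uniform way, and that is genuinely different in execution. The paper instead (i) reduces part 2 to the case $t=1$ by asserting that $e(\cdot,2t)$ and $e(\cdot,2)$ have the same weight, (ii) handles the general-$t$ case of part 3 with a separate pigeonhole argument pairing each $j_1<n/2$ with the unique $j_2=n/2+j_1$ yielding the same residue, and (iii) proves part 4 by a divisibility computation, writing $\sum_{j=0}^{3n/2-1}2^{2jt}=(2^{6kt}-1)/(2^{2t}-1)$ with $n=2k$ and invoking $\gcd(2^{2t}-1,2^{2k}-1)=3$ together with $3\mid(4^{k(3t-1)}+\cdots+4^{k}+1)$. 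Your route buys brevity and uniformity: the three full periods in part 4 sum to $3\cdot(2^n-1)/3\equiv 0$, and the weight counts in parts 2 and 3 fall out of the disjointness (or single-step carries) between the even support of $(2^n-1)/3$ and the odd support of $2(2^n-1)/3$. The only points worth writing out in full are the ones you already flag: that the integers $(2^n-1)/3+\bigl(e(m,2t)\Mod(2^n-1)\bigr)$ and $2(2^n-1)/3+\bigl(e(m,2t)\Mod(2^n-1)\bigr)$ are strictly below $2^n-1$ (using $m<n/2$), so no further reduction disturbs the bit pattern, and that in part 2 each carry lands in an odd position occupied by neither summand, so the carries cannot cascade.
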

\begin{proof}
  \begin{enumerate}
    \item[$1.$] The first claim is straightforward.
    \item[$2.$] Since $\gcd(n, t) = 1$ then $e\left(\frac{n}{2} + m, 2t \right)$ and $e\left(\frac{n}{2} + m, 2 \right)$ have the same weight, so it suffices to find the weight of $e\left(\frac{n}{2} + m, 2 \right)$. Assuming $0 < m < \frac{n}{2}$, we notice
    \begin{equation*}
        e\left(\frac{n}{2} + m, 2 \right) = \sum_{j = 0}^{\frac{n}{2} - 1} 2^{2j} +   \sum_{j = 0}^{m - 1} 2^{2\left(\frac{n}{2} + j\right)} .
    \end{equation*}
    We note that
    \begin{equation*}
        2^{2\left(\frac{n}{2} + j\right)} \equiv 2^{2j} \pmod{2^n - 1},
    \end{equation*}
    and so
    \begin{align*}
        e\left(\frac{n}{2} + m, 2 \right) &\equiv \left(\sum_{j = 0}^{\frac{n}{2} - 1} 2^{2j}   +  \sum_{j = 0}^{m - 1} 2^{2j} \right) \pmod{2^n - 1} \\
        &\equiv \left( \sum_{j = 0}^{m - 1} 2^{2j + 1}+ \sum_{j = m}^{\frac{n}{2} - 1} 2^{2j} \right) \pmod{2^n - 1}
    \end{align*}
    has weight $m + \left(\frac{n}{2} - m\right)  = \frac{n}{2}$.

    \item[$3.$] Assuming $0 < m < \frac{n}{2}$, we note that
    \begin{equation*}
        e(n + m, 2) =  \sum_{j = 0}^{n - 1} 2^{2j}  + \sum_{j = 0}^{m - 1} 2^{2\left(n + j\right)},
    \end{equation*}
    and that
    \begin{equation*}
        \sum_{j = 0}^{n - 1} 2^{2j} =  \sum_{j = 0}^{\frac{n}{2} - 1} 2^{2j} + \sum_{j = 0}^{\frac{n}{2} - 1} 2^{2(\frac{n}{2} + j)} \equiv \left( \sum_{j = 0}^{\frac{n}{2} - 1} 2^{2j + 1} \right) \pmod{2^n - 1},
    \end{equation*}
    so that
    \begin{equation*}
        e(n + m, 2) \equiv \left( \sum_{j = 0}^{\frac{n}{2} - 1} 2^{2j + 1} + \sum_{j = 0}^{m - 1} 2^{2j} \right) \pmod{2^n - 1}.
    \end{equation*}
    It follows that $e(n + m, 2)$ has weight $\frac{n}{2} + m$. The general case of $t>1$ is treated in the following way. Observe that
    \[
    e\left(n + m, 2t\right)= \sum_{j = 0}^{n - 1} 2^{2tj}  + \sum_{j = 0}^{m - 1} 2^{2t (n+j)} \equiv \sum_{j = 0}^{n - 1} 2^{2tj}  + \sum_{j = 0}^{m - 1} 2^{2t j}\pmod{2^n - 1}.
    \]
    To prove our claim, we will argue that the first sum compresses to  precisely $n/2$ terms, all of which have {\em odd} exponents. For that, we will show that given $0\leq j_1<n/2$, there is a unique $n/2\leq j_2<n$ ($j_2$ cannot be in the interval $[0,n/2)$, as we will see below) such that $ 2^{2tj_1} \equiv  2^{2tj_2}  \pmod{2^n - 1}$ (the situation is similar if we start with $n/2\leq j_1<n$). Via Lemma~\ref{lem2}, we know that one can take $2t<n/2$, so we will assume that from here on in our argument. Since $\gcd(n,2t)=2$, then $\gcd(n,t)=1$. First, given $0\leq j_1<n/2$, we can take $j_2=n/2+j_1$. Then $2^{2tj_2} - 2^{2tj_1} =2^{2tj_1}\left(2^{2t(j_2-j_1)}-1 \right)\equiv 0\pmod{2^n - 1}$, since $n\,|\,2t(j_2-j_1)=nt$. The existence is shown, and next, we show uniqueness (via a Dirichlet principle type argument). If there exist two values $j_2<j_3$, say, such that  for a given $0\leq j_1<n/2$, we have  $2^{2tj_1} \equiv  2^{2tj_2}  \equiv  2^{2tj_3}  \pmod{2^n - 1}$, then $n\,|\, 2t(j_2-j_1)$ and $n\,|\, 2t(j_3-j_1)$ and consequently, $n\,|\, 2t(j_3-j_2)$. However,  because $\gcd(n,t)=1$, it is not possible that both $j_2,j_3$ belong to the interval $[n/2,n)$, since then $j_3-j_2<n/2$ and $n/2$ cannot divide their difference. In the same way, neither $j_2$, nor $j_3$ can belong to the interval $[0,n/2)$ since then $n/2$ could not divide the difference $j_2-j_1$, or, $j_3-j_1$.

    \item[$4.$] Assuming $0 < m < \frac{n}{2}$, we note that
    \begin{equation*}
        e\left(\frac{3n}{2} + m, 2t\right) = \sum_{j = 0}^{\frac{3n}{2} - 1} 2^{2jt}  +  \sum_{j = 0}^{m - 1} 2^{2\left(\frac{3n}{2} + jt\right)}.
    \end{equation*}
    Recalling $n = 2k$, we see that
    \begin{equation*}
        \sum_{j = 0}^{\frac{3n}{2} - 1} 2^{2jt} = \frac{2^{3nt} - 1}{2^{2t} - 1} = \frac{2^{6kt} - 1}{2^{2t} - 1},
    \end{equation*}
    with
    \begin{equation*}
        2^{6kt} - 1 = (2^{2k} - 1)\left(2^{2k(3t - 1)} + 2^{2k(3t - 2)} + \cdots + 2^{2k} + 1\right).
    \end{equation*}
    Notice that 
    \begin{equation*}
        2^{2k(3t - 1)} + \cdots + 2^{2k} + 1 = 4^{k(3t - 1)} + \cdots + 4^{k} + 1 \equiv 3t \equiv 0 \pmod{3},
    \end{equation*}
    so that
    \begin{equation*}
        \frac{2^{6kt} - 1}{2^{2t} - 1} = \frac{3q\left(2^{2k} - 1\right)}{{2^{2t} - 1}},
    \end{equation*}
    for some $q \in \mathbb{N}$. We note that $\gcd(2^{2t} - 1, 2^{2k} - 1) = 3$, and so
    \begin{equation*}
        \frac{3q(2^{2k} - 1)}{2^{2t} - 1} = q'\left(2^{2k} - 1\right) \equiv 0 \pmod{2^{2k} - 1}
    \end{equation*}
    for some $q' \in \mathbb{N}$. It follows that
    \begin{equation*}
        e\left(\frac{3n}{2} + m, 2t\right) \equiv \sum_{j = 0}^{m - 1} 2^{2jt} \pmod{2^n - 1}.
    \end{equation*}
\end{enumerate}
The lemma is shown.
\end{proof}

We also frequently make use of the following observation which follows from the fact that $1 + 2 + \cdots +2^{n-1} \equiv 0 \pmod{2^n-1}$ (see also \cite{CLS11}).

\begin{observation}
  \label{obsBinaryComplement}
  The binary decomposition of $-a$ modulo $2^n-1$ is the complement of that of $a$. 
\end{observation}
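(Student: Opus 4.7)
The plan is to show this directly from the identity $\sum_{i=0}^{n-1} 2^i = 2^n - 1 \equiv 0 \pmod{2^n-1}$. First I would fix $a$ with $0 \le a \le 2^n - 1$ and write its (unique) binary expansion $a = \sum_{i \in I} 2^i$ for some $I \subseteq \{0, 1, \ldots, n-1\}$. Then the complement is defined as $a^c = \sum_{i \in \{0,\ldots,n-1\} \setminus I} 2^i$, and the observation amounts to the claim $-a \equiv a^c \pmod{2^n-1}$.

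The key step is simply to add:
\begin{equation*}
a + a^c = \sum_{i \in I} 2^i + \sum_{i \in \{0,\ldots,n-1\} \setminus I} 2^i = \sum_{i=0}^{n-1} 2^i = 2^n - 1 \equiv 0 \pmod{2^n-1}.
\end{equation*}
Hence $a^c \equiv -a \pmod{2^n-1}$, which is exactly the statement of the observation. For a general representative $a$ modulo $2^n-1$ one first reduces to the range $[0, 2^n-2]$ before taking the binary decomposition, so there is no loss of generality.

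There is no real obstacle here; the only minor point worth mentioning is the convention that when $a \equiv 0 \pmod{2^n-1}$, one takes the representative $a = 0$ with empty binary support, and then $a^c = 2^n - 1 \equiv 0$, so the complement is consistent with $-0 \equiv 0$. Everything else is immediate from the geometric series identity.
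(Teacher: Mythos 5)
Your argument is correct and is exactly the one the paper intends: the observation is justified there by the single remark that $1 + 2 + \cdots + 2^{n-1} \equiv 0 \pmod{2^n-1}$, and your computation $a + a^c = \sum_{i=0}^{n-1} 2^i \equiv 0$ is the straightforward elaboration of that remark. Nothing is missing; the edge case $a \equiv 0$ you flag is a reasonable (if minor) point of care.
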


For example, $3$ can be written as $(000011)$, i.e. $2^0 + 2^1$ in binary, and $-3 \equiv 60 \pmod{63}$ has the binary expansion $(111100)$, i.e. $2^2 + 2^3 + 2^4 + 2^5$.

\subsection{Gold and Inverse case}

We can see that representatives from some of the known infinite families of APN monomials can be expressed in the form $e(l,k)$. This can be observed quite easily using the formula for the sum of a geometric progression. In particular, the Gold functions $x^{2^k + 1}$ can clearly be expressed as $e(2,k)$. The inverse function can be written as $\displaystyle e(n-1,1) = \sum_{i = 0}^{n-2} 2^i = 2^{n-1} - 1$. We have also observed that in some cases, e.g. for $l = (n-1)/2$ and $k = 2$, or for $l = (n-1)/2 + 1$ and $k = 1$, $e(l,k)$ is equivalent to a Gold function, which is not surprising since the inverse of the Gold function $x^{2^r+1}$ over $\F_{2^n}$, $n$ odd, $\gcd(n,r)=1$, is given by $e(2r, \frac{n+1}{2})=\sum_{i=0}^{\frac{n-1}{2}} 2^{2ir}$. 

\subsection{Welch case}

We note that the Welch exponent is only defined for odd dimensions $n$. Since we assume $\gcd(k,n) \le 2$ for $e(l,k)$ in Theorem \ref{thmMainTheorem} and the following remark, this leaves us with $\gcd(k,n) = 1$ as the only possibility. By Observation \ref{obsCoprime} we know that $\wt(e(l,k) \Mod (2^n-1)) = l$, and since the weight of the Welch exponent $2^t + 2 + 1$ is $3$, it is enough to consider $e(3,k)$.

\begin{theorem}
  Let $n$ be a natural number. Let $W = 2^t + 2 + 1$ be the Welch exponent, where $t > 1$ is some natural number. Suppose $t > 2$. Then $W$ and $e(3,i)$ never lie in the same cyclotomic coset modulo $2^n-1$ for any $0 < i < n$ with $n = 2t + 1$.
  \label{propWelch}
\end{theorem}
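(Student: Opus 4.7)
The plan is to suppose for contradiction that $2^a \cdot e(3,i) \equiv W \pmod{2^n - 1}$ for some natural number $a$, which rewrites as
\[
    2^a + 2^{a+i} + 2^{a+2i} \equiv 1 + 2 + 2^t \pmod{2^n - 1},
\]
and then apply Lemma~\ref{lemUniqueExpansion} to force the two multisets of exponents to coincide modulo $n$. Before invoking the lemma, I would verify non-degeneracy: the triple $\{a, a+i, a+2i\} \pmod n$ has three distinct elements because $n = 2t+1$ is odd and $0 < i < n$ rule out $n \mid i$ or $n \mid 2i$; the triple $\{0, 1, t\} \pmod n$ has three distinct elements because $t > 2$ and $n \geq 7$.

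Lemma~\ref{lemUniqueExpansion} then forces $\{0, i, 2i\} \equiv \{b, b + 1, b + t\} \pmod n$ for some $b$ (where $b \equiv -a \pmod n$). I would finish by splitting into three subcases based on which of $b,\, b+1,\, b+t$ is congruent to $0 \pmod n$, and in each case reading off the forced relation between $i$ and $t$. In the subcase $b \equiv 0$, the residual equation is $\{i, 2i\} \equiv \{1, t\} \pmod n$: the assignment $i \equiv 1$ gives $t \equiv 2$ (contradicting $t > 2$), while $i \equiv t$ gives $2t \equiv 1$, hence $n \mid 2$. In the subcase $b \equiv -1$, one has $\{i, 2i\} \equiv \{2t,\, t-1\} \pmod n$; using $4t \equiv -2 \pmod n$, both assignments force either $t \equiv 0 \pmod n$ or $n \mid 2$. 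In the subcase $b \equiv -t$, the identities $-t \equiv t+1$ and $1-t \equiv t+2 \pmod n$ give $\{i, 2i\} \equiv \{t+1, t+2\} \pmod n$; combined with $2(t+1) \equiv 1$ and $2(t+2) \equiv 3 \pmod n$, the two assignments produce $t \equiv 0 \pmod n$ or $t = 2$, both ruled out by the hypothesis.

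The main obstacle is purely bookkeeping the modular arithmetic across the three subcases and their two sub-assignments; no deeper input beyond Lemma~\ref{lemUniqueExpansion} is needed, once the non-degeneracy hypothesis of that lemma is checked. The assumption $t > 2$ is sharp, as for $t = 2$ one has $W = 2^2 + 3 = 7 = 1 + 2 + 4 = e(3, 1)$ over $\F_{2^5}$, so cyclotomic equivalence would actually hold in that excluded case.
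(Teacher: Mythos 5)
Your proposal is correct and follows essentially the same route as the paper: both reduce the congruence $2^a(2^{2i}+2^i+1)\equiv 2^t+2+1 \pmod{2^n-1}$ to an equality of exponent sets via Lemma~\ref{lemUniqueExpansion} (after checking the distinctness hypotheses) and then finish by a case analysis on how the two three-element sets match up modulo $n$. Your organization of the cases—normalizing by $b\equiv -a$ and splitting on which of $b,\,b+1,\,b+t$ is $0\bmod n$, working entirely with residues—is a tidier bookkeeping of the same argument than the paper's split according to which of the integers $a,\,a+i,\,a+2i$ exceed $n$, and all six sub-assignments are correctly dispatched.
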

\begin{proof}
Suppose that there are natural numbers $a,i < n$ such that
\begin{equation}
  2^a e(3,i) = 2^a ( 2^{2i} + 2^i + 1) \equiv 2^t + 2 + 1 \pmod{2^n-1}.
  \label{eqWelchEquivalence}
\end{equation}
First, we argue that the exponents on the right-hand side are pairwise distinct modulo $n$. Clearly, we cannot have $1 \equiv 0$, or $t \equiv 0$, or $t \equiv 1$ by the hypothesis. The exponents on the left-hand side of \eqref{eqWelchEquivalence} must thus also be distinct modulo $n$ in order for the congruence to hold. By Lemma~\ref{lemUniqueExpansion}, we now have
\begin{equation*}
  \{ 2i+a, i+a, a \} \Mod n = \{ t, 1, 0 \}.
\end{equation*}
We consider several cases depending on which of the three exponents $2i+a$, $i+a$ and $a$ need to be reduced modulo $2^n-1$.

\noindent
\textbf{Case 1:} If $2i+a < n$, then $\{ 2i+a, i+a, a \} = \{ t, 1, 0 \}$. Thus, $a=0,i=1,t=2$.

\noindent
\textbf{Case 2:} Let $i + a < n$ and $2i + a \ge n$. Write $k = 2i + a - n$. We thus have $\{ k, a+i, a \} \Mod n = \{ t, 1, 0 \}$. We cannot have $a + i = 0$ since this implies $a = i = 0$ and leads to a contradiction. We consider two sub-cases:
\begin{enumerate}
  \item if $k = 0$, then we have $a + 2i = n$. We then have $\{ a+i, a \} \Mod n = \{ t, 1 \}$.
  
  If $a + i = t$ and $a = 1$, then from $k = 2i + a - n = 0$, we get $2i + 1 - 2t - 1 = 0$, i.e. $i = t$. Then $a + i = t$ implies $a \equiv 0$, which contradicts $a = 1$.
  
  If $a + i = 1$ and $t = a$, then $k = 2i + a - 2t - 1 = 0$ implies $i = 2t$. Then $a + i = 1$ implies $3t = 1$, which leads to $t = 2$, i.e. $n = 5$.
  
  \item if $k\neq 0$, then $a = 0$, so we have $\{ k, a + i \} = \{ k, i \} = \{ t, 1 \}$. 
  
   We have $k = 2i - 2t - 1$, i.e. $2t + k = 2i - 1$. If $k = t$ and $i = 1$, this means that $3t = 1$ as in the previous sub-case. If $k = 1$ and $i = t$, then we get $1 = -1$.
\end{enumerate}

\noindent\textbf{Case 3:} If $i + a \ge n$, then let $k = i + a - n$. We thus have $\{ k + i, k, a \} = \{ t, 1, 0 \}$. Surely, only $k,a$ could be $0$.  
Once again, we split into sub-cases:
\begin{enumerate}
  \item if $k = 0$, then we have $\{ i, a \} = \{ t, 1 \}$. We have two possibilities:
    \begin{enumerate}
      \item if $i = 1$ and $a = t$, then $0 = k = a + i -n = t + 1 - 2t - 1 = -t$, so $t = 0$ and thus $n = 1$;
      \item if $i = t$ and $a = 1$, then we get $0 = k = t - i $, so that $i = t$. But then $k = a + i - n = a + t -2t - 1 = 1 + t -2t - 1 = -t$, and so once again $t = 0$.
    \end{enumerate}
  \item if $a = 0$, then we have $\{ k + i, k \} = \{ t, 1 \}$. We consider two sub-cases:
    \begin{enumerate}
      \item if $k + i = t$ and $k = 1$, then we have $k = i - n = i - 2t - 1 = 1$, and so $i = 2t + 2 = n + 1$, which contradicts the choice of $i$;
      \item if $k + i = 1$ and $k = t$, then $i = 1 - k = 1 - t$, and since $i \ge 0$, i.e. $1 - t \ge 0$, we have $t \le 1$, i.e. $n \le 3$.
    \end{enumerate}
\end{enumerate}
The claim is shown.
\end{proof}

We now concentrate on the inverse Welch exponent. 
\begin{theorem}
  Let $W = 2^t + 2 + 1$ be the Welch exponent for some natural number $t > 2$. Then $W^{-1} \pmod{2^n-1}$ is never congruent to $e(l,k)$ for any $l < n$ and any $k$ over $\F_{2^n}$ with $n = 2t+1$.
  
  \label{propWelch_inv}
\end{theorem}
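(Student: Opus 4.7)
My approach is to convert the cyclotomic equivalence $e(l,k) \equiv 2^a W^{-1} \pmod{2^n-1}$ into the equivalent statement $W \cdot e(l,k) \equiv 2^a \pmod{2^n-1}$ and exploit the fact that the Hamming weight of an integer modulo $2^n-1$ (taken in canonical form in $[0, 2^n-2]$) is invariant under cyclotomic equivalence. Expanding,
\begin{equation*}
    W \cdot e(l,k) = \sum_{j=0}^{l-1} \left( 2^{jk+t} + 2^{jk+1} + 2^{jk} \right),
\end{equation*}
which is a sum of $3l$ powers of two whose exponents form the multiset $\mathcal{M} = \{jk + s \bmod n : 0 \le j < l,\, s \in \{0,1,t\}\}$.

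In the generic sub-case where the $3l$ elements of $\mathcal{M}$ are pairwise distinct modulo $n$, no carry reductions are needed when we pass to the canonical representative modulo $2^n-1$, so the Hamming weight of the left-hand side equals $3l \geq 3$, whereas the right-hand side $2^a$ has weight $1$. This contradicts cyclotomic equivalence (equivalently, it contradicts the uniqueness of the binary expansion encapsulated in Lemma~\ref{lemUniqueExpansion}, since a sum of $3l$ distinct powers of two cannot equal a single power of two modulo $2^n-1$ when $3l > 1$).

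The remaining work handles the degenerate sub-cases in which at least two exponents in $\mathcal{M}$ collide modulo $n$. Such a collision between $j_1 k + s_1$ and $j_2 k + s_2$ with $s_1, s_2 \in \{0, 1, t\}$ occurs exactly when $(j_1 - j_2) k \equiv c \pmod n$ for some $c \in \{0, \pm 1, \pm t, \pm (t-1)\}$. For each collision pattern I would iteratively apply the carry rule $2 \cdot 2^a \equiv 2^{a+1 \bmod n} \pmod{2^n-1}$ to reduce the multiset to one with pairwise distinct exponents, and then re-apply the weight argument to the reduced sum to show that the resulting weight is still at least $2$, contradicting the weight-$1$ right-hand side.

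The main obstacle is the combinatorial blow-up of the case analysis: several collisions can coexist when $l$ is close to $n$ or when $k$ takes a small value (so that several of the residues $\pm k^{-1}, \pm t k^{-1}, \pm (t-1) k^{-1}$ lie in the interval $[0, l)$), and cascading carries can generate secondary overlaps that need to be tracked. To restrict the number of cases I would invoke Lemma~\ref{lemHalfDimension} to assume $k \le (n-1)/2$ up to cyclotomic equivalence, use Observation~\ref{obsBinaryComplement} to replace exponents of large weight by their complements, and exploit the oddness of $n = 2t+1$ to force $\gcd(k,n)$ to be an odd divisor of $n$. The hypothesis $t > 2$ (so $n \ge 7$) removes the small-dimension edge cases, and the remaining finite list should be resolved by direct computation in the spirit of the proof of Theorem~\ref{propWelch}.
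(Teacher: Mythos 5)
Your overall strategy coincides with the paper's: multiply through by $W$ to reduce the claim to $W\cdot e(l,k)\equiv 2^a\pmod{2^n-1}$, expand the left-hand side as $\sum_{j=0}^{l-1}\bigl(2^{jk+t}+2^{jk+1}+2^{jk}\bigr)$, and compare Hamming weights via the uniqueness of the binary expansion. The generic sub-case (all $3l$ exponents pairwise distinct modulo $n$) is handled correctly, but it is the easy part. The genuine gap is that the degenerate sub-cases, which you only describe in outline, are the entire substance of the theorem, and your plan for them does not go through as stated: the collisions do \emph{not} reduce to ``a remaining finite list resolvable by direct computation.'' For $k=1$ the exponent multiset is $\{j,\;j+1,\;j+t : 0\le j\le l-1\}$, in which every residue $1,\dots,l-1$ already occurs at least twice, so cascading carries occur for every $l\ge 2$ and every $t$ --- an infinite two-parameter family. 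Likewise, for every $k>1$ with $t\equiv 0$ or $1\pmod k$ (say $t=ks$ with $1\le s<l-1$), the subsums $\sum_j 2^{jk}$ and $\sum_j 2^{jk+t}$ overlap in $l-s$ positions, again for infinitely many parameter choices. Observation~\ref{obsBinaryComplement}, Lemma~\ref{lemHalfDimension}, and the oddness of $n$ do not shrink these families to anything finite.

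What is actually required --- and what the paper supplies --- is a symbolic computation of the fully compressed binary expansion as a function of $l$, $t$, $k$. For $k=1$ one tracks the telescoping of $(2^l-1)(2^t+2+1)=2^{l+t}+2^{l+1}+2^l-2^t-2-1$ through the sub-cases $l+1<t$, $l+1\in\{t,t+1,t+2\}$, and $l+1\ge t+3$, verifying in each that the resulting weight exceeds $1$ (the last sub-case needs a careful accounting of how the carries propagate and an argument that the gaps between the surviving blocks of exponents prevent further merging). For $k>1$ one exploits the fact that the three exponent families sit in residue classes $0$, $1$, $t$ modulo $k$ to control which terms can merge, and then treats $t\equiv 0\pmod k$ and $t\equiv 1\pmod k$ explicitly. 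Without carrying out this analysis, the assertion that the reduced weight ``is still at least $2$'' is precisely the claim to be proved, not a consequence of the setup.
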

\begin{proof}
We assume that for $n>5$ and so, $t>2$,  there are some positive integers $a<n,k<n,1\leq l< n$  such that
\begin{equation}
\label{welch_inv}
e(l,k) (2^t+2+1)\equiv 2^a \pmod{2^n-1}.
\end{equation}
If $k=1$, we get the congruence
\[
(2^l-1)(2^t+2+1)=2^{l+t}+2^{l+1}+2^l-2^t-2-1\equiv 2^a\pmod{2^n-1}.
\]
The left-hand side of the above congruence can be written as
\begin{equation*}
    S = (2^{l+t} - 2^t) + 2^{l+1} + (2^l - 2^2) + 1,
\end{equation*}
and so we have
\begin{equation}
\label{welch_inv2}
S=2^{l+t-1}+2^{l+t-2}+\cdots+2^t+2^{l+1}+2^{l-1}+\cdots+2^2+1\equiv 2^a\pmod {2^n-1}.
\end{equation}
If $l+1<t$ (so, $l+t-1<n$), we get a contradiction by uniqueness of the binary expansion.   

If $l+1=t, t+1$, then $S=2^{l+t}+2^{l-1}+\cdots+2^2+1$, respectively, $S=2^{l+t}+2^l+2^{l-1}+\cdots+2^2+1$. If $l+1=t+2$, then (note that  $l+t=n$, now) $S =2^{l+t}+2^{t+1}+2^{t+1}+2^{l-2}+\cdots +2^2+1\equiv 2^{t+2}+2^{t-1}+\cdots +2^2+2\pmod {2^n-1}$. None of these values of $S$   modulo $2^n-1$ can have Hamming weight~1.

If $l+1\geq t+3$, arranging the sums to see how the cascading (``merger'' of powers of $2$) works, we obtain (writing $l+1=t+s, 3\leq s$, $t + 1 \leq s$, the upper bound comes from $l<n$)
\[
\begin{array}{llll}
S&=2^{l+t-1}+\cdots &+2^{t+s}+2^{t+s-1}&+2^{t+s-2}+\cdots+2^t\\
& &+2^{l+1} &+2^{l-1}+  \cdots+2^{l+1-s} \\
&\qquad &&+ 2^{t-1}+\cdots+2^2+1\\
&= 2^{l+t}+2^{t+s-1} &+2\left(2^{t+s-2}+\cdots+2^t\right)&+2^{t-1}+\cdots+2^2+1 \\
&=2^{l+t}+2^{t+s}&+2^{t+s-2}+\cdots +2^{t+1}&+2^{t-1}+\cdots+2^2+1\\
&\equiv 2^{l-t-1}+2^{l+1}&+2^{l-1}+\cdots +2^{t+1} &+2^{t-1}+\cdots+2^2+1\!\!\!\!\pmod {2^n-1}, 
\end{array}
\]
where we used above that $l+t\pmod n=l+t-n=l-t-1$, and $t+s=l+1$.
Since $t\geq 1$, $l-t-1$ falls either in the set of indices   $\{0\}$, $\{2,\ldots, t-1\}$, or $\{t+1,\ldots,l-1\}$ and since the gaps between these sets are of length $2$, the cascading compression cannot jump into a different set.
Thus,
the expression $S\pmod{2^n-1}$  cannot have Hamming weight~1.

We now take $1<k<n$. Equation~\eqref{welch_inv} is equivalent to
\begin{equation}
    \label{eqB}
B=\sum_{i=0}^{l-1} 2^{ki+t}+\sum_{i=0}^{l-1} 2^{ki+1}+\sum_{i=0}^{l-1} 2^{ki}\equiv 2^a \pmod{2^n-1}.
\end{equation}
If $k(l-1)+t<n$ and $t\not\equiv 0,1\pmod k$, we get a contradiction by the uniqueness of the binary expansion, since the exponents are sitting in different residue classes modulo~$k$. We next assume that $k(l-1)+t<n$, $t\equiv 0\pmod k$, say $t=ks$, $s\geq 1$. Since $k(l-1)+t<n=2t+1$, that is, $k(l-1)<ks+1$, then $s>l-1$, and so, there is no compression in~\eqref{eqB}, and the claim follows via the uniqueness of the binary decomposition.
The case of $k(l-1)+t<n$, $t\equiv 1\pmod k$ follows similarly.

We now let $k(l-1)+t\geq n$, that is, $k(l-1)\geq t+1$. In the same way as above, if $t\not\equiv 0,1\pmod k$, reducing all exponents modulo $n$, we see that they cannot overlap (hence no compression in the sum) since they belong to different residue classes modulo~$k$. We now take $t\equiv 0\pmod k$ (one treats similarly $t\equiv 1\pmod k$), $t=ks, s\geq 1$ and, since $k(l-1)\geq t+1=ks+1$, then $1\leq s< l-1$ . Thus, recalling that $a \Mod N$ denotes the least positive residue of $a$ modulo $N$,
\allowdisplaybreaks
\begin{align*}
B&\equiv 2^{k(l-1+s) \Mod n}+\cdots+2^{kl \Mod n} +\sum_{i=s}^{l-1} 2^{ki+1\Mod n}\\
&\qquad\qquad + 1+2^k+\cdots+2^{k(s-1) \Mod n} +\sum_{i=0}^{l-1} 2^{ki+1\Mod n}\\
&\equiv  2^{k(l-1+s) \Mod n}+\cdots+2^{kl \Mod n}+\sum_{i=s}^{l-1} 2^{ki+2\Mod n}\\
&\qquad\qquad +2^{k(s-1) (\Mod n)}+\cdots+2^k+1+\sum_{i=0}^{s-1} 2^{ki+1\Mod n},
\end{align*}
and again working modulo $k$, we see that this cannot have Hamming weight~1.

Thus, the result holds.
\end{proof}

To conclude the discussion, we observe that for $t = 1$ the Welch exponent is $2^t + 3 = 5$, and can be expressed as $e(2,2)$, while for $t = 2$, the Welch exponent is $2^t + 3 = 7$. Its inverse, $7^{-1} \equiv 9 \pmod{2^5-1}$, can be expressed as $e(2,3)$. As the above theorems demonstrate, these are the only cases in which the Welch exponent can be expressed as $e(l,k)$.
  
\subsection{Kasami case}

The Kasami exponents on $\F_{2^n}$ are defined as $2^{2t} - 2^t + 1$ with $\gcd(t,n) = 1$. The algebraic degree of the Kasami exponent is known (and easily shown) to be~$t+1$.

If $\gcd(i,n) = 1$, this means that we need to consider $l = t + 1$. Since the Kasami exponents are defined for both even and odd $n$, it is possible that we have $\gcd(i,n) = 2$ in addition to $\gcd(i,n) = 1$. According to Lemma \ref{lem:gcd2}, however, even if $\gcd(i,n) = 2$, it is still sufficient to consider $l = t + 1$ due to $t < n/2$ which we can always assume up to equivalence.

\begin{theorem}
  Let $t,n$ be natural numbers such that $t > 2$, $\gcd(t,n) = 1$ and let $K_t = 2^{2t} - 2^t + 1$ be the Kasami exponent for $t < n/2$. Then $K_t$ is never congruent to $e(t+1,i)$ modulo $2^n-1$ for any $i < n$ such that $\gcd(i,n) = 1$.
  
  \label{propKasami}
\end{theorem}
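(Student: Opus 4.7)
The plan is to follow the template of the Welch case (Theorem~\ref{propWelch}): suppose for contradiction that $2^a K_t \equiv e(t+1,i)\pmod{2^n-1}$ for some natural $a$ and some $i$ with $\gcd(i,n)=1$, and then derive a combinatorial contradiction via Lemma~\ref{lemUniqueExpansion}. Writing $K_t=1+2^t+2^{t+1}+\cdots+2^{2t-1}$, the two exponent sets are
\[
E_K=\{a,\,a+t,\,a+t+1,\,\ldots,\,a+2t-1\}\quad\text{and}\quad E_L=\{0,i,2i,\ldots,ti\},
\]
both of size $t+1$ and both pairwise distinct modulo $n$ (for $E_K$ this uses $t<n/2$, so $a\equiv a+j\pmod n$ with $t\le j\le 2t-1$ is impossible; for $E_L$ it uses $\gcd(i,n)=1$ together with $t<n$). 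Lemma~\ref{lemUniqueExpansion} would then force $L:=\{0,i,2i,\ldots,ti\}\Mod n$ to be a translate of $R:=\{0,t,t+1,\ldots,2t-1\}$ in $\Z/n\Z$.

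The core step would be to rule out this translation for $t\ge 3$. Since $R$ contains the run $\{t,t+1,\ldots,2t-1\}$ of $t$ cyclically consecutive integers, the same must hold for $L$: there exist $b$ and indices $j_s\in[0,t]$ with $j_s i\equiv b+s\pmod n$ for $s=0,\ldots,t-1$. From $(j_{s+1}-j_s)i\equiv 1\pmod n$ I would deduce that, writing $m=i^{-1}\Mod n$, the integer difference $j_{s+1}-j_s$ is the unique representative of $m$ lying in $[-t,t]$, so $m\in[1,t]\cup[n-t,n-1]$. If $m\in[2,t]$, the requirement $j_0,j_0+m,\ldots,j_0+(t-1)m\in[0,t]$ yields $(t-1)m\le t$, which is impossible for $t\ge 3$ and $m\ge 2$; the symmetric interval $m\in[n-t,n-2]$ (i.e.\ negative step of magnitude $n-m\ge 2$) is excluded identically. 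If $m\in[t+1,n-t-1]$, no representative of $m$ lies in $[-t,t]\setminus\{0\}$, so $L$ contains no consecutive pair at all, contradicting the existence of $t\ge 2$ consecutive integers. Hence $m\in\{1,n-1\}$, i.e.\ $i\in\{1,n-1\}$.

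The last two cases would be dispatched by comparing circular-gap multisets: both $L=\{0,1,\ldots,t\}$ (if $i=1$) and $L=\{0\}\cup\{n-t,\ldots,n-1\}$ (if $i=n-1$) have gap multiset $\{1,1,\ldots,1,n-t\}$ with $t$ copies of $1$, while $R$ has gap multiset $\{t,1,1,\ldots,1,n-2t+1\}$ with $t-1$ copies of $1$. Since $t\ge 3$ and $\gcd(t,n)=1$ excludes $n=2t$, the two elements $t$ and $n-2t+1$ both exceed $1$ and cannot collapse into the single element $n-t$ together with one additional $1$, so the multisets differ. The main obstacle I anticipate is the bookkeeping in the case analysis on $m=i^{-1}\Mod n$; what makes it tractable is the tight constraint that the integer index difference $j_{s+1}-j_s$ has absolute value at most $t$, which for $t\ge 3$ already forces $m\in\{1,n-1\}$ before any finer structural argument is invoked.
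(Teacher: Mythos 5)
Your proposal is correct, and it takes a genuinely cleaner route than the paper's proof. The paper keeps $2^aK_t$ in the form $2^{2t+a}-2^{t+a}+2^a$ and therefore has to split into three cases (plus sub-cases) according to which of $2t+a$, $t+a$, $a$ exceed $n$, converting the subtraction into a binary expansion separately in each case (using Observation~\ref{obsBinaryComplement} when wrap-around occurs) and then, in each resulting configuration, running a local version of your difference argument: choose $\alpha,\beta\in[0,t]$ with $\alpha i,\beta i$ hitting two prescribed elements of the exponent set and observe that one of $\pm(\alpha-\beta)i$ must land back in the set. You instead expand $K_t=1+2^t+\cdots+2^{2t-1}$ once and for all, so the left-hand exponent set is the fixed translate $\{a,a+t,\dots,a+2t-1\}$ of $R=\{0,t,\dots,2t-1\}$ and its elements are pairwise distinct modulo $n$ by $2t<n$ alone; this collapses the entire case analysis into the single set identity $\{0,i,\dots,ti\}\equiv R+a\pmod n$, which you then refute by two translation-invariant observations (the length-$t$ run forces the step $i^{-1}$ to be $\pm1$, and the circular-gap multisets of $\{0,1,\dots,t\}$ and $R$ disagree since $R$ has only $t-1$ unit gaps while $n>2t$ keeps $t$ and $n-2t+1$ above $1$). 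The paper's Case~1 argument is essentially the single-pair instance of your run argument, so the mathematical content overlaps, but your systematization eliminates the sub-case bookkeeping in Cases~2 and~3 entirely; the only thing the paper's formulation buys is that its modulate-the-difference template carries over verbatim to the inverse-exponent theorems, where the minuend structure genuinely matters.
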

\begin{proof}
Let us assume that there is some $a < n$ such that 
\begin{equation}
    \label{eqKasamiCongruence}
    2^a \cdot (2^{2t} - 2^t + 1) \equiv e(t+1,i)
\end{equation}
for some $i < n$. Depending on which of the exponents in $\{ 2t + a, t + a, a \}$ need to be modulated, we examine several cases, applying Lemma \ref{lemUniqueExpansion} in each case.

\noindent\textbf{Case 1:} If $2t + a < n$, then all exponents are already less than $n$. The exponent on the left-hand side of \eqref{eqKasamiCongruence} modulo $2^n-1$ is thus simply $2^{2t+a} - 2^{t+a} + 2^a$, which can be expressed as
\begin{equation*}
  \left( \sum_{j = a+t}^{2t+a-1} 2^j \right) + 2^a.
\end{equation*}
The set of exponents of this expression is
\begin{equation*}
  A = \{ a \} \cup \{ a + t, a + t + 1, \dots, 2t + a - 2, 2t + a - 1 \}.
\end{equation*}
The set of exponents of $e(t+1,i) = \sum_{j = 0}^t 2^{ji}$ is
\begin{equation*}
  B = \{ 0, i, 2i, \dots, (t-1)i, ti \} \Mod{n}.
\end{equation*}
Note that all elements in $B$ must be distinct modulo $n$ due to $\gcd(i,n) = 1$, and so we can apply Lemma~\ref{lemUniqueExpansion}. Since $0 \in B$, we must also have $0 \in A$. If $0 = a + t + j$ for some $j > 0$, then we get a contradiction because we would have $a + t < 0$. If $0 = a + t$, then $a = t = 0$, but the Kasami function is only defined for $t > 0$. Thus, we must have $a = 0$ and $t \ne 0$. The set $A$ then becomes
\begin{equation*}
  A = \{ 0 \} \cup \{ t, t+1, \dots, 2t-2, 2t-1 \}.
\end{equation*}

There must be $\alpha, \beta$ in $0 \le \alpha, \beta \le t$ such that $\alpha i \equiv t \pmod{n}$ and $\beta i \equiv t+1 \pmod{n}$. Then either $\alpha - \beta$ or $\beta - \alpha$ is in the range between $0$ and $t$, so either $(\alpha - \beta)i \Mod n$ or $(\beta - \alpha)i \Mod n$ belongs to $A$. But $(\beta - \alpha)i \equiv 1 \notin A$, and so we must have $(\alpha - \beta)i \equiv -1 = n-1 \in A$, which is impossible under $t < n/2$.

\noindent\textbf{Case 2:} If $t + a < n \le 2t + a$, then let $k = 2t + a - n$. We must have $k < t + a$, otherwise $k = 2t + a -n \ge t + a$, hence $t - n \ge 0$, i.e. $t \ge n$. Similarly, we can assume $k < a$ under $t \le n/2$. Then $k < a < t+a$. Using Observation~\ref{obsBinaryComplement}, we have that the Kasami exponent becomes
\begin{equation*}
  \begin{split}
  -2^{t+a} + 2^a + 2^k = -(2^{t+a} - 2^a) + 2^k = 2^k + \sum_{j = 0}^{a-1} 2^j + \sum_{j = t+a}^{n-1} 2^j = \\
  \left( \sum_{j = 0}^{k-1} 2^j \right) + 2^a + \left( \sum_{j = t+a}^{n-1} 2^j \right),
  \end{split}
\end{equation*}
which gives the set of exponents
\begin{equation*}
  A = \{ 0,1,2,\dots,k-1\} \cup \{a\} \cup \{t+a,t+a+1,\dots,n-1\}.
\end{equation*}
The set of exponents of $e(t+1,i)$ is still the same as before, i.e.
\begin{equation*}
  B = \{ 0, i, 2i, \dots, (t-1)i, ti \} \Mod{n}.
\end{equation*}

We must have $\alpha, \beta$ in $0 \le \alpha, \beta \le t$ such that $\alpha i \equiv t+a \pmod{n}$ and $\beta i \equiv a \pmod{n}$. Then either $\alpha - \beta$ or $\beta - \alpha$ is in the range $\{ 0, 1, \ldots, t \}$, and so we must have either $t \in A$ or $-t \equiv n - t \in A$.

If $t \in A$, then we must have $t = a$ since $t \le k-1$ means $t \le 2t + a - n - 1$, i.e. $t + a \ge n + 1$ which contradicts $t + a < n$; and $t \ge t + a$ implies $a = 0$ which contradicts that same assumption. If $a = t$, then the set of exponents becomes
\begin{equation*}
    A = \{ 0, 1, \ldots, k-1 \} \cup \{ t \} \cup \{ 2t, 2t+1, \ldots, n-1 \}.
\end{equation*}
Now, take some $\gamma, \delta$ such that $0 \le \gamma, \delta \le t$ and $\gamma i \equiv 1 \pmod{n}$ and $\delta i \equiv t \pmod{n}$. Then either $t - 1$ or $n + 1 - t$ must be in $A$, and we can verify that this is impossible: if $t - 1 \le k - 1$, then $t \le 3t - n$, i.e. $2t \ge n$ contradicting $t < n/2$; if $t - 1 = t$ or $t - 1 \ge 2t$, then we immediately get a contradiction as well. Similarly, $n + 1 - t \le k - 1$ means $n + 1 - t \le 3t - 1$, i.e. $4t \ge 2n + 2$ contradicting the choice of $t$; $n + 1 - t = t$ leads to $2t = n + 1$, violating that same hypothesis; and $n + 1 - t \ge t + a$ means $n \ge 3t - 1$, but together with $3t \ge n$ and $\gcd(t,n) = 1$ this leads to $3t = n + 1$ so that $k = 3t - n = 1$. Thus $A = \{ 0 \} \cup \{ t \} \cup \{ 2t, 2t + 1, \ldots, 3t - 2 \}$. Then taking $\varepsilon i \equiv 3t - 2 \pmod{n}$, we have either $(3t - 2) - t = 2t - 2 \in A$, or $t - (3t - 2) = n - 2t + 2 \in A$. The former case is clearly impossible except potentially for trivial values of $t$. In the latter case, we must have $n + 2 - 2t \ge 2t$, i.e. $n + 2 = 3t + 1 \ge 4t$, which again implies a contradiction.

If $n - t \in A$, we derive a contradiction in a similar manner.

\noindent\textbf{Case 3:} If $t + a > n$, let $k = t + a - n$. Then
\begin{equation*}
  2^{2t+a} - 2^{t+a} + 2^a \equiv 2^{k+a} - 2^k + 2^a \pmod{2^n-1}.
\end{equation*}
Note that $k < a$ since otherwise we get $t + a - n \ge a$, i.e. $t \ge n$. We need to examine two sub-cases depending on whether $k+a$ needs to be modulated or not.

\noindent\textbf{Case 3-1:} If $k+a < n$, then the above becomes
\begin{equation*}
  2^{k+a} + 2^a - 2^k = \left( \sum_{j = k}^{a-1} 2^j \right) + 2^{k+a},
\end{equation*}
giving the set of exponents
\begin{equation*}
  A = \{ k, k+1, \dots, a-1 \} \cup \{ k + a \}
\end{equation*}
which has to be congruent with
\begin{equation*}
  B = \{ ti, (t-1)i, \dots, 2i, i, 0 \} \Mod n.
\end{equation*}
As before, if $0 = k+j$ for some $j > 0$, then we immediately get a contradiction due to $k,j \ge 0$. So $k = 0$, i.e. $t + a = n$. The above sets become
\begin{equation*}
  A = \{ 0, 1, 2, \dots, a-1, a \}
\end{equation*}
and
\begin{equation*}
  B = \{ ti, (t-1)i, \dots, 2i, i, 0 \} \Mod n.
\end{equation*}
Clearly, we must have $a = t$ in order for equality to hold (by comparing the number of terms), but then $t + a > n$ and $k + a < n$ cannot hold simultaneously.

\noindent\textbf{Case 3-2:} If $k + a \ge n$, then let $q = k + a - n = t + 2(a-n)$. We have $q < k$ since if $q \ge k$, then $k + a - n \ge k$, i.e. $a \ge n$. Similarly, $k < a$ since if $k \ge a$ then $t + a -n \ge a$, i.e. $t \ge n$. Thus we have $q < k < a$. The Kasami exponent is thus 
\begin{equation*}
  2^a - 2^k + 2^q = \left( \sum_{j = k}^{a-1} 2^j \right) + 2^q.
\end{equation*}
The set of exponents is
\begin{equation*}
  A = \{ q \} \cup \{k, k+1, k+2, \dots, a-1 \}.
\end{equation*}
Since $0 \in A$ due to $0 \in B$, the only possibility is $q = 0$, i.e. $k + a = n$. Comparing the sizes of $A$ and $B$, we get $a - 1 - k + 1 + 1 = t + 1$, i.e. $a - k =t$, hence $n = 2t$, which contradicts $\gcd(t,n) = 1$.

This concludes the proof of this case.
\end{proof}

While the inverse of the Kasami power function is known~(see~\cite[Theorem 3.10]{KS14}, as well as~\cite{K20}, for further clarification), it seems complicated to investigate its cyclotomic equivalence to some $e(l,k)$, since the inverse formula depends upon the inverse of the Kasami exponent $2^{2t} - 2^t + 1$ modulo $2^r-1$, where $r$ is the least positive residue of $n$ modulo $6k$, and that is not explicit.  

However, we find a way around that and are able to show the following theorem. 
\begin{theorem}
\label{propKasami_inv}
The inverse of the Kasami exponent $K_t=2^{2t}+2^{t}-1$ is cyclotomic equivalent to $e(l,i)$ over $\F_{2^n}$, $\gcd(n,t)=1$, if and only if $t=1$, or $t=n-1$, and this happens for $(l,i)=e(2, \frac{n+1}{2})$ (hence $K_1^{-1}$ and $K_{n-1}^{-1}$ are in the same cyclotomic class).  
\end{theorem}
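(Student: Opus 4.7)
My plan is to prove the two implications of the biconditional separately.

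For the ``if'' direction, both $t=1$ and $t=n-1$ reduce to the Gold case. When $t=1$, we have $K_1 = 2^2 - 2 + 1 = 3 = 2+1$, the Gold exponent with $r=1$, whose inverse over $\F_{2^n}$ is the Gold-inverse exponent recalled via the Nyberg formula in Section~4. When $t=n-1$, reducing modulo $2^n-1$ via $2^{2(n-1)} \equiv 2^{n-2}$ yields
\begin{equation*}
    K_{n-1} \equiv 2^{n-2} - 2^{n-1} + 1 \equiv 3 \cdot 2^{n-2} \pmod{2^n-1},
\end{equation*}
so $K_{n-1}$ sits in the cyclotomic coset of $K_1 = 3$; taking inverses, $K_{n-1}^{-1}$ and $K_1^{-1}$ share a cyclotomic coset.

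For the ``only if'' direction, I would assume $e(l,i)$ is cyclotomic equivalent to $K_t^{-1}$ with $\gcd(n,t)=1$ and show that $t \in \{1,n-1\}$. By the definition of cyclotomic equivalence, the equivalence takes one of two forms: either (a) $2^a\, e(l,i)\, K_t \equiv 1 \pmod{2^n-1}$ for some $a$, or (b) $e(l,i)$ lies in the cyclotomic coset of $K_t$. Case~(b) is excluded by Theorem~\ref{propKasami} for $2<t<n/2$; the symmetry $K_t \equiv 2^{2t} K_{n-t} \pmod{2^n-1}$ (which is straightforward to verify by direct reduction) then extends the exclusion to $n/2<t<n-2$, leaving only a handful of small residual values of $t$ to be checked by a short weight argument.

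For case~(a), the key algebraic identity is $K_t(2^t+1) = 2^{3t}+1$. Multiplying the congruence in (a) by $(2^t+1)$ yields
\begin{equation*}
    \sum_{j=0}^{l-1} 2^{ji+3t+a} + \sum_{j=0}^{l-1} 2^{ji+a} \equiv 2^t + 1 \pmod{2^n-1}.
\end{equation*}
The right-hand side has Hamming weight~$2$, while the left-hand side has $2l$ powers of~$2$, so extensive collisions modulo $n$ must occur. I would then apply Lemma~\ref{lemUniqueExpansion} after systematically cataloging these collisions, which, when $\gcd(i,n)=1$, are governed by the unique $m$ satisfying $3t\equiv mi \pmod{n}$, and when $\gcd(i,n)=2$, by the folded exponent pattern from Lemma~\ref{lem:gcd2}. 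The resulting analysis is set up to collapse to the constraint $3t \equiv \pm t \pmod{n}$, which together with $\gcd(n,t)=1$ forces $t \in \{1, n-1\}$.

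The main obstacle will be the bookkeeping in case~(a): the cascading binary compressions triggered by multiple collisions can merge exponents in non-obvious ways, so one must split into sub-cases based on the overlap parameter $m$ and on whether individual exponents $ji+3t+a$ exceed $n$ before reduction, in the spirit of (but somewhat lengthier than) the proof of Theorem~\ref{propWelch_inv}. The $\gcd(i,n)=2$ sub-case is the most delicate because Lemma~\ref{lem:gcd2} allows $l$ to range up to $3n/2$, which correspondingly lengthens the collision enumeration.
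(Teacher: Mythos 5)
Your ``if'' direction is fine and matches the paper: $t=1$ is the Gold case, and $K_{n-1}\equiv 3\cdot 2^{n-2}\pmod{2^n-1}$ puts $K_{n-1}$ in the coset of $K_1$, with the explicit witness $e\bigl(\tfrac{n+1}{2},2\bigr)$. For the ``only if'' direction you take a genuinely different algebraic route: the paper multiplies the congruence $K_t\, e(l,i)\equiv 2^a$ by $2^i-1$, producing the \emph{bounded} four-term-versus-four-term identity $2^{2t+li}+2^{li}+2^t+2^a\equiv 2^{2t}+2^{li+t}+2^{a+i}+1$, to which Lemma~\ref{lemUniqueExpansion} applies directly once the (many, but finitely enumerable) degenerate coincidences among $\{2t+li,li,t,a\}$ and $\{2t,li+t,a+i,0\}$ are treated. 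Your multiplication by $2^t+1$ instead yields a $2l$-term sum that must compress to weight $2$, which is closer in spirit to the paper's Welch-inverse argument; it is not unreasonable, but it trades a finite case catalogue for an unbounded cascading-compression analysis that you have entirely deferred. (A minor point: the $\gcd(i,n)=2$ sub-case you worry about is vacuous here, since $K_t^{-1}$ exists modulo $2^n-1$ only for odd $n$.)

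The concrete gap is your claimed terminal constraint. You assert the analysis ``collapses to $3t\equiv\pm t\pmod{n}$, which together with $\gcd(n,t)=1$ forces $t\in\{1,n-1\}$.'' This is false on both counts: with $n$ odd, $3t\equiv t\pmod{n}$ gives $n\mid 2t$, hence $n\mid 2$, and $3t\equiv -t\pmod n$ gives $n\mid 4t$, hence $n\mid 4$; neither yields $t\equiv\pm1\pmod n$. Worse, if the analysis really terminated in $3t\equiv\pm t$, it would exclude \emph{every} admissible $t$ for $n>4$ --- contradicting the $t=n-1$ solution you yourself exhibit in the ``if'' direction (there $3t\equiv-3\pmod n$, which satisfies neither congruence). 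So the proposed endpoint cannot be what the collision analysis produces; the constraints that must actually emerge are of the form $t\equiv\pm1\pmod{n}$, as in the paper's case-by-case treatment. Until the compression argument is carried out and shown to land on $t\equiv\pm1$, the ``only if'' direction is not established.
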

\begin{proof}
Suppose that the Kasami exponent $K_t=(2^{2t} - 2^t + 1)$  satisfies $(2^{2t} - 2^t + 1)e(l,i) \equiv 2^a \pmod{2^n-1}$ for some $a,l,i,n$ satisfying the appropriate hypotheses. This means that
\begin{equation}
\label{eq:invKasami}
    \frac{2^{li} - 1}{2^i - 1} (2^{2t} - 2^t + 1) \equiv 2^a \pmod{2^n-1}.
\end{equation}
We first make some interesting observations. If $t=1$, the Kasami function is also a Gold function, and we have already treated that case. If $t=n-1$ (which is coprime to $n$),  we first observe that the Kasami function is then $2^{2t}-2^t+1=2^{2n-2}-2^{n-1}+1\equiv 2^{n-2}-2^{n-1}+1\equiv -2^{n-2}\equiv 3\cdot 2^{n-2}\pmod {2^n-1}$. Thus,  taking $i=2$, $l=\frac{n+1}{2},a=n-2$,
Equation~\eqref{eq:invKasami} becomes 
\[
3\cdot 2^{n-2}  \frac{2^{n+1} - 1}{2^2 - 1}=
2^{n-2}\equiv 2^{n-2}\pmod {2^n-1}.
\]
Thus, the inverse of $K_{n-1}$ is in the cyclotomic class of $e\left(\frac{n+1}{2},2\right)$.

We therefore assume that $1<t<n-1$.
Multiplying both sides of~\eqref{eq:invKasami} by $2^i - 1$  and regrouping the terms on both sides yields
\begin{equation}
\label{eq:Case2_invKasami}
    2^{2t + li} + 2^{li} + 2^t + 2^a \equiv 2^{2t} + 2^{li + t} + 2^{a+i} + 1 \pmod{2^n-1}.
\end{equation}

We will apply Lemma~\ref{lemUniqueExpansion} to the exponents on the left-hand and right-hand side of the above identity. In order to do so, we first need to treat the cases when one or more of the integers in the exponent sets $$A = \{ 2t + li, li, t, a \}$$ and $$B = \{ 2t, li+t, a+i, 0 \}$$ are congruent to each other modulo $n$.

We consider the elements in $B = \{ 2t, li + t, a + i, 0 \}$. If not all of them are distinct modulo $n$, then we must be in one of the following cases.

\noindent\textbf{Case 1:} If $0 \equiv 2t \pmod{n}$, then $t \equiv 0 \pmod{n}$ due to $n$ being odd, which contradicts $2^{2t} - 2^t + 1$ being APN.

\noindent\textbf{Case 2:} If $li + t \equiv 0 \pmod{n}$, then the integers in $A$ and $B$ become (after compression)
\begin{align*}
    A = \{ t+1, -t, a \},\
    B = \{ 2t, a+i, 1 \}.
\end{align*}

We now consider several cases depending on whether the elements in the reduced set $A$ are congruent to each other modulo $n$ or not.

\noindent\textbf{Case 2.1:} If all the elements in $A$ are distinct modulo $n$, then the same must be true for those in $B$. We can now apply Lemma~\ref{lemUniqueExpansion}, which implies that $A \equiv B \pmod{n}$. Thus, $t + 1$ must be congruent to one of the elements in $B$. 
Surely, since $1<t<n-1$, then  $t + 1 \not\equiv 1 \pmod{n}$. 
If $t + 1 \equiv a+i \pmod{n}$, then we must have $-t \equiv 1 \pmod{n}$ (since $-t \equiv 2t \pmod{n}$ immediately implies $t = 0$), i.e. $t \equiv -1 \pmod{n}$.
If $t + 1 \equiv 2t \pmod{n}$, then we immediately get $t \equiv 1 \pmod{n}$.

\noindent\textbf{Case 2.2:} Suppose now that $t + 1 \equiv -t \pmod{n}$. Then the sets $A$ and $B$ become
\begin{align*}
    A = \{ -t, -t, a \} = \{ -t + 1, a \},\
    B = \{ 2t, a+i, 1 \} = \{ -1, a+i, 1 \}.
\end{align*}
Since $A$ consists of at most two distinct powers of $2$ modulo $n$, then at least two of the integers in $B$ must be congruent to each other as well.
\begin{itemize}
    \item
Clearly, $-1 \equiv 1 \pmod{n}$, i.e. $2 \equiv 0 \pmod{n}$ is impossible unless $n \le 2$.
\item
If $a + i \equiv 1 \pmod{n}$, then we have $A = \{ 1 - t, a \}$ and $B = \{ -1, 2 \}$. If the two elements in $B$ are congruent to each other, i.e. $-1 \equiv 2 \pmod{n}$, then we get $3 \mid n$, and so $n \le 3$. Otherwise, we apply Lemma~\ref{lemUniqueExpansion} and equate the two sets modulo $n$. If $1 - t \equiv 2 \pmod{n}$, then we immediately get $t \equiv - 1 \pmod{n}$, i.e. $t = n-1$, which is one of the cases where the Kasami inverse can be equivalent to $e(l,i)$; whereas, if $1 - t \equiv -1 \pmod{n}$ and $a \equiv 2 \pmod{n}$, then we get $t = 2$, and from $2t \equiv -1 \pmod{n}$, we have $5 \equiv 0 \pmod{n}$, implying $n \le 5$.
\item
If $a + i \equiv -1 \pmod{n}$, then we get $A = \{ 1 - t, a \}$ and $B = \{ 0, 1 \}$. The elements in $B$ are clearly not congruent modulo $n$, and so we must have $a = 1$ and $1 - t \equiv 0 \pmod{n}$, i.e. $t = 1$. This leads us to the Gold case once again.
\end{itemize}

\noindent\textbf{Case 2.3:} Suppose now that $t + 1 \equiv a \pmod{n}$. The sets $A$ and $B$ become
\begin{align*}
    A = \{ a+1, -t \},\
    B = \{ 2a-2, a + i, 1 \}.
\end{align*}

Once again, since $A$ contains at most two distinct elements modulo $n$, then the same must be true for $B$.
\begin{itemize}
    \item
If $2a - 2 \equiv a + i \pmod{n}$, then we get $a \equiv i + 2 \pmod{n}$, i.e. $i \equiv a-2 \pmod{n}$. Now, $A = \{ a + 1, -t \}$ and $B = \{ a + i + 1, 1 \}$. If the two integers in $A$ are congruent to each other, then the same must be true for the ones in $B$, so that $a + 1 \equiv -t \pmod{n}$ and $a + i \equiv 0 \pmod{n}$, hence $2a - 2 \equiv 0 \pmod{n}$, i.e. $a \equiv 1 \pmod{n}$ and therefore $t \equiv 0 \pmod{n}$ from $t + 1 \equiv a \pmod{n}$. Otherwise, by Lemma~\ref{lemUniqueExpansion}, either $t \equiv -1 \pmod{n}$, or $a + 1 \equiv 1 \pmod{n}$, which from $t + 1 \equiv a \pmod{n}$ implies the same.

\item 
If $2a - 2 \equiv 1 \pmod{n}$, then together with $t + 1 \equiv a \pmod{n}$, we get $2t + 2 \equiv 2a \pmod{n}$ and so $2t \equiv 1 \pmod{n}$, hence $t = \frac{n+1}{2}$. The two sets are $A = \{ a + 1, -t \}$ and $B = \{ 2, a+i \}$. If $a + 1 \equiv -t \pmod{n}$ and $2 \equiv a+i \pmod{n}$, then $a \equiv -1 -t \equiv 1 + t \pmod{n}$, and so $t \equiv -1 \pmod{n}$. Otherwise, we apply Lemma~\ref{lemUniqueExpansion}. If $a + 1 \equiv 2 \pmod{n}$, i.e. $a \equiv 1 \pmod{n}$, then $2a - 2 \equiv 1 \pmod{n}$ implies $1 \equiv 0 \pmod{n}$. If $a + 1 \equiv a + i \pmod{n}$ and $-t \equiv 2 \pmod{n}$, then $i \equiv 1$ and $t \equiv -2 \pmod{n}$. From $t + 1 \equiv a \pmod{n}$, we can derive $a \equiv -1 \pmod{n}$, and substituting this into $2a - 2 \equiv 1 \pmod{n}$ yields $5 \equiv 0 \pmod{n}$, i.e. $n \le 5$.

\item
If $a + i \equiv 1 \pmod{n}$, then the sets are $A = \{ a + 1, -t \} = \{ a + 1, 1 -a \}$ and $B = \{ 2a - 2, 2 \}$. If $a + 1 \equiv 1 -a \pmod{n}$, then we get $a \equiv 0 \pmod{n}$ and hence $t \equiv -1 \pmod{n}$. If $a + 1 \equiv 2a - 2 \pmod{n}$ and $1-a \equiv 2 \pmod{n}$, then $a \equiv - 1 \pmod{n}$, and so $0 \equiv 4 \pmod{n}$. If $a + 1 \equiv 2 \pmod{n}$ and $1 - a \equiv 2a - 2 \pmod{n}$, then we get $a = 1$ and so $t = a - 1 = 0$, which cannot possibly happen.
\end{itemize}

\noindent\textbf{Case 2.4:} If $-t \equiv a \pmod{n}$, then the sets become
\begin{align*}
    A = \{ t + 1, 1 - t \} = \{ t + 1, a + 1 \},\
    B = \{ 2t, i-t, 1 \}.
\end{align*}
At least two of the elements in $B$ must be congruent to each other.

\begin{itemize}
    \item
If $2t \equiv i-t \pmod{n}$, then $B$ becomes $B = \{ 2t+1, 1 \}$. Clearly, we cannot have $2t + 1 \equiv 1 \pmod{n}$, and so an application of Lemma~\ref{lemUniqueExpansion} yields $t + 1 \equiv 2t + 1 \pmod{n}$ or $t + 1 \equiv 1 \pmod{n}$; in both cases, we get $t \equiv 0 \pmod{n}$, which cannot happen.

\item
If $2t \equiv 1 \pmod{n}$, then $B$ becomes $B = \{ 2, i-t \}$. If the elements in $A$ are not distinct, we immediately get $t \equiv -t \pmod{n}$. Otherwise, $t + 1 \equiv 2 \pmod{n}$ yields $t \equiv 1 \pmod{n}$, while if $t + 1 \equiv i-t \pmod{n}$ and $a + 1 \equiv 2 \pmod{n}$, then $a \equiv 1 \pmod{n}$ and $-t \equiv a \pmod{n}$ imply $t \equiv -1 \pmod{n}$.

\item
Finally, if $i-t \equiv 1 \pmod{n}$, then we get $A = \{ t + 1, a + 1 \}$ and $B = \{ 2t, 2 \}$. If $2t \equiv 2 \pmod{n}$, then $t = 1$; otherwise, by Lemma~\ref{lemUniqueExpansion} we must have either $t + 1 \equiv 2t \pmod{n}$, i.e. $t = 1$; or $t + 1 \equiv 2 \pmod{n}$, i.e. $t = 1$. All of these imply the Gold case which we have already handled.
\end{itemize}

\noindent\textbf{Case 3:} If $0 \equiv a + i \pmod{n}$, i.e. $a \equiv -i \pmod{n}$, then we have
\begin{align*}
    A = \{ 2t - la, t, -la, a \},\
    B = \{ 2t, t - la, 0, 0 \} = \{ 2t, t - la, 1 \}.
\end{align*}

Once again, we consider sub-cases depending on which of the elements in $A$ coincide modulo $n$.

\noindent\textbf{Case 3.1:} If $2t - la \equiv t \pmod{n}$, i.e. $t \equiv la \pmod{n}$, then
\begin{align*}
    A = \{ t+1, -t, a \},\
    B = \{ 2t, 0, 1 \}.
\end{align*}

If the elements in $A$, respectively $B$, are all distinct modulo $n$, then by Lemma~\ref{lemUniqueExpansion} we must have $a \equiv 0 \pmod{n}$, but then $2t \equiv t \pmod{n}$ implying $t \equiv 0 \pmod{n}$.

If some of the elements of $B$ coincide modulo $n$, then we must have $2t \equiv 1 \pmod{n}$, and two of the elements in $A$ must collide two. If $t + 1 \equiv -t \pmod{n}$, then we obtain $2t \equiv -1 \pmod{n}$ in addition to $2t \equiv 1 \pmod{n}$, and so $1 \equiv -1 \pmod{n}$. If $t+1 \equiv a \pmod{n}$, then we get $A = \{ a+1, t+1 \}$ and $B = \{ 0, 2 \}$, and $A \equiv B \pmod{n}$ implies trivial values of $t$ in all cases. If $a \equiv -t \pmod{n}$, then $A = \{ t+1, 1-t \}$ and $B = \{ 0, 2 \}$, implying $t \equiv 0 \pmod{n}$, $t \equiv -1 \pmod{n}$, or $t \equiv 1 \pmod{n}$.

\noindent\textbf{Case 3.2:} If $2t - la \equiv -la\pmod{n}$, then $2t \equiv 0\pmod{n}$ giving a contradiction.

\noindent\textbf{Case 3.3:} If $2t - la \equiv a \pmod{n}$, then we have
\begin{align*}
    A = \{ a + 1, t, a-2t \},\
    B = \{ 2t, a-t, 1 \}.
\end{align*}

\noindent\textbf{Case 3.3.A:} If $a + 1 \equiv t\pmod{n}$, then $A = \{ t +1, -1-t \}$ and $B = \{ 2t, - 1, 1 \}$. At least two of the elements of $B$ must coincide modulo $n$ in order for this to hold. 

\begin{itemize}
    \item
If $2t \equiv -1 \pmod{n}$, then $A = \{ t+1, -1 -t \}$ and $B = \{ 0,1 \}$, which leads to either $0 \equiv 1 \pmod{n}$ (if the elements in $B$ coincide) or to $t equiv -1 \pmod{n}$ by Lemma~\eqref{lemUniqueExpansion} (if they are distinct modulo $n$).

\item
If $2t \equiv 1\pmod{n}$, then $A = \{ -1,2 \}$ and $B = \{ t+1, -1 -t \}$. Unless $-1 \equiv 2 \pmod{n}$, we can apply Lemma \ref{lemUniqueExpansion} to obtain either $t+1 \equiv 2 \pmod{n}$, i.e. $t \equiv 1 \pmod{n}$, or $t+1 \equiv -1 \pmod{n}$, i.e. $t \equiv -2 \pmod{n}$. Combined with $2t \equiv 1 \pmod{n}$, we obtain $5 \equiv 0 \pmod{n}$, so that $n \le 5$.
\end{itemize}

\noindent\textbf{Case 3.3.B:} If $a + 1 \equiv a - 2t\pmod{n}$, then $2t \equiv -1\pmod{n}$. The sets become $A = \{ a+2, t \}$ and $B = \{ 2t, a-t, 1 \} = \{ -1, a-t, 1 \}$.

\begin{itemize}
    \item If $1 \equiv a - t \pmod{n}$, then we must equate the sets $A = \{ a+2, t \}$ and $\{ -1, 2 \}$, and this leads to either $t \equiv -1 \pmod{n}$, or to $5 \equiv 0 \pmod{n}$.

    \item If $-1 \equiv a - t \pmod{n}$, then we have $A = \{ a+2, t \}$ and $B = \{ 0,1 \}$ which clearly leads to trivial cases or contradictions as before.
\end{itemize}

\noindent\textbf{Case 3.3.C:} If $t \equiv a -2t\pmod{n}$, then $3t \equiv a\pmod{n}$ and $2t \equiv a -t\pmod{n}$. We have $A = \{ a + 1, t + 1 \}$ and $B = \{ a-t+1, 1 \}$.
\begin{itemize}
    \item If $a + 1 \equiv 1 \pmod{n}$, then $a \equiv 0 \pmod{n}$, and so $t \equiv a - 2t \pmod{n}$ implies $t \equiv 0 \pmod{n}$.
    \item If $t + 1 \equiv 1 \pmod{n}$, we obtain the same contradiction as above.
    \item If $a+1 \equiv t+1 \pmod{n}$ and $a-t+1 \equiv 1 \pmod{n}$, then $a \equiv t \pmod{n}$ and $t \equiv a-2t \pmod{n}$ implies $t \equiv 0 \pmod{n}$.
\end{itemize}

\noindent\textbf{Case 3.3.D:} If all elements in $A$, respectively,  $B$ are distinct modulo $n$, then $A \equiv B \pmod{n}$. We consider several sub-cases depending on which element of $A$ is congruent to $1$.

\begin{itemize}
    \item If $a + 1 \equiv 1 \pmod{n}$, then $a \equiv 0 \pmod{n}$ and we have $A = \{ 1,t,-2t \}$ and $B = \{ 2t, -t, 1 \}$. Clearly, we get $t \equiv 0 \pmod{n}$ in all cases.

    \item If $t \equiv 1 \pmod{n}$, we are already in a trivial case.

    \item If $a-2t \equiv 1 \pmod{n}$, then $A = \{ a+1, t, 1 \}$ and $B = \{ 2t, a-t, 1 \}$. Unless $t \equiv 2t \pmod{n}$, we must have $t \equiv a-t \pmod{n}$ and $a+1 \equiv 2t \pmod{n}$, which then implies $1 \equiv 0 \pmod{n}$.
\end{itemize}

\noindent\textbf{Case 3.4:} If $t \equiv a\pmod{n}$, then we have
\begin{align*}
    A = \{ 2a - la, a + 1, -la \},\
    B = \{ 2a, a - la, 1 \}.
\end{align*}

We consider subcases depending on the structure of $A$.

\noindent\textbf{Case 3.4.A:} If $2a - la \equiv a + 1\pmod{n}$, then $a \equiv 1 + la\pmod{n}$. The sets now become $A = \{ a + 2, 1-a \}$ and $B = \{ 2a, 2 \}$. If the elements in $A$, respectively,  $B$ coincide modulo $n$, then we get $2a \equiv -1 \equiv 2\pmod{n}$, and so $n \le 3$. Otherwise, by Lemma~\ref{lemUniqueExpansion} we have either $a + 2 \equiv 2\pmod{n}$ implying $a = t = 0$, or $a + 2 \equiv 2a\pmod{n}$ and $1-a \equiv 2\pmod{n}$, i.e. $a \equiv -1\pmod{n}$ and $a \equiv 2\pmod{n}$ implying $n \le 3$.

\noindent\textbf{Case 3.4.B:} If $2a - la \equiv -la\pmod{n}$, then $a = t = 0$.

\noindent\textbf{Case 3.4.C:} If $a + 1 \equiv - la\pmod{n}$, then $A = \{ 3a + 1, a + 2 \}$ and $B = \{ 2a + 1, 2a, 1 \}$. Since two terms of $B$ must be congruent modulo $n$, we get $2a \equiv 1\pmod{n}$, so that $B$ becomes $B = \{ 2a + 1, 2 \}$. If the elements in the sets are pairwise congruent modulo $n$, then $A$ collapses to $\{ a + 3 \}$ while $B$ collapses to $\{3 \}$, implying $a \equiv 0 \equiv t \pmod{n}$, which cannot happen. Thus, we must have $a + 2 \equiv 2a + 1\pmod{n}$, so that $t \equiv a \equiv 1\pmod{n}$.

\noindent\textbf{Case 3.4.D:} If all elements of $A$, respectively,  $B$ are distinct modulo $n$, then we apply Lemma~\ref{lemUniqueExpansion}.
If $2a - la \equiv 2a\pmod{n}$, then $lt \equiv 0\pmod{n}$ so $l \equiv  0\pmod{n}$ which cannot happen.
If $2a - la \equiv a -la\pmod{n}$, then $a = 0 = t$.

If $2a - la \equiv 1$ and $a + 1 \equiv a -la\pmod{n}$, then we get $2a + 1 \equiv 1\pmod{n}$ and so $a = t = 0$.

\noindent\textbf{Case 3.5:} If $t \equiv -la\pmod{n}$, then we have
\begin{align*}
    A = \{ 3t, t+1, a \},\
    B = \{ 2t + 1, 1 \}.
\end{align*}

\begin{itemize}
    \item
If $3t \equiv t + 1\pmod{n}$, then $2t \equiv 1\pmod{n}$, so $A = \{ t + 2, a \}$ and $B = \{ 1,2 \}$. This leads to either $n \le 1$, $t = 1$, or $t = 0$.

\item
If $3t \equiv a$, then $A = \{ a+1, t + 1 \}$ and $B = \{ 2t + 1, 1 \}$. This leads to $t = 0$ in all cases.

\item
If $t + 1 \equiv a\pmod{n}$, then $A = \{ 3t, t + 2 \}$ and $B = \{ t + a, 1 \}$. If $3t \equiv t+2 \pmod{n}$, then we directly get $t \equiv 1 \pmod{n}$. Otherwise, we apply Lemma \ref{lemUniqueExpansion} to get either the trivial case $t \equiv -1 \pmod{n}$ if $t+2 \equiv 1 \pmod{n}$, or alternatively $t +2 \equiv t + 2 \pmod{n}$, yielding $a = 2$, and implying $t = 1$ from $t + 1 \equiv a \pmod{n}$.
\end{itemize}

\noindent\textbf{Case 3.6:} If $a \equiv -la\pmod{n}$, then
\begin{align*}
    A = \{ 2t + a, t, a + 1 \},\
    B = \{ 2t, t+a, 1 \}.
\end{align*}

\noindent\textbf{Case 3.6.A:} If $2t + a \equiv t\pmod{n}$, then $t \equiv -a\pmod{n}$, and we have $A = \{ t+1, 1-t \}$ and $B = \{ 2t, 0, 1 \}$. Clearly, $1 \equiv 0\pmod{n}$ and $2t \equiv 0\pmod{n}$ imply trivial cases or a contradiction, so we must have $2t \equiv 1\pmod{n}$ so that $B = \{ 0, 2 \}$. Now, we must either have $0 \equiv 2\pmod{n}$, $t = 1$, or $t \equiv -1\pmod{n}$.

\noindent\textbf{Case 3.6.B:} If $2t + a \equiv a + 1\pmod{n}$, then $2t \equiv 1$, and so $A = \{ a + 2, t \}$ and $B = \{ t + a, 2 \}$. If $a + 2 \equiv t\pmod{n}$ and $a + t \equiv 2\pmod{n}$, then we deduce $2 - t \equiv t - 2\pmod{n}$, i.e. $2t \equiv 4\pmod{n}$, and so $4 \equiv 1\pmod{n}$. If $t \equiv a + t\pmod{n}$, we get $a = 0$, and hence $A = \{ 0, t \}$ and hence $i = 0$ by the assumption $a + i \equiv 0 \pmod{n}$. If $a + 2 \equiv a+t\pmod{n}$ and $t \equiv 2\pmod{n}$, then $2t \equiv 1\pmod{n}$ implies $3 \equiv 0\pmod{n}$.

\noindent\textbf{Case 3.6.C:} If $t \equiv a+1\pmod{n}$, i.e. $a \equiv t-1\pmod{n}$, then $A = \{ t-1, t+1 \}$ and $B = \{ 2t, 2t-1,1 \}$. The only possibility for two elements in $B$ to coincide is when $2t \equiv 1\pmod{n}$, so $B = \{ 2t-1, 2 \}$. Now, if $t-1 \equiv t+1\pmod{n}$, we get $2 \equiv 0\pmod{n}$; if $t-1 \equiv 2\pmod{n}$ and $t + 1 \equiv 2t-1\pmod{n}$, then $t \equiv 3\pmod{n}$ and $t \equiv 2\pmod{n}$; and if $t-1 \equiv 2t-1\pmod{n}$, we immediately get a contradiction.

\noindent\textbf{Case 3.6.D:} If all elements in $A$, respectively,  $B$ are distinct, then by Lemma~\ref{lemUniqueExpansion}, we should have $t \equiv t+a$, so that $a = 0$, but then $a \equiv -i\pmod{n}$ contradicts the exponent being APN. The only remaining two possibilities are $t \equiv 2t\pmod{n}$, implying an immediate contradiction, and $t \equiv 1\pmod{n}$, which is the Gold case.

\noindent\textbf{Case 4:} If $2t \equiv a + i\pmod{n}$, then we have
\begin{align*}
    A = \{ 2t + li, t, li, a \},\
    B = \{ 2t, li + t, 2t, 0 \} = \{ 2t + 1, li + t, 0 \}.
\end{align*}
At least two of the elements of $A$ must now collide.

\noindent\textbf{Case 4.1:} If $2t + li \equiv t\pmod{n}$, then $t \equiv -li\pmod{n}$, and so $A = \{ t + 1, -t, a \}$ and $B = \{ 2t + 1, 1 \}$. Thus, another two of the elements of $A$ must collide.

\begin{itemize}
    \item
If $t + 1 \equiv -t\pmod{n}$ or $t + 1 \equiv a\pmod{n}$, then $A$ contains $t + 2$, while $B$ remains $B = \{ 2t + 1, 1 \}$. Then $t + 2 \equiv 2t + 1\pmod{n}$ and $t+2 \equiv 1\pmod{n}$ both imply $t = 1$, while $2t + 1 \equiv 1\pmod{n}$ gives a contradiction.

\item
If $a \equiv -t\pmod{n}$, then $A = \{ t + 1, 1-t \}$ and $B = \{ 2t + 1, 1 \}$, which leads to $t = 0$ in all cases.
\end{itemize}

\noindent\textbf{Case 4.2:} If $2t + li \equiv li\pmod{n}$, we immediately get $t = 0$.

\noindent\textbf{Case 4.3:} If $2t + li \equiv a\pmod{n}$, then $A = \{ a + 1, t, li \}$ and $B = \{ 2t + 1, li + t, 0 \}$.

\noindent\textbf{Case 4.3.A:} If $2t \equiv -1\pmod{n}$, then $B = \{ 1, li + t \}$. Now, two of the elements in $A = \{ a + 1, t, li \}$ must collide. 

\begin{itemize}
    \item
If $a + 1 \equiv t\pmod{n}$, then $A = \{ t + 1, li \}$ and $B = \{ li + t, 1 \}$. If $t + 1 \equiv li\pmod{n}$ and $li + t \equiv 1\pmod{n}$, then $t \equiv 0\pmod{n}$. Otherwise, we must have $t + 1 \equiv li + t\pmod{n}$ and $li \equiv 1\pmod{n}$, hence $2t + li \equiv a\pmod{n}$ implies $2t + 1 \equiv a\pmod{n}$. But since $2t \equiv -1\pmod{n}$, we get $a = 0$, hence $t = 1$.

\item
If $a + 1 \equiv li\pmod{n}$, then $A = \{ a + 2, t \}$ and $B = \{ 1, a + t + 1 \}$. If $a + 2 \equiv t \pmod{n}$ and $a + 1 + t \equiv 1 \pmod{n}$, we obtain $2 \equiv 0 \pmod{n}$. Otherwise, the elements of $A$ and $B$ are distinct modulo $n$, so we have either $t \equiv 1 \pmod{n}$, or $t \equiv a + 1 + t \pmod{n}$ and $a + 2 \equiv 1 \pmod{n}$, which implies $a \equiv -1 \pmod{n}$. Combined with $2t \equiv a + i \pmod{n}$, and $2t \equiv -1 \pmod{n}$, this implies $i \equiv 0 \pmod{n}$.

\item
If $t \equiv li\pmod{n}$, then $A = \{ t + 1, a + 1 \}$ and $B = \{ 2t, 1 \}$. If the set elements are not distinct, then we have $t \equiv a \pmod{n}$ and $2t \equiv 1 \pmod{n}$; combined with $2t \equiv -1 \pmod{n}$, the latter clearly leads to a trivial case. Otherwise, we have $t + 1 \equiv 1 \pmod{n}$ or $t + 1 \equiv 2t \pmod{n}$, both of which imply $t \equiv 1 \pmod{n}$.
\end{itemize}

\noindent\textbf{Case 4.3.B:} If $li \equiv -t\pmod{n}$, then $A = \{ a + 1, t, -t \}$ and $B = \{ 1, 2t + 1 \}$. Two of the elements of $A$ must coincide modulo $n$, and this clearly can not be $t$ and $-t$. If $a + 1 \equiv t \pmod{n}$, then $A = \{ t+1, -t \}$ while $B = \{ 2t + 1, 1 \}$, leading to trivial cases. If $a + 1 \equiv -t \pmod{n}$, then $A = \{ 1-t, t \}$ and $B = \{ 2t + 1, 1 \}$, which ultimately has the same effect.

\noindent\textbf{Case 4.3.C:} If $2t + 1 \equiv li + t \pmod{n}$, i.e. $li \equiv t + 1 \pmod{n}$, then the sets become $A = \{ a+1, t, t+1 \}$ and $B = \{ 2t + 2, 0 \}$. Two of the elements in $A$ must coincide.

\begin{itemize}
    \item If $a + 1 \equiv t \pmod{n}$, then $A = \{ t + 2 \}$, so we must have $2t + 2 \equiv 0 \pmod{n}$, implying $t \equiv -1 \pmod{n}$.

    \item If $a + 1 \equiv t + 1 \pmod{n}$, i.e. $a \equiv t \pmod{n}$, then $A = \{ t+2, t \}$ and $B = \{ 2t + 2, 0 \}$. Then $t \equiv -2 \pmod{n}$ combined with $2t + 1 \equiv 0 \pmod{n}$ implies $3 \equiv 0 \pmod{n}$, while $t+2 \equiv 2t + 2 \pmod{n}$ and $2t + 2 \equiv 0 \pmod{n}$ imply trivial cases.

    \item If $t \equiv t + 1 \pmod{n}$, we immediately get $1 \equiv 0 \pmod{n}$.
\end{itemize}

\noindent\textbf{Case 4.3.D:} If all elements in the sets are distinct, then comparing the sums of their elements modulo $n$ yields $2t \equiv a \pmod{n}$. Then $A = \{ 2t + 1, t, li \}$ and $B = \{ 2t + 1, li + t, 0 \}$. Unless $t \equiv 0 \pmod{n}$, we must then have $li \equiv 0 \pmod{n}$, from where it is easy to derive $i = 0$.

\noindent\textbf{Case 4.4:} If $t \equiv a$, then
\begin{align*}
    A = \{ 2t + li, t + 1, li \},\
    B = \{ 2t + 1, li + t, 0 \}.
\end{align*}

\noindent\textbf{Case 4.4.A:} If $2t + li \equiv t + 1\pmod{n}$, i.e. $t \equiv 1 - li\pmod{n}$, then $A = \{ t + 2, 1-t \}$ and $B = \{ 2t + 1, 1, 0 \}$. Two of the elements of $B$ must now collide, and the only non-trivial possibility is $2t + 1 \equiv 0 \pmod{n}$. Then $B$ collapses to $\{ 2 \}$, and $A$ must collapse to $\{ t + 3 \}$, hence $t \equiv 1 \pmod{n}$.

\noindent\textbf{Case 4.4.B:} If $2t + li \equiv li\pmod{n}$, we immediately get a contradiction.

\noindent\textbf{Case 4.4.C:} If $t + 1 \equiv li\pmod{n}$, then $A = \{ 3t + 1, t + 2 \}$ and $B = \{ 2t+2, 0 \}$. Now, $2t + 2 \equiv 0\pmod{n}$ implies $t \equiv -1\pmod{n}$, while $t + 2 \equiv 2t + 2\pmod{n}$ implies an immediate contradiction. We must therefore have $t \equiv -2\pmod{n}$, but then $3t + 1 \equiv 2t + 2\pmod{n}$ implies $t \equiv 1\pmod{n}$.

\noindent\textbf{Case 4.4.D:} If all elements are distinct, then the sum of the elements in $A$ and $B$ must be congruent modulo $n$, i.e. $3t + 2li + 1 \equiv 3t + li + 1 \pmod{n}$, implying $li \equiv 0 \pmod{n}$. Then $A = \{ 2t, t + 1, 0 \}$ and $B = \{ 2t + 1, t, 0 \}$, leading to trivial cases.

\noindent\textbf{Case 4.5:} If $t \equiv li\pmod{n}$, then we have
\begin{align*}
    A = \{ 3t, t+1, a \},\
    B = \{ 2t + 1, 2t, 0 \}.
\end{align*}

\noindent\textbf{Case 4.5.A:} If $2t \equiv -1\pmod{n}$, then $A = \{ t-1, t+1, a \}$ and $B =\{ 2t,1\}$.

\begin{itemize}
    \item
If $t-1 \equiv t+1\pmod{n}$, then we get $2 \equiv 0\pmod{n}$.

\item
If $t-1 \equiv a\pmod{n}$, then we have $A = \{ a+1, t+1 \}$. If $a \equiv t\pmod{n}$, then $1 \equiv 0\pmod{n}$. If $t+1 \equiv 1\pmod{n}$, we get a contradiction, and if $t+1 \equiv 2t\pmod{n}$, then $t \equiv 1\pmod{n}$.

\item
If $t + 1 \equiv a\pmod{n}$, then $A = \{ t-1, t+2 \}$ and $B = \{ 2t,1 \}$, so we get either $3 \equiv 0\pmod{n}$, or $t \equiv -1$, or $t \equiv 2\pmod{n}$ which, combined with $2t \equiv -1\pmod{n}$ yields $5 \equiv 0\pmod{n}$.

\end{itemize}

\noindent\textbf{Case 4.5.B:} If $2t \equiv 0\pmod{n}$, we get a contradiction immediately.

\noindent\textbf{Case 4.5.C:} If $2t + 1 \equiv 2t\pmod{n}$, we get $1 \equiv 0\pmod{n}$.

\noindent\textbf{Case 4.5.D:} If all elements are distinct, then we deduce $a = 0$ and then $2t \equiv t+1\pmod{n}$, so that $t \equiv 1\pmod{n}$.

\noindent\textbf{Case 4.6:} If $li \equiv a\pmod{n}$, then
\begin{align*}
    A = \{ 2t + li, t, li + 1 \},\
    B = \{ 2t+1, li+t, 0 \}.
\end{align*}

\noindent\textbf{Case 4.6.A:} If $li + t \equiv 0\pmod{n}$, i.e $li \equiv -t \equiv a\pmod{n}$, then $A = \{ t + 1, 1-t \}$ and $B = \{ 2t + 1, 1 \}$, which reduces to trivial cases.

\noindent\textbf{Case 4.6.B:} If $2t + 1 \equiv 0\pmod{n}$, i.e. $2t \equiv -1\pmod{n}$, then $A = \{ li - 1, t, li + 1 \}$ and $B = \{ 1, li + t \}$.

\begin{itemize}
\item 
If $li - 1 \equiv t$, then $A = \{ t + 1, li + 1 \}$ and $B = \{ 1, li + t \} = \{ li - t, li + t \}$. The elements must clearly be distinct modulo $n$, and then we either have $t + 1 \equiv 1 \pmod{n}$, or $li \equiv 1 \pmod{n}$ and $li \equiv 0 \pmod{n}$.

\item
If $li - 1 \equiv li + 1\pmod{n}$, we get $2 \equiv 0\pmod{n}$ immediately.

\item
If $t \equiv li + 1\pmod{n}$, then $A = \{ li-1, t+1 \}$ and $B = \{ 1,li+t \} = \{ t-li, li+t \}$. If the elements in $A$, respectively $B$, coincide modulo $n$, then we get $li \equiv 0 \pmod{n}$ and hence $t \equiv 1 \pmod{n}$. Otherwise, we get either $t + 1 \equiv 1 \pmod{n}$, or $t + 1 \equiv li + t \pmod{n}$, i.e. $li \equiv 1 \pmod{n}$ so that $t \equiv 2 \pmod{n}$ and hence $3 \equiv 0 \pmod{n}$.
\end{itemize}

\noindent\textbf{Case 4.6.C:} If $2t + 1 \equiv li + t\pmod{n}$, so that $li \equiv t + 1\pmod{n}$, then we have $A = \{ 3t + 1, t, t + 2 \}$ and $B = \{ 2t + 2, 0 \}$. Now, if $3t + 1 \equiv t\pmod{n}$, we get $A = \{ t + 1, t + 2 \}$ and $B = \{ 1, 0 \}$, which clearly leads to trivial cases. If $3t + 1 \equiv t + 2\pmod{n}$, then we get $2t \equiv 1$, and so $A = \{ t + 3, t \}$ and $B = \{ 3, 0 \}$ which is again easy to handle. Finally, if $t \equiv t + 2\pmod{n}$, we get $2 \equiv 0\pmod{n}$.

\noindent\textbf{Case 4.6.D:} If all elements in $A$, respectively,  $B$ are distinct, then their sums must be congruent modulo $n$, whence $3t + 2li + 1 \equiv 3t + li + 1 \pmod{n}$, so that $li \equiv 0 \pmod{n}$. Then $A = \{ 2t, t, 1 \}$ and $B = \{ 2t + 1, t, 0 \}$, yielding only trivial cases.

\noindent\textbf{Case 5:} If $2t \equiv li + t\pmod{n}$, i.e. $t \equiv li\pmod{n}$, then
\begin{align*}
    A = \{ 3t, t+1, a \},\
    B = \{ 2t+1, a +i, 0 \}.
\end{align*}

\noindent\textbf{Case 5.1:} If $2t + 1 \equiv 0\pmod{n}$, then
\begin{align*}
    A = \{ t-1, t+1, a \},\
    B = \{ 1, a+i \}.
\end{align*}

Two of the elements in $A$ must now collide.

\noindent\textbf{Case 5.1.A:} If $t - 1 \equiv t + 1 \pmod{n}$, then $2 \equiv 0\pmod{n}$.

\noindent\textbf{Case 5.1.B:} If $t - 1 \equiv a\pmod{n}$, then $A = \{ t, t+1 \}$ and $B = \{ 1, a+i \}$. If $t \equiv t+1$, then $1 \equiv 0\pmod{n}$; if $t \equiv 1\pmod{n}$, we get the Gold case; if $t \equiv a+i\pmod{n}$, then from $t + 1 \equiv 1 \pmod{n}$ we get $t \equiv 0 \pmod{n}$.

\noindent\textbf{Case 5.1.C:} If $t+1 \equiv a\pmod{n}$, then $A = \{ t-1, a+1 \}$ and $B = \{ 1, a+i \}$. If $t - 1 \equiv a + 1\pmod{n}$, then $a - 2 \equiv a + 1\pmod{n}$ and so $3 \equiv 0\pmod{n}$. If $t-1 \equiv 1\pmod{n}$, then $t \equiv 2\pmod{n}$ and we get $4 \equiv -1\pmod{n}$. If $t-1 \equiv a+i\pmod{n}$ and $a + 1 \equiv 1\pmod{n}$, then $a = 0$, and so $t \equiv -1\pmod{n}$.

\noindent\textbf{Case 5.2:} If $a + i \equiv 0\pmod{n}$, then
\begin{align*}
    A = \{ 3t, t+1, a \},\
    B = \{ 2t + 1, 1 \}.
\end{align*}

\noindent\textbf{Case 5.2.A:} If $3t \equiv t + 1\pmod{n}$, i.e. $2t \equiv 1\pmod{n}$, then $A = \{ t + 2, a \}$ and $B = \{ 1, 2 \}$, so either $1 \equiv 2\pmod{n}$, $t \equiv 0\pmod{n}$ or $t \equiv -1\pmod{n}$.

\noindent\textbf{Case 5.2.B:} If $3t \equiv a\pmod{n}$, then $A = \{ a + 1, t + 1 \}$ and $B = \{ 2t + 1, 1 \}$; once again, we get $t \equiv 0\pmod{n}$.

\noindent\textbf{Case 5.2.C:} If $t+1 \equiv a\pmod{n}$, then $A = \{ 3t, a+1 \}$ and $B = \{ 2t + 1, 1 \}$. If $2t + 1 \equiv 1\pmod{n}$, we get a contradiction. If $3t \equiv 2t + 1\pmod{n}$, we get $t \equiv 1\pmod{n}$. If $3t \equiv 1\pmod{n}$ and $a + 1 \equiv 2t + 1\pmod{n}$, then $a \equiv 2t \equiv t + 1\pmod{n}$ and so $t \equiv 1\pmod{n}$.

\noindent\textbf{Case 5.3:} If $2t + 1 \equiv a+i\pmod{n}$, then
\begin{align*}
    A = \{ 3t, t+1, a \},\
    B = \{ 2t + 2, 0 \}.
\end{align*}

\noindent\textbf{Case 5.3.A:} If $3t \equiv t + 1\pmod{n}$, i.e. $2t \equiv 1\pmod{n}$, then $A = \{ t + 2, a \}$ and $B = \{ 3, 0\}$. Unless $3 \equiv 0\pmod{n}$ or $t \equiv 1\pmod{n}$, then $t \equiv -2\pmod{n}$ and $a \equiv 3\pmod{n}$. From $2t \equiv 1\pmod{n}$ we get $-4 \equiv 1\pmod{n}$.

\noindent\textbf{Case 5.3.B:} If $3t \equiv a\pmod{n}$, then $A = \{ 3t + 1, t + 1 \}$ and $B = \{ 2t + 2, 0 \}$. If $3t + 1 \equiv t+1\pmod{n}$, we get $t \equiv 0\pmod{n}$. If $t + 1 \equiv 2t + 2\pmod{n}$, then $t \equiv -1\pmod{n}$. If $t + 1 \equiv 0\pmod{n}$, then $t \equiv -1\pmod{n}$.

\noindent\textbf{Case 5.3.C:} If $t + 1 \equiv a\pmod{n}$, then $A = \{ 3t, t+2 \}$ and $B = \{ 2t + 2, 0 \}$. If $3t \equiv t + 2\pmod{n}$, then $t \equiv 1\pmod{n}$. If $3t \equiv 2t + 2\pmod{n}$, we get $t = 2$ and from $t + 2 \equiv 0\pmod{n}$ we get $4 \equiv 0\pmod{n}$. If $3t \equiv 0\pmod{n}$, we get a contradiction immediately.

\noindent\textbf{Case 6:} If $li + t \equiv a + i\pmod{n}$, then
\begin{align*}
    A = \{ 2t + li, t, li, a \},\
    B = \{ li + t + 1, 2t, 0 \}.
\end{align*}
This is treated similarly to all the previous cases.

\noindent\textbf{Case 7:} If all elements of $A$, respectively,  $B$ are distinct modulo, then by Lemma~\ref{lemUniqueExpansion} we have
\begin{equation*}
    \{ 2t + li, t, li, a \} \equiv \{ 2t, li+t, a+i, 0 \} \pmod{n}.
\end{equation*}

\begin{itemize}
\item Clearly, $0 \equiv t\pmod{n}$ and $t \equiv 2t\pmod{n}$ is impossible.

\item
If $t \equiv li + t\pmod{n}$, then $li \equiv 0\pmod{n}$, and so $A = \{ 2t, t, 0, a \}$ and $B = \{ 2t, t, a+i, 0 \}$. Then $a \equiv a + i\pmod{n}$ so $i \equiv 0\pmod{n}$ which can not happen.

\item
If $t \equiv a + i\pmod{n}$, then $A = \{ 2t + li, t, li, a \}$ and $B = \{ 2t, li+t, t, 0 \}$. 
\begin{itemize}
    \item  If $li \equiv 0\pmod{n}$, then $2t + li \equiv 2t\pmod{n}$, so $a \equiv li + t\pmod{n}$ and $a \equiv li + a + i\pmod{n}$ implies $i \equiv 0\pmod{n}$.

    \item
    If $li \equiv li + t\pmod{n}$, we get $t \equiv 0\pmod{n}$.

    \item
    If $li \equiv 2t\pmod{n}$, then $A = \{ 4t, t, 2t, a \}$ and $B = \{ 2t, 3t, t, 0 \}$ which necessarily implies $t \equiv 0\pmod{n}$.
\end{itemize}
\end{itemize}

This completes the proof.
\end{proof}

We can verify that for $n = 3$ (where the Kasami exponents coincide with the Gold exponents) and for $n = 5$, we can express the Kasami using $e(l,k)$. In the case of $n = 5$, the relevant exponents are $5 = e(2,2)$, $7 = (3,1)$, $9 = e(2,3)$, and $25 \equiv e(3,4) \pmod{2^5-1}$. By the above characterizations, the Kasami family never coincides with $e(l,k)$ in any other cases.

\subsection{Niho even case}

The Niho exponent for $n = 2t+1$ with $t$ even is $2^t + 2^{t/2} - 1$. The algebraic degree is $(t+2)/2$. 

\begin{theorem}
\label{propNiho_even}
Let $t > 2$ be an even natural number, $n = 2t + 1$ and $i$ be such that $\gcd(i,n) = 1$. Then the even Niho exponent $2^{t}+2^{t/2}-1$ is never cyclotomic equivalent to $e((t+2)/2,i)$ over $\F_{2^n}$.
\end{theorem}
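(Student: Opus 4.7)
The plan is to rewrite the Niho even exponent $N=2^t+2^{t/2}-1$ modulo $2^n-1$ as a positive sum of powers of two, apply Lemma~\ref{lemUniqueExpansion} to reduce a cyclotomic equivalence to a set identity, and then exploit the rigidity of a short arithmetic progression forced to lie inside a short integer interval.  Using $2^{t/2}-1=\sum_{j=0}^{t/2-1}2^j$, we obtain $N\equiv 2^t+\sum_{j=0}^{t/2-1}2^j\pmod{2^n-1}$, so the exponent set of $N$ is $\{0,1,\dots,t/2-1,t\}$, with Hamming weight $(t+2)/2$ matching the algebraic degree.  An alleged equivalence $2^a N\equiv e((t+2)/2,i)\pmod{2^n-1}$ with $\gcd(i,n)=1$ then translates into an identity of multisets
\begin{equation*}
A=\{a,a+1,\dots,a+t/2-1,a+t\}, \qquad B=\{0,i,2i,\dots,(t/2)\,i\},
\end{equation*}
both of cardinality $(t+2)/2$ and with pairwise distinct elements modulo $n$ ($A$ because its integer range has length $t<n$, $B$ because $\gcd(i,n)=1$ and $t/2<n$).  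Lemma~\ref{lemUniqueExpansion} then yields $A\equiv B\pmod n$.

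The structural heart of the argument is that $A$ contains a block of $t/2$ consecutive integers modulo $n$.  Let $k^*\in\{0,\dots,t/2\}$ denote the unique index with $k^*i\equiv a+t\pmod n$, and let the remaining indices $k_0,\dots,k_{t/2-1}\in\{0,\dots,t/2\}\setminus\{k^*\}$ satisfy $k_j i\equiv a+j\pmod n$ after suitable reordering.  Subtracting consecutive relations gives $(k_{j+1}-k_j)\,i\equiv 1\pmod n$, so the integer differences $\delta_j:=k_{j+1}-k_j$ all lie in the same residue class $d:=i^{-1}\Mod n$.  Since each $\delta_j\in\{-t/2,\dots,t/2\}\setminus\{0\}$ and $n=2t+1>t$, the residue $d$ cannot lie in the open range $(t/2,n-t/2)$, and either all $\delta_j$ equal $d$ (when $d\leq t/2$) or all equal $d-n$ (when $d\geq n-t/2$).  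The requirement that $k_0,k_0+\delta,k_0+2\delta,\dots,k_0+(t/2-1)\delta$ fit inside $\{0,\dots,t/2\}$ then forces the common step $\delta$ to satisfy $(t/2-1)|\delta|\leq t/2$, i.e.\ $\delta\in\{\pm 1\}$ when $t\geq 6$ and $\delta\in\{\pm 1,\pm 2\}$ when $t=4$.

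I conclude by eliminating each case.  For $\delta=1$ we have $i\equiv 1\pmod n$, $B=\{0,1,\dots,t/2\}$, and $k^*\in\{0,t/2\}$; the forced identity $a+t\equiv k^*\pmod n$ reduces to either $t/2\equiv 0\pmod n$ or $t+1\equiv 0\pmod n$, both ruled out by $0<t/2<n$ and $t>2$.  The case $\delta=-1$ (equivalently $d=n-1$) is completely symmetric, and can alternatively be reduced to $\delta=1$ by invoking the cyclotomic equivalence $e(l,i)\sim e(l,n-i)$ from Lemma~\ref{lemHalfDimension}.  The exceptional cases $\delta\in\{\pm 2\}$ occur only for $t=4,n=9$, and a direct check verifies that neither $\{0,5,1\}$ nor $\{0,4,8\}$ equals any shift $\{a,a+1,a+4\}\Mod 9$.

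The main obstacle is the bookkeeping across the four sub-cases for $\delta$: each forces a specific value of $a$ via the correspondence $k_0 i\equiv a\pmod n$, and carelessly collapsing the positive and negative branches can hide the genuinely different placements of $k^*$.  I plan to treat the $\delta>0$ branch in detail and invoke the Lemma~\ref{lemHalfDimension} symmetry for $\delta<0$, and to dispatch $t=4$ with an explicit verification in $\Z/9\Z$, rather than trying to force a single uniform argument that obscures the small-dimension edge.
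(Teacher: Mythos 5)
Your proof is correct, and it takes a genuinely different (and cleaner) route than the paper's. The paper works directly with the shifted expression $2^{a+t}+2^{a+t/2}-2^a$ and splits into three cases according to whether $a+t$ and $a+t/2$ exceed $n$, invoking Observation~\ref{obsBinaryComplement} to expand the negative term when wrap-around occurs; within each case it extracts a contradiction by the ``choose $\alpha,\beta$ with $\alpha i\equiv u$, $\beta i\equiv v$ and test whether $\pm(u-v)i^{-1}$ lands in $A$'' device, and in its first case it additionally compares the sums of the two exponent sets, which produces a congruence $11\equiv 3i\pmod n$, a sub-split on $q\bmod 3$, and two exceptional dimensions ($n=5$, excluded by $t>2$, and $n=25$, dispatched by exhaustive search). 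You avoid all of this by first writing $N=2^t+\sum_{j=0}^{t/2-1}2^j$ with every exponent already below $n$, so that multiplication by $2^a$ is literally a cyclic shift of the exponent set and the value of $a$ plays no structural role; the single application of Lemma~\ref{lemUniqueExpansion} then reduces everything to the rigidity statement that the index set $\{0,\dots,t/2\}\setminus\{k^*\}$ must be an arithmetic progression of common difference $\delta\equiv i^{-1}$ confined to an interval of length $t/2$, forcing $\delta=\pm1$ for $t\ge 6$ (whence $B$ is an interval of consecutive residues while $A$ is an interval plus an isolated point, a contradiction since $t\not\equiv -1, t/2 \pmod n$) and leaving only the finite check at $t=4$, $n=9$. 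What your approach buys is uniformity in $a$ and the elimination of the $n=25$ computer check; what the paper's approach buys is nothing here that yours lacks, though its case template is the one reused throughout Section~4, so it is the ``house style.'' The one point worth making explicit in a final write-up is that the theorem's phrase ``cyclotomic equivalent'' is, by the paper's convention, handled in two halves: this theorem covers only the direct coset condition $2^aN\equiv e((t+2)/2,i)$, with the inverse treated separately in Theorem~\ref{propNiho_even_inv}; your proof, like the paper's, addresses only the direct half, which is the intended scope.
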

\begin{proof}
Once again, suppose that
\begin{equation*}
	2^a(2^t + 2^{t/2} - 1) \equiv e( (t+2)/2, i)\pmod{2^n-1}
\end{equation*}
for some natural number $i$. We split the proof into several cases, applying Lemma \ref{lemUniqueExpansion}.

\noindent\textbf{Case 1:} If $a + t < n$,  the (shifted by $a$) Niho exponent is then
\begin{equation*}
	2^{a + t} + 2^{a + t/2} - 2^a = 2^{a+t} + \sum_{j = a}^{a+t/2-1} 2^j,
\end{equation*}
and so the set of its exponents is precisely
\begin{equation*}
	A = \{ a, a+1, a+2, \ldots, a + t/2 - 1 \} \cup \{ a + t \}.
\end{equation*}
The set of exponents of $e( (t+2)/2, i)$ is
\begin{equation*}
	B =  \{ 0, i, 2i, 3i, \ldots, (t/2) i \} \Mod{n}.
\end{equation*}
If $0 = a + j$ for $j > 0$, then we immediately get a contradiction. Thus, we must have $a = 0$ and the set $A$ becomes
\begin{equation*}
A = \{ 0, 1, 2, \ldots, t/2 - 1 \} \cup \{ t \}.
\end{equation*}
Let $q = t/2$. The sum of all elements in $A$ is
\begin{equation*}
	t + \sum_{j = 1}^{t/2-1} j = 2q + \frac{(q-1)q}{2} = \frac{q^2 + 3q}{2},
\end{equation*}
while the sum of all elements in $B$ is
\begin{equation*}
	i \frac{q(q+1)}{2} = \frac{(q^2 + q)i}{2}
\end{equation*}
modulo $n$. Since $2,q$ are invertible modulo $n$, we must have
\begin{equation*}
	q + 3 \equiv (q+1)i.
\end{equation*}
Multiplying both sides by $4$, we have
\begin{equation*}
	4q + 12 \equiv (4q + 4)i \pmod{n}.
\end{equation*}
Since $n = 2t+1 = 4q + 1$, the above becomes
\begin{equation*}
	11 \equiv 3i \pmod{n}.
\end{equation*}

If $q \equiv 2 \pmod{3}$, then $3 \mid n$, and so $3i$ does not have an inverse whereas $11$ does, which is a contradiction.

If $q \equiv 1 \pmod{3}$, then the inverse of $3$ modulo $n = 4q + 1$ is $(4q+2)/3$. From the identity $11 \equiv 3i \pmod{n}$ above, we thus get
\begin{equation*}
  i \equiv \frac{11(4q+2)}{3} \pmod{n},
\end{equation*}
which becomes
\begin{equation*}
  i \equiv 3(4q+2) + \frac{2(4q+2)}{3} \equiv 3 + \frac{8q + 4}{3} \equiv \frac{8q + 13}{3} \equiv \frac{2n + 11}{3} \pmod{n}.
\end{equation*}
If $(2n + 11)/3 < n$, then no modulation is necessary, and this number belongs to the set $A$; it must thus either be no greater than $t/2 - 1$, or it must equal $t$. In the former case, we have
\begin{equation*}
  \frac{2n + 11}{3} \le \frac{t-2}{2} \iff 4n + 22 \le 3t - 6 \iff 3t \ge 4n + 28 \ge 4n,
\end{equation*}
so that $t \ge 4/3 n$, which contradicts the choice of $t$. In the latter case, we have
\begin{equation*}
  \frac{2n + 11}{3} = t \iff 2n + 11 = 3t,
\end{equation*}
but since $n = 2t + 1$, we have $4t + 13 = 3t$, i.e. $t = -13$, which cannot be.

The only remaining case is when $(2n + 11)/3 \ge n$, i.e. $n \le 11$. Since we consider dimensions of the form $n = 2t + 1$ for even $t$ with $t/2 \equiv 1 \pmod{3}$, this leaves only $n = 5$. In this case, we can see that $e(2,2)$ is exactly the Niho exponent $2^2 + 2^1 - 1 = 5$.

Finally, suppose that $q \equiv 0 \pmod{3}$, i.e. $3 \mid q$. The inverse of $3$ modulo $n = 4q + 1$ is
\begin{equation*}
  3^{-1} \equiv \frac{8q + 3}{3} \pmod{n},
\end{equation*}
and so we get
\begin{equation*}
  \begin{split}
    i & \equiv 11 \cdot 3^{-1} \equiv \frac{11(8q+3)}{3} \equiv 3(8q+3) + \frac{16q + 6}{3} \\
    & \equiv 3 + \frac{16q + 6}{3} \equiv \frac{16q + 15}{3} \equiv \frac{n + 11}{3} \pmod{n}.
  \end{split}
\end{equation*}

If $(n+11)/3 < n$, then it must be contained in $A$. If
\begin{equation*}
  \frac{n+11}{3} \le \frac{t}{2}-1,
\end{equation*}
then we get
\begin{equation*}
  3t \ge 2n + 28,
\end{equation*}
which is clearly impossible.

If $(n+11)/3 = t$, then we get $n + 11 = 3t$, i.e. $t = 12$, so that $n = 25$. We can verify by exhaustive search that the Niho exponent for $n = 25$ cannot be expressed using $e(t,i)$ for any choice of the parameters $t$ and $i$.

Finally, if $(n+11)/3 \ge n$, then we have $2n \le 11$, and so $n \le 6$; the only possibility is $n = 1$, i.e. $t = 0$, which is not a valid choice for the Niho family.

\noindent\textbf{Case 2:} If $a + t/2 < n \le a+t$, then let $k = a + t - n$. We have $k < a + t/2$ since if $k = a + t -n \ge a + t/2$, then $t/2 \ge n$. Similarly, we must have $k < a$ since $a + t - n = k \ge a$ means $t \ge n$. The Niho exponent is thus
\begin{equation*}
  2^{a + t/2} - 2^a + 2^k = \left( \sum_{j = a}^{a + t/2 - 1} 2^j \right) + 2^k
\end{equation*}
in this case. The set of exponents is
\begin{equation*}
  A = \{ k \} \cup \{ a, a+1, a+2, \dots, a + t/2 - 1 \},
\end{equation*}
while that of $e(t/2, i)$ is once again
\begin{equation*}
  B = \{ 0, i, 2i, \dots, (t/2)i \} \Mod{n}.
\end{equation*}
As before, we can only have $0 = a$ or $0 = k$. If $a = 0$, then $k = t - n$, i.e. $n = t - k$ which cannot happen since $t < n$ by the hypothesis. Thus, we must have $k = 0$, i.e. $a + t = n$. From here, we can express $a = t + 1$ due to $n = 2t + 1$. We now have
\begin{equation*}
  A = \{ 0, t+1, t+2, \dots, t + t/2 \}.
\end{equation*}
There must exist $0 \le \alpha, \beta \le t/2$ such that $\alpha i \equiv t+1 \pmod{n}$ and $\beta i \equiv t+2 \pmod{n}$. Then either $\alpha - \beta$ or $\beta - \alpha$ is in the range $\{ 0, 1, \dots, t/2 \}$, and so either $(\alpha - \beta)i \equiv -1 \pmod{n}$ or $(\beta - \alpha)i \equiv 1 \pmod{n}$ must be in $A$. In other words, either $2t$ or $1$ must belong to $A$, which is clearly impossible for $t > 1$. We have thus reached a contradiction.

\noindent\textbf{Case 3:} If $a + t/2 \ge n$, then let $k = a + t/2 - n$. We must have $k + t/2 < a$ since $k + t/2 \ge a$ implies $a + t/2 - n + t/2 \ge a$, i.e. $t \ge n$; and so we have $a > k + t/2 > k$, and the Niho exponent becomes
\begin{equation*}
  -2^a + 2^{k + t/2} + 2^k = - (2^a - 2^{k + t/2}) + 2^k.
\end{equation*}
Using Observation~\ref{obsBinaryComplement}, we can see that
\begin{equation*}
  -(2^a - 2^{k+t/2}) \equiv \sum_{j = 0}^{k + t/2 - 1} 2^j + \sum_{j = a}^{n-1} 2^j \pmod{n},
\end{equation*}
and then
\begin{equation*}
  -(2^a - 2^{k + t/2}) + 2^k \equiv \left( \sum_{j = 0}^{k-1} 2^j \right) + 2^{k + t/2} + \left( \sum_{j = a}^{n-1} 2^j \right) \pmod {n};
\end{equation*}
consequently, we obtain the set of exponents
\begin{equation*}
  A = \{ 0,1,2,\dots,k-1 \} \cup \{ k + t/2 \} \cup \{ a, a + 1, \dots, n-1 \},
\end{equation*}
while, as before, the set of exponents corresponding to $e(t/2,i)$ is
\begin{equation*}
  B = \{ 0, i, 2i, \dots, t/2 i \} \pmod{n}.
\end{equation*}
We must have $\alpha i \equiv 1 \pmod{n}$ and $\beta i \equiv n-1 \pmod{n}$ for some $1 \le \alpha, \beta \le t/2$. Then $(\alpha + \beta)i \equiv 0 \pmod{n}$, and since $\gcd(i,n) = 1$ by the hypothesis, we get $\alpha + \beta \equiv 0 \pmod{n}$, i.e. $n \mid \alpha + \beta$. But since $1 \le \alpha, \beta \le t/2 = (n-1)/4$, this is impossible.

We have thus shown our claim (for $n=5$, we confirmed it computationally). 
\end{proof}
We now concentrate on the inverse even Niho exponent.
\begin{theorem}
    Let $t > 2$ be an even natural number, $n = 2t + 1$ and $i$ be such that $\gcd(i,n) = 1$. Then the inverse of the even Niho exponent $(2^{t}+2^{t/2}-1)^{-1} \pmod{2^n-1}$ is never cyclotomic equivalent to $e((t+2)/2,i)$ over $\F_{2^n}$.
  \label{propNiho_even_inv}
\end{theorem}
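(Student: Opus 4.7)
The plan is to mirror the approach in the proof of Theorem~\ref{propKasami_inv}. Suppose for contradiction that there exist natural numbers $a$ and $i$ with $\gcd(i,n)=1$ such that
\begin{equation*}
e\!\left(\tfrac{t+2}{2},i\right)\cdot (2^t + 2^{t/2} - 1) \equiv 2^a \pmod{2^n - 1}.
\end{equation*}
Writing $e(l,i) = (2^{li}-1)/(2^i - 1)$ with $l = (t+2)/2$, and noting that $\gcd(2^i-1, 2^n-1) = 1$ so that $2^i - 1$ is invertible modulo $2^n-1$, I would multiply both sides of this congruence by $2^i - 1$ to obtain
\begin{equation*}
(2^{li} - 1)(2^t + 2^{t/2} - 1) \equiv 2^{a+i} - 2^a \pmod{2^n - 1}.
\end{equation*}
Expanding the left-hand side and moving every negative term across yields the symmetric four-against-four identity
\begin{equation*}
2^{li+t} + 2^{li+t/2} + 2^a + 1 \equiv 2^{a+i} + 2^{li} + 2^t + 2^{t/2} \pmod{2^n - 1},
\end{equation*}
which is precisely the shape that Lemma~\ref{lemUniqueExpansion} is designed to attack.

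Next, in the non-degenerate branch in which the four exponents on each side are pairwise distinct modulo $n$, Lemma~\ref{lemUniqueExpansion} forces the multiset identity
\begin{equation*}
\{li+t,\ li+t/2,\ a,\ 0\} \equiv \{a+i,\ li,\ t,\ t/2\} \pmod n.
\end{equation*}
Since $0 < t/2 < t < n$, and since $0 < l < n$ combined with $\gcd(i,n)=1$ prevents $li \equiv 0 \pmod n$, the element $0$ on the left can only match $a+i$ on the right, forcing $a \equiv -i \pmod n$. Running through the six bijections between the remaining subsets $\{li+t,\ li+t/2,\ -i\}$ and $\{li,\ t,\ t/2\}$ modulo $n$ produces in every case one of $t \equiv 0 \pmod n$, $t/2 \equiv 0 \pmod n$, or $li \equiv 0 \pmod n$, each of which contradicts the standing hypotheses. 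Thus the non-degenerate branch yields no solution.

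What remains, and constitutes the principal obstacle, is the systematic treatment of the degenerate configurations in which two or more exponents collapse modulo $n$. On the left, the only non-trivial coincidences available are $a \equiv li+t$, $a \equiv li+t/2$, $a \equiv 0$, $li+t \equiv 0$, and $li+t/2 \equiv 0$; on the right they are $a+i \equiv li$, $a+i \equiv t$, $a+i \equiv t/2$, $li \equiv t$, and $li \equiv t/2$ (the pair $t \equiv t/2$ being ruled out by $t/2 \not\equiv 0 \pmod n$). Following the template of Theorem~\ref{propKasami_inv}, each primary coincidence is imposed as a hypothesis, the corresponding sum compresses, and additional sub-cases are opened according to which extra collisions must then occur on the opposite side in order to keep the compressed Hamming weights aligned. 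Every terminal branch reduces to a short linear system in the unknowns $a$, $li$, $i$, $t$ modulo $n$, whose solutions either contradict $\gcd(i,n)=1$, force $t \le 2$ contrary to the hypothesis, or push $n$ into a finite list of small dimensions that can be verified computationally (as was done for $n = 5$ in the direct Theorem~\ref{propNiho_even}). The casework is routine but voluminous; the practical difficulty lies in the bookkeeping required to ensure that every combination of coincidences is considered exactly once.
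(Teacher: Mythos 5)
Your algebra is correct and your route is genuinely different from the paper's: rather than multiplying by $2^i-1$ to obtain the four-against-four identity $2^{li+t}+2^{li+t/2}+2^a+1\equiv 2^{a+i}+2^{li}+2^t+2^{t/2}\pmod{2^n-1}$ and imitating the proof of Theorem~\ref{propKasami_inv}, the paper invokes the identity $2^t+2^{t/2}-1\equiv 2^{3t/2}\cdot 3\cdot(2^{t+1}+2^{t/2}+1)^{-1}\pmod{2^n-1}$ from the literature and reduces the question to the congruence $3e(l,k)\equiv 2^a(2^{t+1}+2^{t/2}+1)\pmod{2^n-1}$, whose left-hand side has the very structured exponent multiset $\{ki,ki+1: 0\le i\le l-1\}$ and whose right-hand side has only three terms; this makes the compression analysis considerably shorter than the Kasami-inverse template you are proposing to follow. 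Your treatment of the non-degenerate branch (forcing $a+i\equiv 0\pmod n$ and checking the six bijections) is correct as far as it goes.

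However, there is a genuine gap: the degenerate configurations, which you defer as ``routine but voluminous,'' are never actually carried out, and in this family of arguments they cannot be waved away. They are where essentially all of the work and all of the danger lie. Indeed, the inverse even Niho exponent \emph{is} cyclotomically equivalent to an $e(l,k)$ when $n=5$ (the paper exhibits $3\cdot(2^2+2+1)\equiv 2^4\,(2^3+2+1)\pmod{2^5-1}$, among others), and such solutions arise precisely from collapsed exponent sets; so one cannot assert without explicit verification that every degenerate branch terminates in $t\le 2$, a violation of $\gcd(i,n)=1$, or a contradiction. Each primary coincidence you list must be imposed, the corresponding powers of $2$ merged (including secondary carries, since two equal powers compress to the next power and may collide anew), and Lemma~\ref{lemUniqueExpansion} reapplied to the reduced sets --- exactly the multi-page bookkeeping that the paper performs for the Kasami inverse. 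Until that is done, the statement is a plan rather than a proof.
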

\begin{proof}
 We use~\cite[Lemma 3]{BCCDK22}, representing the even Niho exponent power function as the composition of $x^3$ and the inverse of a cubic power function. Precisely, 
\[
2^t+2^{t/2}-1\equiv 2^{3t/2} \frac{3}{2^{t+1}+2^{t/2}+1} \pmod {2^{2t+1}-1}.
\]
To investigate the cyclotomic equivalence of $e(l,k)$, $l<n$ with the even Niho inverse, it is sufficient to consider the congruence ($n=2t+1$, $t$ even)
\[
3 e(l,k)\equiv 2^a(2^{t+1}+2^{t/2}+1) \pmod {2^n-1}
\]
for some positive integers $a<n,l<n$. This is equivalent to 
\begin{equation}
    \label{eq:Niho_even}
\sum_{i=0}^{l-1} 2^{ki}+\sum_{i=0}^{l-1} 2^{ki+1}\equiv 2^{a+t+1}+2^{t/2+a}+2^a\pmod{2^n-1}.
\end{equation}

We start with $k=1$. Equation~\eqref{eq:Niho_even} becomes
\[
1+\sum_{j=2}^{l-1} 2^j+2^{l+1}\equiv 2^{a+t+1}+2^{t/2+a}+2^a\pmod{2^n-1}.
\]
If $l+1=n$, the sets of exponents above are
\begin{align*}
A&=\{1,2,\ldots,l-1\},\text{ all smaller than } n\\
B&=\{t+a+1,t/2+a,a\}\pmod n.
\end{align*}
Thus, $l\leq 4$. First, we take $l=4$.
If $a=1$, then $B=\{1,t/2+1,t+2\}=A=\{1,2,3\}$, which cannot happen (recall that $t=(n-1)/2\geq 2$).
If $t/2+a\equiv 1\pmod n$ (recall that $a<n$), then $a=n+1-t/2=n+1-(n-1)/4=(3n+5)/4$.
The sets are now
$B=\{1,t+a+1=\frac{5n+7}{4}\equiv \frac{n+7}{4}\pmod n,a=\frac{3n+5}{4} \}=\{1,2,3\}$, and that is impossible.
The case of $t+a+1\equiv 1\pmod n$ implies $t+a\equiv 0\pmod n$, and so, $a=n-t=t+1=\frac{n+1}{2}$. Thus, $B=\{1,\frac{n+1}{2},\frac{3n+1}{4}\}=\{1,2,3\}$, which cannot happen. We can theoretically argue it, but to simplify the argument, if $l=2,3$ (hence $n=3,4$), we checked computationally that our congruence~\eqref{eq:Niho_even} is impossible.

If $l+1<n$, then the set $A=\{0,2,\ldots,l-1,l+1\}$ contains only distinct exponents and so, $l=3$ and $A=\{0,2,4\}$. First, $a=0$ is impossible. If $t+a+1\equiv 0\pmod n$, then $a=n-t-1=\frac{n-1}{2}=t$, and so, $B=\{0,a=\frac{n-1}{2},t/2+a=\frac{3(n-1)}{4}\}$, which cannot be equal to~$A$. If $t/2+a\equiv 0\pmod n$, then $a=n-t/2=\frac{3n+1}{4}$, and so,
$B=\{0,a=\frac{3n+1}{4},t+a+1\equiv\frac{n+3}{4}\pmod n \}$, which can equal $A$ if and only if $n=5,a=4,l=3$, which is possible and the congruence becomes $3\cdot (2^2+2+1)\equiv 2^4 \cdot (2^3+2+1)\pmod {2^5-1}$.

We now let $k>1$. The set of exponents in the congruence~\eqref{eq:Niho_even}  are
\begin{align*}
A&=\{0,1,k,k+1,2k,2k+1,\ldots,(l-1)l,(l-1)k+1\}\Mod n,\\
B&=\{a,t+a+1,t/2+a\}\Mod n.
\end{align*}
For the set $A$ to compress we need to have $s_1<s_2<n$ such that either $s_2k\equiv s_1k\pmod n$, which is impossible since $\gcd(k,n)=1$, $s_2k\equiv s_1k+1\pmod n$, or $s_2k+1\equiv s_1k\pmod n$. These last two cases are treated similarly, so we only deal with the first one. We take $s_2k\equiv s_1k+1\pmod n$ and $s_2$ smallest with this property. Thus, $(s_2-s_1)k\equiv 1\pmod n$, $s_2-s_1<n$. Surely, the two elements $s_1k+1,s_2k$ compress into $s_1k+2$, which occurs by itself in $A$, since $k>1$, so no compression occurs. Now, the same will happen for all the remaining exponents above $s_2k+1$, since $(s_2+j)k\equiv (s_1+j)k+i\pmod n$. If $s_1>0$, the set $A$ cannot compress to only three exponents as in the set $B$, so we must have $s_1=0,k=n-1$. It follows that $A=\{0,1, n-1,n,2(n-1),2(n-1)+1\}\pmod n=\{0,2,3\}$, which renders the case $n=5,k=4,l=3,a=2$, that is, the congruence $3\cdot(2^8+2^4+1)\equiv 2^2 (2^3+2+1)\pmod {2^5-1}$.

We thus have our claim (we computationally checked that the two cases do happen for the inverse of the Niho exponent).
\end{proof}

To conclude, we observe that for $t = 2$, i.e. $n = 5$, the Niho exponent or its inverse can be equivalent to $e(l,k)$. These are, in fact, precisely the exponents for $n = 5$ that coincide with the Kasami family. 

\subsection{Niho odd case}

In this case, the exponent is of the form $2^t + 2^{(3t+1)/2} - 1$ for $t$ odd, with $n = 2t + 1$. The algebraic degree is $t + 1$. 

\begin{theorem}
Let $t > 1$ be an odd natural number, $n = 2t + 1$ and $k$ be such that $\gcd(k,n) = 1$. Then the odd Niho exponent $2^{t}+2^{\frac{3t+1}{2}}-1$ can never be in the cyclotomic coset of $e(l,k)$ over $\F_{2^n}$.
  \label{propNiho_odd}
\end{theorem}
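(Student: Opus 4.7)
The plan is to apply Lemma~\ref{lemUniqueExpansion} and reduce the problem to a combinatorial question about arithmetic progressions modulo $n = 2t+1$. Suppose that $2^a \cdot (2^t + 2^u - 1) \equiv e(l, k) \pmod{2^n-1}$ for some $a$, where $u = (3t+1)/2$. Writing the Niho exponent as $2^u + \sum_{j=0}^{t-1} 2^j$, we see it has Hamming weight $t+1$. By Observation~\ref{obsCoprime} and the periodicity $e(l+n, k) \equiv e(l, k) \pmod{2^n-1}$ (which holds since $\gcd(k,n) = 1$), we may restrict to $l < n$, in which case $e(l, k)$ has weight $l$, forcing $l = t+1$. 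Since both sides have $t+1$ distinct exponents modulo $n$, Lemma~\ref{lemUniqueExpansion} applies and implies that the set $A := \{0, 1, \ldots, t-1, u\}$ is equal, as a subset of $\Z/n$, to a shift of $\{0, k, 2k, \ldots, tk\} \Mod n$. Equivalently, $A$ itself is an arithmetic progression modulo $n$ of length $t+1$ with common difference $k$.

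The heart of the argument is showing that $A$ cannot be such an AP unless $k \equiv \pm 1 \pmod n$. Write $A = \{\alpha + ik \Mod n : 0 \le i \le t\}$, and for each $j \in \{0, \ldots, t-1\}$ let $p_j \in \{0, 1, \ldots, t\}$ be the unique index such that $\alpha + p_j k \equiv j \pmod n$. Subtracting the relations for consecutive $j$ gives $(p_{j+1} - p_j) k \equiv 1 \pmod n$, so every difference $p_{j+1} - p_j$ is congruent to $k^{-1}$ modulo $n$. Each such difference lies in $[-t, t]$, and since $n = 2t + 1$, the interval $[-t, t]$ contains exactly one representative of each residue class modulo $n$. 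Hence there is a unique $s \in [-t, t]$ with $s \equiv k^{-1} \pmod n$, and all $p_{j+1} - p_j$ equal $s$. Thus $p_0, \ldots, p_{t-1}$ is an AP of length $t$ with integer common difference $s$ fitting inside $\{0, 1, \ldots, t\}$, forcing $|s|(t - 1) \le t$. For $t \ge 3$, this gives $|s| = 1$, so $k \equiv \pm 1 \pmod n$.

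Finally, I would rule out $k \equiv \pm 1$. In either case, $A$ must be a set of $t+1$ consecutive residues modulo $n$. But sorting $A = \{0, 1, \ldots, t-1, u\}$ in $[0, n)$ gives the two runs $\{0, 1, \ldots, t-1\}$ and $\{u\}$, so $A$ is a block of consecutive residues modulo $n$ only if $u \equiv t$ or $u \equiv n - 1 = 2t \pmod n$. The first forces $(3t+1)/2 = t$, i.e., $t = -1$, and the second forces $(3t+1)/2 = 2t$, i.e., $t = 1$; both contradict the hypothesis $t > 1$. The main obstacle I anticipated — replicating the heavy case-by-case analysis seen in the Kasami or Welch-inverse proofs — is circumvented by the position-difference trick, which is available precisely because the interval $[-t, t]$ has exactly $n = 2t+1$ integers and thus maps bijectively onto $\Z/n$.
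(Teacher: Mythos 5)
Your proof is correct, and it takes a genuinely different route from the one in the paper. The paper first multiplies the congruence by $2^k-1$ so that both sides collapse to three or four powers of $2$, and then applies Lemma~\ref{lemUniqueExpansion} to these small exponent sets; the price is a lengthy enumeration of degenerate cases in which exponents coincide modulo $n$. You instead keep the full weight-$(t+1)$ expansions: after reducing $l$ modulo $n$ (your periodicity claim $e(l+n,k)\equiv e(l,k)$ is valid since the extra $n$ terms sum to $2^n-1$) and matching weights to force $l=t+1$, both exponent multisets consist of $t+1$ elements that are automatically distinct modulo $n$, so Lemma~\ref{lemUniqueExpansion} applies with no degenerate cases at all. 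The heart of your argument --- that the positions $p_j$ of $0,1,\dots,t-1$ inside the progression $\{\alpha+ik\}_{i=0}^{t}$ have constant integer difference $s\equiv k^{-1}\pmod n$ because $[-t,t]$ is a complete residue system, and that an AP of length $t$ and step $|s|$ cannot fit in $\{0,\dots,t\}$ unless $|s|=1$ --- is a clean rigidity/pigeonhole argument that replaces all of the paper's casework, and the endgame ($A=\{0,\dots,t-1\}\cup\{u\}$ is a cyclic interval only if $u=t$ or $u=2t$, forcing $t=-1$ or $t=1$) is airtight. What the paper's approach buys in exchange for its length is uniformity: multiplying by $2^k-1$ to shrink the exponent sets is the same device used for the inverse Niho and inverse Kasami cases, where no explicit short binary expansion of the exponent is available; your method exploits the fact that the odd Niho exponent's binary support is an interval plus one point, which is special to this case. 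Two cosmetic points: you should note explicitly that $s\neq 0$ (immediate since $k^{-1}\not\equiv 0$), and that $t\le u\le 2t$ so that the support of the Niho exponent really is $\{0,\dots,t-1\}\cup\{u\}$ with $u$ outside the initial run; both are one-line checks.
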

\begin{proof}
Suppose that there exists some $a \le n-1$ such that $2^t + 2^{\frac{3t+1}{2}} - 1$ is congruent with $2^ae(l,k)$ for some $l<n,1\leq k$. From Lemma~\ref{lem2}, we can assume that $1\leq k\leq \frac{n+1}{2}$.  Writing $e(l,k)=\frac{2^{lk}-1}{2^k-1}$ and multiplying throughout by $2^k-1$, we get
\[
2^{k+t}+2^{\frac{3t+1}{2}+k}-2^k-2^t-2^{\frac{3t+1}{2}}+1\equiv 2^{lk+a}-2^a \pmod {2^n-1},
\]
that is
\begin{equation}
  2^{\frac{3t+1}{2}+k}+2^{k+t} +2^a+1\equiv
  2^{lk+a}+2^k+2^t+2^{\frac{3t+1}{2}}\pmod{2^n-1}.
\end{equation}

If $a=0$, the sets of exponents are
\[
A=\left\{1,k+t,\frac{3t+1}{2}+k\right\}\Mod n, B=\left\{k,t,\frac{3t+1}{2},lk\right\}\Mod n.
\]
We know that $k+t\leq n$ (since $k\leq \frac{n+1}{2}$ and $t=\frac{n-1}{2}$). If $k+t=n$, that is, $k=\frac{n+1}{2}$, then 
$A=\{0,1, \frac{n+1}{4}\}$ and $B=\{\frac{n-1}{2}, \frac{n+1}{2},\frac{3n-1}{4},\frac{l(n+1)}{2}\}$. It follows that $l(n+1)\equiv l\equiv 0\pmod n$, but that is impossible, since $1<l<n$.

Next, let $k+t<n$ and assume that $A$ does not compress modulo $n$ (that ultimately means that $\frac{3t+1}{2}+k\not\equiv 1\pmod n$, since $n>k+t>1$). If $k=1$, the two sets of exponents are $\{1,t+1,\frac{3t+3}{2}\}$ and $\{1,t,\frac{3t+1}{2},l\}$, which cannot possibly be equal. If $t=1,k>1$, then the two sets become $A=\{1,k+1,k+2\}$, $B=\{1,k,2,l\}$, which is not possible.
If $lk\equiv 1\pmod n$, the two sets become $\{1,\frac{3n-1}{4},0\}$ and $\{\frac{n+1}{4},\frac{n-1}{2},1\}$, which yet again is not possible (since $0$ cannot be equated to anything in~$B$).

Next, we assume that $A$ compresses, that is, $\frac{3t+1}{2}+k\equiv 1\pmod n$. Thus, $k=\frac{n+5}{4}$, $k+t\geq 3$ and so,
$A=\{2,\frac{3(n+1)}{4}\}$, $B=\{\frac{n+5}{4},\frac{n-1}{2},\frac{3n-1}{4},l\frac{n+5}{4}\Mod n \}$. Going through the possibilities (for $n>3$), we see that there are no values of $n$ for which the two sets match. 

We next assume that $a>0$.
If $lk+a<n$ and $\frac{3t+1}{2}+k<n$, then the sets of exponents must be the same (without modulation), but that is impossible since $0$ cannot be any of the exponents $k,t,\frac{3t+1}{2},lk+a$. Therefore, either  $lk+a\geq n$ or $\frac{3t+1}{2}+k\geq n.$ If $\frac{3t+1}{2}+k<n$, but $lk+a\geq n$, then $lk+a\equiv 0\pmod n$ (observe that $A$ cannot remove 0 by possible compression, since $k+t<\frac{3t+1}{2}+k<n$ and either of the cases $a=k+t=n-2, \frac{3t+1}{2}+k=n-1$, when $2^a+2^{k+t}+2^{\frac{3t+1}{2}+k}=2^n\equiv 2^0\pmod {2^n-1}$, or $a=\frac{3t+1}{2}+k=n-2, k+t=n-1$, or $a=\frac{3t+1}{2}+k=n-1$, or $a=k+t=n-1$,  will all render contradictions) and the sets of exponents become
\begin{align*}
A=\left\{0,a,k+t,\frac{3t+1}{2}+k\right\}, B=\left\{0,k,t,\frac{3t+1}{2}\right\}.
\end{align*}
Surely, the only possibility is for $k$ to be equal to $a$, and the same is true for $t$, so that $k=a=t$, but then $\frac{3t+1}{2}+k\geq 2t+1$, a contradiction.
 
If $\frac{3t+1}{2}+k\geq n$ and $lk+a< n$, since $k\leq \frac{n+1}{2}$, then $\frac{3t+1}{2}+k\leq \frac{5n+1}{4}=n+\frac{n+1}{4}$, and so, we must have $\frac{3t+1}{2}+k=n$ (otherwise, $0$ remains in $A$, and that should not be the case as $B$ cannot contain~$0$), that is, $k=\frac{n+1}{4}$ (so, $k+t<n$). The sets of exponents become now (the two copies of $0$ in $A$ compress to a $1$)
\[
A=\left\{1,a,k+t=\frac{3n-1}{4} \right\}, B=\left\{k,t,\frac{3t+1}{2}=\frac{3n-1}{4},lk+a\right\}.
\]
If $a=1$ (recall that $n>3$ and  $3\leq k+t<n$), then $A=\{1,1,\frac{3n-1}{4}\}=\{2,\frac{3n-1}{4}\}$ and (since all of its elements are smaller than~$n$, then $k=t=1$) $B=\{1,1,\frac{3n-1}{4},l+1\}=\{2,\frac{3n-1}{4},l+1\}$ (by compression and taking modulo $2^n-1$). Surely, that is not possible. If $1<a<n$, then only $k,t, lk\pmod n$ can be $1$, but they all lead to contradiction.

 It remains to look at the case $\frac{3t+1}{2}+k\geq n$ and $lk+a\geq n$. As remarked before, we must have $lk+a\equiv 0\pmod n$, so the two sets of exponents become
\[
A=\left\{0,a,k+t,\left(\frac{3t+1}{2}+k\right)\Mod n\right\}, B=\left\{0,k,t,\frac{3t+1}{2}\right\}.
\]
We observe that $k,t$ are both smaller than  $k+t$ and $\frac{3t+1}{2}+k\leq \frac{3n-1}{4}+\frac{n+1}{2}=n+\frac{n-1}{4}$. Thus, $\left(\frac{3t+1}{2}+k\right)\pmod n=\frac{3t+1}{2}+k -n=k-\frac{n+1}{4}$. The two sets become 
\[
A=\left\{0,a,k-\frac{n+1}{4},k+\frac{n-1}{2}\right\}, 
B=\left\{0,k,\frac{n-1}{2},\frac{3n-1}{4}\right\}.
\]
and so, $k=a$ is the only possibility, as well as, $k+\frac{n-1}{2}=\frac{3n-1}{4}$, and so, $k=\frac{n+1}{4}$. Thus,
\[
A=\left\{ 0,\frac{n+1}{4},0,\frac{3n-1}{4}\right\}, 
B=\left\{0, \frac{n+1}{4},\frac{n-1}{2},\frac{3n-1}{4}\right\}
\]
This is only possible for $n=3$, when the sets compress to $A=B=\{0\}$.

The proof is done.
\end{proof}

\begin{theorem}
    Let $t > 1$ be an odd natural number, $n = 2t + 1$ and $k$ be such that $\gcd(k,n) = 1$. Then the inverse of the odd Niho exponent $(2^{t}+2^{\frac{3t+1}{2}}-1)^{2^n-1}$ can never be cyclotomic equivalent to $e(l,k)$ over $\F_{2^n}$.
  \label{propNiho_odd_inv}
\end{theorem}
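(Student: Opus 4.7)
The plan is to mirror the strategy used for the non-inverse odd Niho case (Theorem \ref{propNiho_odd}) and for the earlier inverse theorems such as Theorem \ref{propNiho_even_inv}. Assume that
\[
(2^t + 2^{(3t+1)/2} - 1) \cdot e(l,k) \equiv 2^a \pmod{2^n-1}
\]
for some admissible $a,l,k$. By Lemma \ref{lemHalfDimension} we may restrict to $1 \le k \le (n+1)/2$. Writing $e(l,k) = (2^{lk}-1)/(2^k-1)$ and multiplying the congruence by $2^k-1$, expansion and regrouping of all negative terms onto the opposite side yield the balanced identity
\[
2^{t+lk} + 2^{(3t+1)/2+lk} + 2^a + 1 \equiv 2^{a+k} + 2^{lk} + 2^t + 2^{(3t+1)/2} \pmod{2^n-1}.
\]

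I would first dispose of the $k=1$ case by a direct cascading-compression analysis of the exponents on both sides, much as in the inverse Welch proof: the terms coming from $2^a\cdot 2^k$ versus $2^{lk}$ collapse into short runs of consecutive bits whose positions, together with $\{t, (3t+1)/2\}$, force $n$ into a tiny range that is ruled out by the hypothesis $t > 1$ (with the small residual cases verified directly). For $k \ge 2$ (so in particular $\gcd(k,n) = 1$ forbids same-residue-class collisions on each side unless they are explicit), I would apply Lemma \ref{lemUniqueExpansion} to the $4$-element multisets
\[
A = \{t+lk,\ (3t+1)/2+lk,\ a,\ 0\} \Mod n,\quad B = \{a+k,\ lk,\ t,\ (3t+1)/2\} \Mod n,
\]
after first treating the degenerate scenarios in which two or more elements of $A$ (or of $B$) coincide modulo $n$, so that the corresponding side compresses.

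In each surviving sub-case, enforcing $A \equiv B \pmod n$ and substituting $n = 2t+1$ produces a short linear system modulo $n$. Following the same bookkeeping as in the proofs of Theorems \ref{propNiho_odd} and \ref{propNiho_even_inv}, each such system forces either $t \le 1$ (excluded by hypothesis), or $k \equiv 0 \pmod n$, or $l \equiv 0 \pmod n$; the last two again contradict the standing assumptions on $k$ and $l$.

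The main obstacle is precisely the one highlighted in the introduction: although every individual sub-case is conceptually elementary, the combinatorial multiplicity of ``which exponent exceeds $n$'' scenarios, combined with the degenerate collision patterns for $A$ and $B$, produces on the order of a couple of dozen branches that must each be dispatched by short modular juggling. No new mathematical idea beyond the combination of Lemma \ref{lemUniqueExpansion} with the cross-multiplication by $2^k-1$ trick is needed; the difficulty is purely combinatorial bookkeeping.
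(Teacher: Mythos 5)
Your setup is sound and the balanced identity you derive from cross-multiplying by $2^k-1$ is correct, but your route is genuinely different from the paper's. The paper does not attack $(2^t+2^{(3t+1)/2}-1)\,e(l,k)\equiv 2^a$ directly; instead it invokes the identity (from \cite[Lemma 6]{BCCDK22}) that $2^{(3t+1)/2}+2^t-1$ lies in the cyclotomic coset of $3/(2^t+2^{(t-1)/2}+1)$, so that the whole problem reduces to the congruence $3e(l,k)\equiv 2^a\bigl(2^t+2^{(t-1)/2}+1\bigr)\pmod{2^n-1}$. The payoff is asymmetry: the right-hand side has Hamming weight $3$, while the left-hand side $\sum_i 2^{ki}+\sum_i 2^{ki+1}$ is a highly structured $2l$-term sum (adjacent pairs in an arithmetic progression of exponents mod $n$) whose compression behaviour can be analysed almost uniformly, largely recycling the computations from the even Niho inverse case. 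Your approach instead lands on a symmetric $4$-versus-$4$ comparison of exponent sets, which is exactly the shape of the Kasami-inverse proof (Theorem \ref{propKasami_inv}) — workable, and indeed your non-degenerate main case closes quickly (with $a+k\equiv 0$, summing the elements of $A$ and $B$ forces $(l-1)k\equiv 0$, hence $l=1$, which contradicts the weight of the Niho exponent), but the degenerate collision branches are where essentially all the labour lives, and you have deferred every one of them with an assertion that they "force $t\le 1$, $k\equiv 0$, or $l\equiv 0$." Given that the paper explicitly abandoned this style of argument for the Dobbertin inverse precisely because of the blow-up in degenerate cases, you should either carry out those branches in full or adopt the paper's weight-$3$ reduction, which is what makes the odd Niho inverse tractable in a few lines rather than several pages.
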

\begin{proof}
We use~\cite[Lemma 6]{BCCDK22}, namely, for $t$ odd, we have
\[
2^{\frac{3t+1}{2}}+2^t-1\equiv 2^{\frac{3t-1}{2}}\frac{3}{2^t+2^{\frac{t-1}{2}}+1}\pmod {2^{2t+1}-1}.
\]
As such, it will be sufficient to investigate the congruence (for positive integers $a,k,l<n$),
\begin{equation}
\label{eq:Niho_odd_inv}
3e(l,k)\equiv 2^a(2^t+2^{\frac{t-1}{2}}+1)\pmod {2^n-1}.
\end{equation}
We are going to use some computations done for the inverse even Niho case. If $k=1$, we need to check 
\[
\sum_{i=0}^{l-1} 2^{ki}+\sum_{i=0}^{l-1} 2^{ki+1}\equiv 2^{a+t}+2^{\frac{t-1}{2}+a}+2^a\pmod{2^n-1}.
\]
If $k=1$, the equation becomes
\[
1+\sum_{j=2}^{l-1} 2^j+2^{l+1}\equiv 
2^{a+t}+2^{\frac{t-1}{2}+a}+2^a\pmod{2^n-1}.
\]
If $l+1=n$ (recall that $l<n$), then the corresponding sets of exponents in the congruence are
\begin{align*}
A&=\{1,2,\ldots,l-1\},\text{ all smaller than } n,\\
B&=\{a,(t-1)/2+a,t+a\}\Mod n,
\end{align*}
which implies that $l\leq 4$, and $A=\{1,2,3\}$ (if $A=\{1,2\}$, or $A=\{1\}$, we quickly see that it is not possible). If $a=1$, then $(t-1)/2=1$ and $t=2$, or, $(t-1)/2=2$ and $t=1$, which are both impossible. The other cases can not happen either, since we are dealing with positive integers.

If $l+1<n$, then, as argued before, the set $A$ contains only distinct exponents and so, $l=3$ and $A=\{0,2,4\}$. Thus, $a=0$, $B=\{ 0,(t-1)/2,t\}$, which cannot match $A$.

We next assume that $k>1$. As in the even Niho case,
\begin{align*}
A&=\{0,1,k,k+1,2k,2k+1,\ldots,(l-1)l,(l-1)k+1\}\Mod n,\\
B&=\{a,(t-1)/2+a,t+a\}\Mod n.
\end{align*}
This congruence can be handled using a similar method based on compressing the exponent sets. In all cases, we obtain contradictions or trivial results only.

We thus have the proof of our theorem.
\end{proof}

Once again, we can see that for $t = 1$, i.e. $n = 3$, the odd Niho exponents coincide with the Gold exponents, and this is the only case in which they can be expressed as $e(l,k)$.

\subsection{Dobbertin case}

The Dobbertin exponent is $D_t = 2^{4t} + 2^{3t} + 2^{2t} + 2^t - 1$ for $n = 5t$. Note that in this case we have to consider $k$ with $\gcd(k,n) = 2$ in addition to $\gcd(k,n) = 1$ since the Dobbertin exponent can be defined for even as well as odd dimensions. Nonetheless, by Lemma \ref{lem:gcd2}, it suffices to consider $l = \wt(D_t)$.

\begin{theorem}
    Let $t > 2$ be a natural number and $n = 5t$. Then the Dobbertin exponent $D_t=2^{4t} + 2^{3t} + 2^{2t} + 2^t - 1$ over $\F_{2^n}$, can never be cyclotomic equivalent to $e(l, k)$ for $t > 2$ and any $k$ with $\gcd(k,n) \le 2$.
  \label{propDobbertin}
\end{theorem}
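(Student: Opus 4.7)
The plan is to match the binary expansion of $D_t$ modulo $2^{5t}-1$ against that of $2^a e(l,k)$ via Lemma~\ref{lemUniqueExpansion}, and then derive a contradiction from the resulting set equation by counting multiples of $t$. First, I would write
\[
D_t = 2^{4t}+2^{3t}+2^{2t}+\sum_{j=0}^{t-1}2^j,
\]
so that the exponent set of $D_t$ reduced modulo $n=5t$ is $E_D=\{0,1,\ldots,t-1,2t,3t,4t\}$, consisting of $t+3$ pairwise distinct residues for $t>2$. Because cyclotomic equivalence preserves the weight of the representative modulo $2^n-1$, Observation~\ref{obsCoprime} (when $\gcd(k,n)=1$) and Lemma~\ref{lem:gcd2} (when $\gcd(k,n)=2$) force $l=t+3$: the other cases of Lemma~\ref{lem:gcd2} yield weights $n/2$ or $n/2+m$, which under $t>2$ cannot equal $t+3$.

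The case $\gcd(k,n)=2$ then collapses by parity. Here $n=5t$ is even, hence $t$ is even, and $k$ is even as well, so every residue $(a+jk)\Mod n$ has the same parity as $a$. This forces the exponent set of $2^ae(l,k)$ modulo $n$ to be monochromatic, contradicting the fact that $E_D$ contains both the even element $0$ and the odd element $1$.

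In the main case $\gcd(k,n)=1$, the residues $\{(a+jk)\Mod n : 0\le j\le t+2\}$ are pairwise distinct, so Lemma~\ref{lemUniqueExpansion} gives the set equality
\[
\{(a+jk)\Mod 5t : 0\le j\le t+2\} = E_D.
\]
Setting $m=k^{-1}\Mod 5t$ and multiplying through (a bijection of $\mathbb{Z}/5t\mathbb{Z}$), the left-hand side becomes an interval of $t+3$ consecutive residues. It therefore suffices to show that
\[
S = m\cdot E_D = \{0,m,2m,\ldots,(t-1)m,2tm,3tm,4tm\}\Mod 5t
\]
is never such an interval.

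The decisive step is a count of multiples of $t$ inside $S$. Since $\gcd(m,t)$ divides $\gcd(m,5t)=1$, the residues $jm\Mod t$ for $1\le j\le t-1$ are all nonzero, so the arithmetic-progression part $\{0,m,\ldots,(t-1)m\}$ contributes exactly one multiple of $t$, namely $0$. On the other hand, since $\gcd(m,5)=1$, the values $\{2m,3m,4m\}\Mod 5$ are three distinct nonzero elements of $\{1,2,3,4\}$, so $\{2tm,3tm,4tm\}\Mod 5t$ are three distinct nonzero multiples of $t$. Therefore $S$ contains exactly $4$ of the $5$ multiples of $t$ in $\mathbb{Z}/5t\mathbb{Z}$. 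If $S$ were an interval of length $t+3$, its complement would be an interval of length $4t-3$ containing the single remaining multiple of $t$; but any interval of length $L$ in $\mathbb{Z}/5t\mathbb{Z}$ intersects the evenly spaced set $\{0,t,2t,3t,4t\}$ in at least $\lfloor L/t\rfloor$ points, and for $L=4t-3$ and $t\ge 3$ this minimum is $3$. The feared obstacle---the kind of degenerate-case analysis that makes the Dobbertin inverse intractable---does not arise here, because $E_D$ has no collisions modulo $n$ when $t>2$, so Lemma~\ref{lemUniqueExpansion} applies cleanly and the multiples-of-$t$ count supplies the contradiction in a single stroke.
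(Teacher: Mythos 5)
Your proof is correct, and it takes a genuinely different route from the paper's. The paper keeps the power-of-two shift on the Dobbertin side, writing $2^{4t+a}+2^{3t+a}+2^{2t+a}+2^{t+a}-2^a$, and therefore has to split into five cases according to which of $t+a,\,2t+a,\,3t+a,\,4t+a$ exceed $n$, recomputing the exponent set each time and deriving a contradiction case by case from differences $(\alpha-\beta)i$ that would have to land in the set but cannot; moreover, in its final case the sub-case $\gcd(i,n)=2$ is only settled by a computational check for $t<12$. You avoid all of this by two normalizations: placing the shift on the $e(l,k)$ side (so the Dobbertin exponent set $\{0,\dots,t-1\}\cup\{2t,3t,4t\}$ is computed once, with no reductions and no collisions for $t>2$), and multiplying the resulting set identity by $k^{-1}$ so that the $e(l,k)$ side becomes an interval of $t+3$ consecutive residues. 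Your count of multiples of $t$ --- exactly four in $k^{-1}E_D$ (the element $0$ from the progression part, since $\gcd(k^{-1},t)=1$, plus three distinct nonzero multiples from $\{2tk^{-1},3tk^{-1},4tk^{-1}\}$ since $\gcd(k^{-1},5)=1$), versus at least three forced into the length-$(4t-3)$ complementary interval --- is a clean global obstruction that replaces the entire case analysis, and your parity argument disposes of $\gcd(k,n)=2$ without the computational appeal (the absence of compression there is guaranteed by $l=t+3<n/2$, which is exactly what makes the weight formula of Lemma~\ref{lem:gcd2} give $\wt=l$). The trade-off is that the paper's template is the uniform one it reuses for every family, whereas your argument exploits the specific shape of the Dobbertin exponent (a block of $t$ consecutive bits plus three multiples of $t$); for this theorem, your version is shorter, sharper, and fully theoretical.
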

\begin{proof}
Let us assume that the Dobbertin exponent is cyclotomic equivalent to $e(t+3,i)$ for some $i$. Then there exists some $a < n$ such that
\begin{equation*}
  2^{4t + a} + 2^{3t + a} + 2^{2t + a} + 2^{t + a} - 2^a \equiv e(t+3,i) \pmod{n}.
\end{equation*}
As before, we divide the proof into several cases depending on which of the exponents of the terms of the Dobbertin exponent need to be modulated.

\noindent\textbf{Case 1:} If $4t + a < n$, then no modulation is necessary, and the Dobbertin exponent is of the form
\begin{equation*}
  2^{4t + a} + 2^{3t + a} + 2^{2t + a} + \sum_{j = a}^{t+a-1} 2^j,
\end{equation*}
giving the set of exponents
\begin{equation*}
  A = \{ a, a +1, a+2, \dots, a + t - 1 \} \cup \{ 2t + a, 3t + a, 4t + a \}.
\end{equation*}
The set of exponents of $e(t+3,i)$ is of course
\begin{equation*}
  B = \{ 0, i, 2i, \dots, (t+3)i \} \Mod{n}.
\end{equation*}
Since $0 \in B$, then $0 \in A$, and we can only have $a = 0$ unless $t = 0$. Thus, the set $A$ becomes
\begin{equation*}
  A = \{ 0, 1, 2, \dots, t-1 \} \cup \{ 2t, 3t, 4t \}.
\end{equation*}
Since $A \equiv B \pmod{n}$, we must have some $1 \le \alpha, \beta \le t+3$ such that $\alpha i \equiv 1 \pmod{n}$ and $\beta i \equiv 4t \pmod{n}$. Now, $(\beta - \alpha)i  \equiv 4t - 1 \pmod{n}$, and $4t - 1$ is clearly not in $A$ unless $t \le 2$. Thus, we must have $\beta - \alpha \notin \{ 0, 1, \dots, t+3 \}$. Since $\alpha, \beta \le t+3$, this can only happen if $\alpha > \beta$. In this case, $\alpha - \beta \in \{ 0, 1, \dots, t+3 \}$, and $(\alpha - \beta)i \equiv 1 - 4t \equiv 1 - 4t + 5t \equiv t + 1 \pmod{n}$, which is also not in $A$. We have thus obtained a contradiction.

\noindent\textbf{Case 2:} If $3t + a < n \le 4t + a$, let $k = 4t + a - n$. The Dobbertin exponent becomes
\begin{equation*}
  2^{3t + a} + 2^{2t + a} + 2^{t + a} - 2^a + 2^k = 2^{3t + a} + 2^{2t + a} + \left( \sum_{j = a}^{t+a-1} 2^j \right) + 2^k,
\end{equation*}
since $k < a$, or, equivalently $4t + a - n < a$, i.e. $4t < n$. The set of exponents is
\begin{equation*}
  A = \{ a, a+1, a+2, \dots, a+t-1 \} \cup \{ k, 2t + a, 3t + a \}.
\end{equation*}
Once again, $0 \in B$, and we can not have $a = 0$ since then $4t + a$ is always less than $n$. Consequently, we must have $k = 0$, i.e. $4t + a = n = 5t$, i.e. $a = t$. The set $A$ becomes
\begin{equation*}
  \{ t, t+1, t+2, \dots, 2t-1 \} \cup \{ 0, 3t, 4t \}.
\end{equation*}
We must have $\alpha, \beta \in \{0,1,\dots,t+3\}$ such that $\alpha i \equiv t+1 \pmod{n}$ and $\beta i \equiv 4t \pmod{n}$. Then $(\beta - \alpha)i \equiv 3t - 1 \pmod{n}$ and $(\alpha - \beta)i \equiv 1 - 3t \equiv 2t + 1 \pmod{n}$, with either $\alpha - \beta$ or $\beta - \alpha$ being in $\{ 0,1,2,\dots,t+3\}$, and neither of $2t + 1$ and $3t - 1$ being in $A$. We have thus reached a contradiction.

\noindent\textbf{Case 3:} If $2t + a < n < 3t + a$, then let $k = 3t + a - n$. The exponent $4t + a = 3t + a + t$ is congruent with $k + t$ modulo $n$, and $k + t < n$ since $k + t = 4t + a - n < n$, i.e. $a < 2n - 4t = n + t$. The Dobbertin exponent thus becomes
\begin{equation*}
  2^{a + 2t} + 2^{a+t} - 2^a + 2^{k+t} + 2^k,
\end{equation*}
giving the set of exponents
\begin{equation*}
  A = \{ a, a+1, a+2, \dots, a+t-1 \} \cup \{ k, k+t, a+2t \}.
\end{equation*}
The only possible element in $A$ that can be equal to $0 \in B$ is $k = 0$, so that $a + 3t = n = 5t$, i.e. $a = 2t$. Then the set $A$ becomes
\begin{equation*}
  A = \{ 2t, 2t+1, 2t+2, \dots, 3t-1 \} \cup \{ 0, 3t, 4t \}.
\end{equation*}
Now, $\alpha i \equiv 2t + 1 \pmod{n}$ and $\beta i \equiv 3t \pmod{n}$ for some $1 \le \alpha, \beta \le t+3$, and so $(\beta - \alpha)i \equiv t-1$, which is not in $A$; $(\alpha - \beta)i \equiv 1-t \equiv 4t - 1$ which is also not in $A$, and since one of $\alpha - \beta$ and $\beta - \alpha$ must lie in $\{ 0, 1, 2, \dots, t+3 \}$, we have reached a contradiction.

\noindent\textbf{Case 4:} If $a+t < n < a + 2t$, then let $k = a + 2t - n$. As before, the Dobbertin exponent modulo $n$ becomes
\begin{equation*}
  2^{a + t} - 2^a + 2^{k + 2t} + 2^{k+t} + 2^k,
\end{equation*}
and it is easy to verify that $k + 2t < n$ and that $k + 2t < a$. The exponent thus becomes
\begin{equation*}
  \left( \sum_{j = a}^{a+t-1} 2^j \right) + 2^{k + 2t} + 2^{k+t} + 2^k,
\end{equation*}
and hence
\begin{equation*}
  A = \{ a, a+1, a+2, \dots, a+t-1 \} \cup \{ k, k+t, k+2t \}.
\end{equation*}
As before, we must necessarily have $k = 0$, so we get $a + 2t = n = 5t$, i.e. $a = 3t$. The set $A$ becomes
\begin{equation*}
  A = \{ a, a+1, \dots, a+t-1 \} \cup \{ 0, t, 2t \}.
\end{equation*}
Taking $\alpha$ and $\beta$ such that $\alpha i \equiv 3t + 1 \pmod{n}$ and $\beta i \equiv t \pmod{n}$ then leads to a contradiction as before.

\noindent\textbf{Case 5:} If $a + t > n$, then let $k = a + t - n$. We can see that $4t + a \equiv 3t + k \pmod{n}$, and $3t + k < n$, so that the exponent becomes
\begin{equation*}
  -2^a + 2^{3t + k} + 2^{2t + k} + 2^{t + k} + 2^k = -(2^a - 2^{3t + k}) + 2^{2t + k} + 2^{t + k} + 2^k.
\end{equation*}
Using Observation~\ref{obsBinaryComplement}, the above becomes
\begin{equation*}
  \left( \sum_{j = 0}^{3t + k - 1} 2^j \right) + \left( \sum_{j = a}^{n-1} 2^j \right) + 2^{2t + k} + 2^{t + k} + 2^k.
\end{equation*}
This becomes
\begin{equation*}
  \left( \sum_{j = 0}^{k-1} 2^j \right) + 2^{3t + k} + \left( \sum_{j = a}^{n-1} 2^j \right) + 2^{2t +k} + 2^{t + k},
\end{equation*}
giving the set of exponents
\begin{equation*}
  A = \{ 0,1, \dots, k-1 \} \cup \{ a, a+1, \dots, n-1 \} \cup \{ k + t, k + 2t, k + 3t \}.
\end{equation*}
We now find $\alpha, \beta$ in $\{ 1, 2, \dots, t+3 \}$ such that $\alpha i \equiv 1 \pmod{n}$ and $\beta i \equiv n-1 \pmod{n}$ so that $(\alpha + \beta)i \equiv 0 \pmod{n}$. 
Assuming $\gcd(i,n) = 1$ this implies $n \mid \alpha + \beta$, which is impossible. If $\gcd(i, n) = 2$, then either $n \mid \alpha + \beta$, or $n \mid 2(\alpha + \beta)$ implying $n = 5t < 4t + 12$, i.e. $t < 12$, and it can be verified computationally that no equivalence is possible in this case except for $t = 1$.

The proof is done.
\end{proof}

We note that $e(4,4) \equiv 29 \pmod{2^5-1}$ and $e(9,2) \equiv 426 \pmod{2^{10}-1}$, and these are the only two cases in which the Dobbertin exponent can be cyclotomic equivalent to $e(l,k)$.

One can attempt to treat the inverse Dobbertin exponent with the same method that we have used for the other families. Using~\cite[Lemma 9]{BCCDK22}, since the inverse Dobbertin exponent $D_t^{-1}$ is in the cyclotomic coset of $\frac{2^t+1}{2^{2t}+2^t+1}$ modulo $2^{5t}-1$, 
it would be sufficient to investigate the congruence
\[
(2^{2t}+2^t+1)e(l,k)\equiv 2^a (2^t+1)\pmod {2^{5t}-1}.
\]
However, applying Lemma~\ref{lemUniqueExpansion} requires a very large number of degenerate cases to be treated (significantly more that even in the proof of Theorem \ref{propKasami_inv}) and would require multiple pages just to write down. On the other hand, the cases that such a proof would handle on top of our other theorems and characterizations is quite modest: Dobbertin exponents only exist for $n$ that are multiples of $5$, while inverses only exist for odd $n$; and so, the only dimensions $n$ that the lack of such a proof would leave untouched are the odd integers $n$ divisible by $5$. In ordet to handle this subset of dimensions from the point of view of searching for new APN monomials in practice, we simply ran some computer experiments using SageMath on an i7 MacOS with 16GB of RAM. For $n\leq 200$ and $k,l\leq n-1$, the only possible value of $t$ for which we can have equivalence is $t=1$, when $D_1=29\equiv -2\pmod {2^5-1}$, so $D_1^{-1} \equiv 15\pmod {2^5-1}=e(1,4)$. We note that the interval $n \le 200$ should all dimensions $n$ where the investigation of APN-ness can be performed computationally using the currently available methods and computational resources.

\section{Computationally testing APN-ness for the 0-APN exponents}
\label{secComputations}
 We ran experiments to check whether the monomials of the form $x^{e(l, i)}$ with $3 \le l \le 9$ and $1 \le k \le 8$ are APN over the field $\F_{2^n}$ for $2 \le n \le 100$. Our results are presented in Table \ref{tab:apn_2_100} where the entries list the dimensions $n$ for which $e(l, k)$ is an APN exponent over $\F_{2^n}$. The experiments were run on a server with around 500 GB of RAM and 55 Intel Xeon E5-2690 CPU's using the \textit{Magma} Computational Algebra System \cite{bosma1997magma}. We tested APN-ness by checking whether the polynomial $F(x) = x^{e(l,i)} + a^{e(l,i)} + (x+a+1)^{e(l,i)} + 1$ has more than two roots for all possible choices of $a \in \F_{2^n} \setminus \{ 0, a \}$. In our table, the empty cells denote experiments on monomials which did not finish due to time or memory constraints. All APN monomials encountered in our experiment were cyclotomic equivalent to representatives from the known families found in Table~\ref{tableMonomials}.
 
 From the table we can observe that the difficulty of testing APN-ness for an exponent $d$ over $\F_{2^n}$ grows not only with the dimension $n$, but also with $d$ itself. Indeed, for small values of $d = e(l,i)$, we were able to test APN-ness for all $n \le 100$, while for values as small as $e(6,6) \approx 10^9$, this was no longer possible. This also illustrates the advantage of using 0-APN monomials as an intermediate step in the search of APN monomials, since it is significantly easier to characterize 0-APN-ness or to test it computationally.
 
 \begin{table}[h]
\footnotesize
    \centering
    \begin{tabular}{|c|c|c|c|c|c|c|c|c|}
    \hline
    \diagbox[]{$l$}{$i$} & 1 & 2 & 3 & 4 & 5 & 6 & 7 & 8 \\
    \hline
    3 & 5 & 2,4,5 & 3,5 & 2,4,5,8 & 5 & 2,3,4,5,6,12 & 5,7 & 2,4,5,8,16 \\
    \hline
    4 & 2,5,7 & 5,7 & 2,5,7 & 5,7 & 2,7 & 5,7 & 2,5 & \\
    \hline
    5 & 3,9 & 3,9 & 3 & 3,9 & 3,5,9 & & & \\
    \hline
    6 & 2,4,7,11 & 2,7,11 & 2,3,4,7,11 & 2,4,7,11 & & & & \\
    \hline
    7 & 5,13 & 5,13 & 5,13 & & & & & \\
    \hline
    8 & 2,3,5,6,9,15 & 3,5,9,15 & 2,5 & & & & & \\
    \hline
    9 & 5,7,17 & 2,4,5,7,10,17 & & & & & & \\
    \hline
    \end{tabular}
    \caption{Dimensions $n = 2$ to $100$ where $x^{e(l, i)}$ is APN over $\F_{2^n}$.}
    \label{tab:apn_2_100}
\end{table}

\section{Conclusion}

We introduced an infinite class of exponents $e(l,k)$ with two parameters $l,k \in \mathbb{N}$, and showed how to easily find infinitely many dimensions $n$ for which $x^{e(l,k)}$ is 0-APN for any choice of $l$ and $k$. We discussed how our theoretical results can be extended with the help of some computations in order to characterize the set of all dimensions $n$ over which $x^{e(l,k)}$ is 0-APN. The introduced class of exponents is significantly more tractable (both from a computational and a mathematical point of view) than in general, and provides a promising set of exponents that may lead to new APN monomials.

Taking advantage of this tractable structure of the exponents $e(l,k)$, we characterized precisely when they are cyclotomic equivalent to the known APN families (except in the case of the inverse of the Dobbertin exponents where the proof using our current methods is too technical, so we provided computational data for $n \le 200$ instead). We observed that the Gold functions, their inverses, and the inverse APN function can all be expressed in the form $e(l,k)$ for suitable choices of $l$ and $k$.

We have also provided some computational data on the APN-ness of this class of exponents and outlined the limits of the available computational equipment when it comes to verifying APN-ness.

We hope that further investigations into the structure and properties of the exponents $e(l,k)$ can provide us with additional conditions that might help us to overcome these technical limitations, and to potentially identify new instances of APN monomials over finite fields of large extension degree.

\section*{Acknowledgements}

We would like to thank the editor and the referees for their helpful comments and advice which has helped us to greatly improve the quality of the paper.
The paper was started during an enjoyable visit of P. S. at the Selmer Center of University of Bergen in Spring of 2022. He would like to thank the institution for the invitation and the excellent working conditions.

\end{document}